\newcommand{\IM}{\mathrm{Im}\mbox{ } }
\newcommand{\Rank}{\mathrm{rank}\mbox{ } }
\newcommand{\SPAN}{\mathrm{Span}}
\newtheorem{Definition}{Definition}
\newtheorem{Theorem}{Theorem}
\newtheorem{Lemma}{Lemma}
\newtheorem{Proposition}{Proposition}
\newtheorem{Corollary}{Corollary}
\newtheorem{Remark}{Remark}
\newtheorem{Notation}{Notation}
\newtheorem{Example}{Example}
\newtheorem{Procedure}{Procedure}
\newcommand{\LSS}{LSS}
\newcommand{\BLSS}{LSS\ }
\newcommand{\SLSS}{LSSs}
\newcommand{\BSLSS}{LSSs\ }
\newcommand{\MORPH}{\mathcal{S}}
\newcommand{\QNUM}{D}
\newcommand{\Z}{\mathrm{Z}}
\newcommand{\K}{\mathrm{K}}
\newcommand{\SwitchSysLin}[1][{}]{ (n^{#1},Q,\{(A_{q}^{#1},B_{q}^{#1},C_{q}^{#1}) \mid q \in Q \})
}
\newcommand{\Q}{\mathscr{Q}}
\newcommand{\CP}{\mathcal{P}}
\begin{document}


\newcommand{\diag}{\mathrm{diag}}



\begin{frontmatter}

\title{Balanced truncation for linear switched systems}
\author[FIRST]{Mih\'aly Petreczky}
\ead{mihaly.petreczky@mines-douai.fr}
\author[SECOND]{Rafael Wisniewski}
\ead{raf@es.aau.dk}
\author[SECOND]{John Leth\corref{cor1}}
\ead{jjl@es.aau.dk}
\cortext[cor1]{Corresponding author}

\address[FIRST]{Univ Lille Nord de France, F-59000 Lille, France, and
  EMDouai, IA, F-59500 Douai, France
}
\address[SECOND]{
Aalborg University, Dept. of Electronic Systems,
Fr. Bajers Vej 7, C3-211, DK-9220 Aalborg Ost, Denmark
}

\begin{abstract}
In this paper, we present a theoretical analysis of the model
reduction algorithm for linear switched systems from
\cite{Shaker1,Shaker2}.  This algorithm is a reminiscence of the
balanced truncation method for linear parameter varying
systems~\cite{WoodGoddardGlover96}.
Specifically in this paper, we provide a bound on the approximation error
in $L_2$ norm for continuous-time and $l_2$ norm for discrete-time linear switched
systems. We provide a system theoretic interpretation of
grammians and their singular values. Furthermore,  we show that the
performance of balanced truncation depends only on the input-output
map and not on the choice of the state-space representation. 
For a class of stable discrete-time linear switched
systems (so called strongly stable systems), we define nice
controllability and nice observability grammians, which are genuinely
related to reachability and controllability of switched systems.  In
addition, we show that quadratic stability and LMI estimates of the
$L_2$ and $l_2$ gains depend only on the input-output map.

\end{abstract}
\begin{keyword}
  switched systems, model reduction, balanced truncation, realization theory.
\end{keyword}

\end{frontmatter}

\section{Introduction}

In this paper, we address certain theoretical problems which arise in
model reduction of continuous- and discrete-time Linear Switched
Systems (in this work, referred to as LSS). 

To describe the contribution of this paper in more details, we will
first present an informal overview of balanced truncation for
linear switched system which appeared in \cite{Shaker1,Shaker2}. In
fact, this method has already been used for linear time varying
systems in \cite{WoodGoddardGlover96}.
  
Consider a linear switched system of the form
\begin{equation}
\label{linswitch0}
\Sigma : \left\{\begin{array}{rcl}
\delta(x)(t)
&=& A_{q(t)} x(t) + B_{q(t)} u(t),~ x(t_0) = x_0 \\
y(t) &=& C_{q(t)} x(t).
\end{array}\right.
\end{equation}
where $\delta(x)(t)=\dot x(t)$ in continuous-time case, 
$\delta(x)(t)=x(t+1)$ in discrete-time case, 
$A_{q}$, $B_q$, $C_q$ are $n \times n$, $n \times m$ and $p
\times n$ matrices respectively, and the switching signal $q$ maps
time instances to discrete states in a set $Q$.

We seek to replace the system $\Sigma$ by another one of smaller
dimension, but which still adequately approximates the input-output
behavior of $\Sigma$. The methods for continuous-time and discrete
time LSSs are similar. Specifically, for continuous-time systems, we
define the following \emph{observability grammian} as any positive
definite $\Q > 0$ such that
\begin{equation}
\label{ContObsGramIntro} 
\forall q \in Q: A^T_q\Q+\Q A_q +C_q^TC_q < 0.
\end{equation}
Likewise, define a \emph{controllability grammian} as a
 positive definite $\CP > 0$ that satisfies
\begin{equation}
\label{ContContrGramIntro}
\forall q \in Q: A_q\CP+\CP A_q^T +B_qB^T_q < 0.
\end{equation}
By applying a suitable state-space isomorphism, the system can be
brought into a form where
$\CP=\Q=\Lambda=\diag(\sigma_1,\ldots,\sigma_n)$ are diagonal matrices
and $\sigma_1 \ge \ldots \ge \sigma_n > 0$.  We will call the numbers
$\sigma_1 \ge \ldots \ge \sigma_n > 0$ the \emph{singular values} of
the pair $(\CP,\Q)$. We also observe that
$\sigma_i=\sqrt{\lambda_i(\CP \Q)}$, where $\lambda_1(\CP\Q) \ge
\ldots \ge \lambda_n(\CP\Q)$ are the ordered eigenvalues of $\CP\Q$.
Following the classical terminology, we will call a state-space
representation \emph{balanced}, if $\CP=\Q=\Lambda$, where $\Lambda$
is a diagonal matrix.  We reduce the dimension of a balanced
state-space representation by discarding the last $n-r$ state-space
components.

Suppose that $\Sigma$ is balanced.  Consequently, the system matrices
$\hat{A}_{q}, \hat{B}_q,\hat{C}_q$ of the reduced order system
$\hat{\Sigma}$ are obtained by partitioning the matrices of the
original system as follows
\begin{equation*}
{A}_q=\begin{bmatrix}
\hat{A}_q & \star \\
\star & \star
\end{bmatrix} \mbox{, \ \ } {B}_q = \begin{bmatrix} \hat{B}_q \\
\star
\end{bmatrix} \mbox{, \ \ \ } {C}_q^T = \begin{bmatrix} \hat{C}_q^T, &
\star \end{bmatrix},
\end{equation*}
where $\hat{A}_q,\hat{B}_q,\hat{C}_q$ are $r \times r$, $r \times m$
and $p \times r$ matrices respectively. The performance of this
procedure has been extensively tested by means of numerical examples
in \cite{Shaker1,Shaker2}.  However, many theoretical questions remain
open.

\textbf{Problem formulation} \\
We strive to find error bounds and to establish the invariance of
the method with respect to state-space representation. In particular,
we seek answers to the following questions:
\begin{enumerate}
\item
\label{Problem1}
\textbf{Error bounds} \\
Can we state a bound on the error between the original system $\Sigma$
and the reduced one $\hat{\Sigma}$, using some metric~? In particular,
can we extend the well-known result from the linear case, by proving
that
\begin{equation}
\label{error:bound} 
||Y^{\Sigma}-Y^{\hat{\Sigma}}||_{2} \le 2(\sigma_{r+1}+\cdots+\sigma_{n}),
\end{equation}
where $Y^{\Sigma}$ and $Y^{\hat{\Sigma}}$ are the input-output maps of
$\Sigma$ and $\hat{\Sigma}$ respectively, and $||.||_{2}$ denotes the
$L_2$ norm of the switched system as defined in \cite{HespanhaNov03} ?

\item
\label{Problem4}
\textbf{Invariance of the $L_2$ (for continuous-time) and $l_2$ (for
  discrete-time) norm estimates on state-space representation} \\
Is it possible to estimate the system norm in a manner which does not
depend on the choice of the basis of the state-space ?

\item
\label{Problem2}
\textbf{Invariance of the grammians on state-space representation}\\
Under which conditions the controllability and observability
inequalities \eqref{ContObsGramIntro} and \eqref{ContContrGramIntro}
and their discrete counterparts have solutions ?  Does the existence
of a solution to these inequalities depend on the choice of the
state-space representation ? Can we characterize the set of
observability/controllability grammians in a way which does not depend
on the choice of the state-space representation but only on the
input-output map ?

\item
\label{Problem3}
\textbf{Invariance of the singular values on state-space representation} \\
Do the singular values of the system (i.e., the values
$\sigma_1,\ldots,\sigma_n$) depend only on the input-output map of the
system or do they also depend on the choice of the state-space
representation. Do they have a system theoretic interpretation ?

\item
\label{Problem5}
\textbf{System theoretic interpretation of the grammians} \\
What is the relationship between grammians and
observability~/~controllability of the switched systems.
In the linear case, existence of strictly positive
observability/controllability grammians implies
observability/controllability of the system. Does this extend to the
switched case ?

\item
\label{Problem7}
\textbf{Preservation of system theoretic properties by
  the reduced system} \\
If the original system was reachable, observable, minimal, stable,
etc., then will balanced truncation preserve these properties ?
       
\end{enumerate}
The motivation for the first problem is clear.  The motivation for
questions \ref{Problem4}--\ref{Problem3} is the following. The
formulation of balanced truncation does not a priory exclude the
possibility that the choice of the state-space representation might
influence the error bound.  This would inhibit the applicability of
the method, since the choice of the initial state-space representation
is often circumstantial.  In particular, the existence of a solution
to the LMIs that are used for estimating $L_2$ and $l_2$ gains might
depend on the choice of the state-space representation.  In a similar
manner, the existence of grammians and the values of the corresponding
singular values might also depend on the state-space representation.
Carrying on, Question \ref{Problem5} is important for obtaining a
deeper theoretical insight and for answering Question \ref{Problem7}.
Whereas, Question \ref{Problem7} is important, because the reduced
system is to be used for control design, which is easier if
certain important system-theoretic properties remain valid.

\textbf{Contribution of the paper}

In this paper, we prove the following results:
\begin{itemize}
\item We prove the error-bound \eqref{error:bound} for continuous and
discrete LSSs.

\item We show how to estimate the $L_2$ and $l_2$ norms using LMIs in
such a way that the obtained estimates do not depend on the choice of
the state-space representation.

\item If a system admits an observability (controllability) grammian,
then any minimal linear switched system which describes the same
input-output map will admit an observability (controllability)
grammian.

There is a one-to-one correspondence between controllability
(resp. observability) grammians of minimal systems which describe the
same input-output map. This correspondence preserves the singular
values.  That is, the existence of grammians is a property of
  the input-output map and not of the state-space representation.
For minimal state-space representations, the singular values are
  functions of the input-output map and not of the state-space
  representation.  We also relate the largest singular value to the
Hankel-norm of the system.

As a byproduct, we also show that if an input-output map can be
  realized by a quadratically stable system\footnote{i.e. a system
    with a common quadratic Lyapunov function}, then any minimal
  realization of this map will be quadratically stable.

\item For minimal systems, if controllability and observability
grammians exist, then they are necessarily strictly positive definite. 

For a class of discrete-time LSS, so called strongly stable
LSSs, for which the matrix $\sum_{q \in Q} A_q^T \otimes A^T_q$ has all
its eigenvalues inside the open unit disc, the converse result also holds. Specifically, if nice controllability and observability grammians are positive definite then the system is minimal. 

\item Balanced truncation preserves quadratic stability. However, it
does not necessarily preserve minimality. The fact that balanced
truncation does not preserve minimality is a further indication of
that the method might be very conservative.

\end{itemize}

\textbf{Related work}
The current paper is an extension of \cite{ADHS2012}. With respect to
this work, the main differences are: 
\begin{enumerate}
\item The current paper contains detailed proofs.
\item It presents results for both the continuous- and discrete-time LSSs.
\item It provides a detailed exposition of the comparison between
balanced truncation of switched systems and balanced truncation of
linear parameter~-~varying systems and systems with structured
uncertainty.
\end{enumerate}

A rich literature covers the subject of model reduction for switched
systems, \cite{French1,Zhang20082944, Mazzi1,
  Habets1,China2,China3,Lam1,Kotsalis2,Kotsalis1,Kotsalis,6209392}.  
In particular, balanced
truncation was explored in
\cite{Shaker1,Shaker2,Kotsalis1,Kotsalis2,Kotsalis,Lam1,6209392}. The procedure dealt
with in this paper was already described in
\cite{Shaker1,Shaker2}. The error bounds of balanced truncation were
investigated in~\cite{WoodGoddardGlover96} for linear parameter
varying systems, in \cite{BeckThesis} for uncertain systems, and
in~\cite{Kotsalis1} for discrete-time jump linear stochastic systems.
The proofs of the error bound in this paper are original and are
inspired by similar proofs for time-varying systems
\cite{SandbergThesis}; yet, by introducing certain constructions, we
show that the results in this paper can be related to the above works.
We will present a detailed comparison between the contribution of this
paper and \cite{WoodGoddardGlover96,BeckThesis,Kotsalis1} in \S
\ref{RelationSection}. In short, the main differences are as
follows. First, this paper deals explicitly with linear switched
systems and does so for both discrete-time and continuous-time.
As a consequence, the obtained results are less conservative than the
ones of \cite{WoodGoddardGlover96,BeckThesis,Kotsalis1} when applied
to linear switched systems.  Second, we address in great detail the
independence of balanced truncation from the choice of state-space
representation.

The induced $L_2$ norm for switched systems was introduced and
analyzed in
\cite{HespanhaNov03,MargaliotHespanhaFeb07,HirataHespanhaDec09}. Nonetheless,
the focus in those works was not on the invariance of the computed
estimates with respect to the choice of the state-space
representation, addressed in this paper.

Equivalent formulations of the concept of strong stability were used in 
\cite{PetreczkyV07,MP:MTNSSpaces}, but in a completely different setting.
In \cite{MP:MTNSSpaces} it was also stated that minimization preserves
strong stability, but no proof was provided.


We believe that the paper, as a whole, represents a new contribution,
although some of the results of this paper have appeared in other
contexts in literature.  It provides a coherent exposition of the
theoretical aspects of balanced truncation, and it does it in a
self-contained manner. The presentation is tailored to linear switched
systems.  Finally, we believe that the relationship between the
approach presented in the current work and balanced truncation methods
of \cite{WoodGoddardGlover96,BeckThesis,Kotsalis1} is also interesting
on its own right, since the corresponding system classes are rather
far from linear switched systems.

\textbf{Outline:} In \S \ref{sect:sys}, we present the formal definition of continuous-
and discrete-time LSSs. Furthermore, we give a brief overview of
realization theory of LSSs.  In \S \ref{sect:stab}, we state the
formal definition of $L_2$ and $l_2$ norms, and
grammians. Subsequently, we show the relationship between these
concepts and conditions which guarantee their existence. In \S
\ref{sect:inv}, we show that quadratic stability, estimates of the
$L_2$ and $l_2$ norms, existence of grammians and the singular values
of the system are all independent on the state-space representation.
In \S \ref{sys:interp}, we discuss the system theoretic interpretation
of grammians and their singular values. For the class of strongly
stable discrete-time LSS, we show that the balanced truncation
conserves strong stability and minimality of the representation.  In
\S \ref{sect:model_red}, we present the proof of the error bound for
balanced truncation. Finally in \S \ref{RelationSection}, we show how our
results are related to the results on balanced truncation of
uncertain systems, linear parameter varying systems and discrete-time
Markov jump linear systems.

\textbf{Notation:} The cardinality of a set $S$ is denoted $|S|$. By
$\mathbb{N}$, we denote the set of natural numbers including $0$, and
by $T=\mathbb R_{+}$ the set of non negative reals.  We denote by
$||x||_{2}$ the Euclidean norm of a vector $x \in \mathbb{R}^{n}$. We
denote by $\mathbb{R}^{k \times l}$ the set of all $k \times l$
matrices with real entries. For a symmetric matrix $A \in
\mathbb{R}^{n \times n}$, we write $A>0$, $A<0$, $A\ge 0$ and $A \le
0$ if $A$ is strictly positive definite, strictly negative definite,
positive semi-definite and negative semi-definite, respectively.  By
$\diag(a_1,a_2,\ldots,a_n)$, we denote the $n \times n$ diagonal
matrix with diagonal entries $a_1,\ldots,a_n \in \mathbb{R}$.
Similarly, we denote by $\diag(A_1,\ldots,A_k)$ the $n \times n$
block-diagonal matrix, such that $A_i \in \mathbb{R}^{n_i \times n_i}$
and $n=\sum_{i=1}^{k} n_i$.

We use the standard notation of automata theory \cite{AutoEilen}.  For
a set $X$, called the \emph{alphabet}, we denote by $X^{*}$ the set of
finite strings of elements of $X$, and by $X^{\omega}$ the set of
infinite strings of elements of $X$. Elements of $X^{*}$ and
$X^{\omega}$ are called finite and infinite words, respectively. 
That is, a typical element of $X^{*}$ is a finite sequence
$w_0\cdots w_k$, $w_0,\ldots,w_k \in X$
and a typical element of $X^{\omega}$ is an infinite
sequence $w_0w_1\cdots $, such that $w_0,w_1,\ldots \in X$.

 For a
finite word $w=w_0w_1\cdots w_k \in X^*$, its length is denoted by
$|w|$ and is equal $k+1$.  We denote by $\epsilon$ the empty sequence
(word) in $X^*$. In addition, we define $X^{+}=X^{*}\setminus
\{\epsilon\}$.

We say that a map $f:T \rightarrow \mathbb{R}^{n}$ is
\emph{piecewise-continuous}, if $f$ has finitely many points of
discontinuity on any compact subinterval of $T$, and at any point of
discontinuity the left-hand and right-hand side limits of $f$ exist
and are finite.  We denote by $PC(T,\mathbb{R}^n)$ the set of all
piecewise-continuous functions of the above form. The notation $AC(T,
\mathbb R^n)$ designates the set of all absolutely continuous maps $f:
T \to \mathbb R^n$. We denote by $L_2=L_{2}(T, \mathbb R^n)$ the set
of all Lebesgue measurable maps $f:T \rightarrow \mathbb{R}^{n}$ for
which $\int_{0}^{\infty} ||f(s)||_{2}^{2} ds < \infty$.  For $f \in
L_2$, we denote by $||f||_{2}$ the standard norm of $f$, i.e.,
$||f||_2=\sqrt{\int_{0}^{\infty} ||f(s)||_{2}^{2} ds}$. Likewise, we
denote by $l_2=l_2(\mathbb R^n)=l_2(\mathbb{N}, \mathbb R^n)$ the set
of all sequences $x=(x_0,x_1,\dots)$ in $\mathbb{R}^n$ with bounded
$l_2$ norm, i.e., $||x||_2=\sqrt{\sum_{i=0}^{\infty}
  ||x_i||_{2}^{2}}<\infty$.  Finally, if $M$ is a $k \times l$ matrix,
then $||M||_{F}$ denotes the Frobenius norm of $M$, i.e.,
$||M||_F=\sqrt{\sum_{i=1}^{k}\sum_{j=1}^{l} M_{i,j}^2}$.  For $M$ a
square matrix, i.e., $k=l$, we denote by $\mathrm{tr}(M)$ the trace of
$M$, $\mathrm{tr}(M)=\sum_{i=1}^{k} M_{i,i}$.

If $z:\mathbb{N} \rightarrow \mathbb{R}^r$ for some $r$, then denote
by $\delta(z)$ the forward shift operator $\delta(z)(t)=z(t+1)$,
$\forall t \in \mathbb{N}$.  If $z \in AC(T,\mathbb{R}^{r})$, then
denote by $\delta(z)$ the derivative operator,
i.e. $\delta(z)(t)=\frac{dz}{dt}(t)$.

\section{Linear switched systems}
\label{sect:sys}
Below, we formulate the definition of (continuous and discrete) linear
switched systems and their system theoretic properties. The
presentation is based on \cite{MP:BigArticle,MP:Phd}.

\begin{Definition}[Linear switched systems]
A \emph{linear switched system} with external switching (abbreviated
as \LSS) is a tuple
\[ \Sigma = \SwitchSysLin, \] where $Q=\{1,\ldots,\QNUM\}$ for some
fixed $\QNUM \in \mathbb{N}\setminus\{0\}$, and for each $q \in Q$,
$(A_q, B_q, C_q) \in \mathbb{R}^{n \times n} \times \mathbb{R}^{n
  \times m} \times \mathbb{R}^{p \times n}$. The number
$n\in\mathbb{N}$, sometimes denoted $\dim \Sigma$, is called the
\emph{dimension} of the \BLSS $\Sigma$. The elements of the set $Q$
will be called the discrete modes, and $Q$ will be called the set of
discrete modes.
\end{Definition}

The phrase ``external switching'' will be explained after we have
introduced the notion of a solution.
 
In the sequel, we use the following notation and terminology: the
state space $X = \mathbb R^n$, the output space $Y = \mathbb R^p$, and
the input space $U = \mathbb R^m$. Moreover, to unify notation, we
write $\mathcal{U}$, $\mathcal{Q}$, $\mathcal{X}$ and $\mathcal{Y}$ to
denote either $L_2(T,U)$, $PC(T,Q)$, $AC(T,X)$ and $PC(T,Y)$ (in
continuous-time) or 
%
%
%
$l_2(U)$, $Q^{\omega}$, $X^{\omega}$ and $Y^{\omega}$ (in discrete
time).  

Occasionally, we write $q(t)$ for the $t$th element $q_t$ of a
sequence $q\in Q^{\omega}$. The same comment applies to the elements of $U^{\omega}$,
$X^{\omega}$ and $Y^{\omega}$.

With the conventions just stated, we collectively denote by
$||\cdot||_2$ either the $l_2$ norm (in discrete-time) or the $L_2$
norm (in continuous-time).


\begin{Definition}[Solution]
A \emph{solution} of the switched system (with external switching)
$\Sigma$ with initial state $x_0 \in X$ and relative to the pair
$(u,q) \in \mathcal{U}\times\mathcal{Q}$ is by definition a pair $(x, y) \in
\mathcal{X}\times\mathcal{Y}$ satisfying
\begin{equation}\label{eq:cs}
\begin{split}
\delta x(t) 
&= A_{q(t)} x(t) + B_{q(t)} u(t)~\text{a.e},~ x(0) = x_0 \\
y(t) &= C_{q(t)} x(t),
\end{split}
\end{equation}
with a.e meaning almost everywhere and $\delta$ denoting the
derivative operator in continuous-time, and the forward shift operator
in discrete-time.
\end{Definition}

In both continuous and discrete-time, we shall call $u$ the control
input, $q$ the switching signal, $x$ the state trajectory, and $y$ the
output trajectory.

Note that the pair $(u,q)\in\mathcal{U}\times\mathcal{Q}$ can be considered as an
input to the \LSS. The phrase ``external input'' in the definition of
an \BLSS refers to the fact that $(u,q)$ can be chosen
externally. Contrary to the situation when $q$ is state-dependent and
the value of $q$ is assigned internally; for instance, when the state
space is partitioned and a specific value of $q$ is assigned for each
cell of the partition.

In the sequel, we only specify whether we consider continuous or
discrete-time when this is not clear from the context. Remarkably,
the particular case is frequently immaterial for reaching our
conclusions.

\begin{Definition}[Input-state and input-output maps]
The \emph{input}-\emph{state} map $X_{x_0}^{\Sigma}$ and
\emph{input-output} map $Y_{x_0}^{\Sigma}$ for the \BLSS $\Sigma$,
induced by the initial state $x_0 \in X$, are the maps
\begin{align*}
\mathcal{U}\times\mathcal{Q}\to\mathcal{X};~
(u,q)\mapsto X_{x_0}^{\Sigma}(u,q)=x,\\
\mathcal{U}\times\mathcal{Q}\to\mathcal{Y};~ (u,q)\mapsto Y_{x_0}^{\Sigma}(u,q)=y,
\end{align*}
where $(x,y)$ is the solution of $\Sigma$ at $x_0$ relative to
$(u,q)$.
\end{Definition}

A natural question is when a map $f$ of the form
\begin{align}\label{iomap}
f:~\mathcal{U}\times\mathcal{Q}\to\mathcal{Y}
\end{align}
is indeed realized by an LSS. In the sequel, we refer to a map
$\eqref{iomap}$ as an input-output map.
To address the above problem, we will fix a designated initial state
for \SLSS. Since we will mostly deal with exponentially stable \SLSS,
we will set the initial state to be zero.\label{zero} While this choice seems
natural for the current paper, other choices might be more appropriate
in other circumstances.  Nonetheless, many of the results of this
paper can be extended to the case of non-zero initial conditions.
\begin{Definition}[Realization and equivalence]
\label{def:real}
The \emph{input}-\emph{output} \emph{map} $Y^{\Sigma}$ of an \BLSS
$\Sigma$ is the input-output map $Y^{\Sigma} = Y^{\Sigma}_{0}$ induced
by the zero initial state.  The \BLSS $\Sigma$ is said to be a
\emph{realization} of an input-output map $f$ of the form
\eqref{iomap}, if $Y^{\Sigma}=f$. Moreover, the \BSLSS $\Sigma_1$ and
$\Sigma_2$ are \emph{equivalent} if $Y^{\Sigma_1}=Y^{\Sigma_2}$.
\end{Definition}

It is clear that any \BLSS is a realization of its own input-output
map induced by the zero initial state.

\begin{Definition}[Minimality]
The \BLSS $\Sigma_{\mathrm m}$ is said to be a \emph{minimal}
realization of an input-output map $f$, if $\Sigma_{\mathrm m}$ is a
realization of $f$ and if for any other \BLSS $\Sigma$ which is a
realization of $f$, $\dim \Sigma_{\mathrm m} \le \dim \Sigma$.  We say
that $\Sigma_{\mathrm m}$ is a minimal \BLSS, if it is a minimal
realization of its input-output map $Y^{\Sigma_{\mathrm m}}$.
\end{Definition}
\begin{Definition}[Observability]
\label{sect:switch_sys:obs:def}
An \BLSS $\Sigma$ is said to be \emph{observable}, if for any two
states $x_{1}\neq x_{2} \in X$, the input-output maps induced by $x_1$
and $x_2$ are different, i.e., $Y^{\Sigma}_{x_1} \ne
Y^{\Sigma}_{x_2}$.
\end{Definition}

Let $Reach_{x_0}(\Sigma)\subseteq X$ denote the reachable set of the
\BLSS $\Sigma$ relative to the initial condition $x_0\in X$, i.e.,
$Reach_{x_0}(\Sigma)$ is the image of the map $(u,q,t)\mapsto
X^{\Sigma}_{x_0}(u,q)(t)$.

\begin{Definition}[(Span-)Reachability]
\label{sect:switch_sys:reach:def}
The \BLSS $\Sigma$ is said to be \emph{reachable} if every state is
reachable from the zero initial state, i.e., if
$Reach_{0}(\Sigma)=X$. The \BLSS $\Sigma$ is \emph{span-reachable} if
$X$ is the smallest vector space containing $Reach(\Sigma)$.
\end{Definition}

We note that span-reachability and reachability are the same in
continuous-time.

As we now recall (in Theorem~\ref{Theo:min} below), there is a strong
relation between minimality on one side and span-reachability and
observability on the other.

\begin{Definition}{(Isomorphism)}
\label{sect:problem_form:lin:morphism}
Two \BLSS
\[\Sigma_1 = \SwitchSysLin,\]
and
\[ \Sigma_{2}=(n, Q, \{(A_q^a,B_{q}^a,C_q^a) \mid q \in Q\}) \] are
said to be \emph{isomorphic} if there exists a non-singular matrix $S
\in \mathbb{R}^{n \times n}$ such that
\begin{equation*}
\forall q \in Q: 
A^{a}_{q}S=SA_{q},  B_{q}^{a}=SB_{q}, 
C_{q}^{a}S=C_{q}.
\end{equation*}
The matrix $S$ is said to be an \emph{isomorphism} from $\Sigma_{1}$
to $\Sigma_{2}$ and is denoted by $S:\Sigma_{1} \rightarrow
\Sigma_{2}$.
\end{Definition}

The following theorem summaries various results on minimality, see
\cite{MPLBJH:Real,MP:RealForm,MP:BigArticle}.

\begin{Theorem}[Minimality]
\label{Theo:min}
A \BLSS realization is minimal, if and only if it is span-reachable
and observable. All minimal \BLSS realizations of an input-output map
are isomorphic. Finally, if $\Sigma_1$ and $\Sigma_2$ are two
equivalent and minimal \SLSS, then they are related by an \LSS\
isomorphism.
\end{Theorem}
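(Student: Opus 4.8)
The plan is to reduce all three assertions to the realization theory of \SLSS via the \emph{Markov parameters} of the input-output map. First I would record the fact that, for an \BLSS $\Sigma = \SwitchSysLin$ started from the zero state, the output $Y^{\Sigma}(u,q)(t)$ is --- in the discrete-time case a finite sum, in the continuous-time case an iterated integral --- whose only $\Sigma$-dependent data are the matrices $C_{q_0}A_{q_1}\cdots A_{q_{k-1}}B_{q_k}$, the word $q_0 q_1\cdots q_k$ ranging over $Q^{+}$ with $k\ge 1$. Consequently two \SLSS are equivalent exactly when they share this family of Markov parameters, and a map of the form \eqref{iomap} is realizable exactly when its generalized kernel is generated by such a family. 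I would take this dictionary as given, citing \cite{MPLBJH:Real,MP:RealForm,MP:BigArticle}; in continuous time the non-trivial point inside these references is that distinct Markov parameters force distinct input-output maps, which is shown by testing on piecewise-constant switching signals with vanishing dwell times together with analyticity of the flow.

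Second I would translate span-reachability and observability into linear algebra. Let $\mathcal R(\Sigma)\subseteq X$ be the span of all vectors $A_{q_1}\cdots A_{q_k}B_{q_0}v$ ($k\ge 0$, $v\in U$) and let $\mathcal O(\Sigma)\subseteq X$ be the intersection of all $\ker(C_{q_0}A_{q_1}\cdots A_{q_k})$ ($k\ge 0$); then $\Sigma$ is span-reachable iff $\mathcal R(\Sigma)=X$ (which in continuous time coincides with reachability, as already noted) and observable iff $\mathcal O(\Sigma)=\{0\}$. Both subspaces are invariant under every $A_q$, $\mathcal R(\Sigma)$ contains $\IM B_q$ for all $q$, and $\mathcal O(\Sigma)\subseteq \ker C_q$ for all $q$. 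Hence, if $\Sigma$ fails to be span-reachable, restricting the $A_q$, corestricting the $B_q$ and restricting the $C_q$ to $\mathcal R(\Sigma)$ produces a realization of the same map of strictly smaller dimension; if $\Sigma$ fails to be observable, passing to $X/\mathcal O(\Sigma)$ does the same. This establishes the direction ``minimal $\Rightarrow$ span-reachable and observable''.

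Third, for the remaining direction and the isomorphism statements I would use the (in general infinite) reachability matrix $R_{\Sigma}$ whose columns are the generators of $\mathcal R(\Sigma)$ and the observability matrix $O_{\Sigma}$ whose rows are the functionals defining $\mathcal O(\Sigma)$; the products $H := O_{\Sigma}R_{\Sigma}$ and $O_{\Sigma}A_q R_{\Sigma}$ are, by the dictionary of the first paragraph, assembled from Markov parameters and hence depend only on the realized map $f$. If $\Sigma_1$ is span-reachable and observable then $R_{\Sigma_1}$ is surjective onto $X_1$ and $O_{\Sigma_1}$ is injective, so $\dim\Sigma_1=\Rank H$; as $\Rank H\le\dim\Sigma_2$ for every realization $\Sigma_2$ of $f$, $\Sigma_1$ is minimal, and in particular every minimal realization is span-reachable and observable. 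Given two minimal realizations $\Sigma_1$ and $\Sigma_2$ of $f$ (with matrices $A_q,B_q,C_q$ resp. $A_q^a,B_q^a,C_q^a$), surjectivity of $R_{\Sigma_1}$ lets me set $Sx := R_{\Sigma_2}\xi$ for any $\xi$ with $x = R_{\Sigma_1}\xi$; from $O_{\Sigma_1}R_{\Sigma_1}=O_{\Sigma_2}R_{\Sigma_2}=H$ and injectivity of $O_{\Sigma_2}$ one gets $\ker R_{\Sigma_1}\subseteq\ker R_{\Sigma_2}$, so $S$ is well defined, and by symmetry $S$ is bijective. Then $S R_{\Sigma_1}=R_{\Sigma_2}$ and $O_{\Sigma_2}S=O_{\Sigma_1}$ hold by construction, and feeding these into $O_{\Sigma_2}S A_q R_{\Sigma_1}=O_{\Sigma_1}A_q R_{\Sigma_1}=O_{\Sigma_2}A_q^a R_{\Sigma_2}=O_{\Sigma_2}A_q^a S R_{\Sigma_1}$ (using injectivity of $O_{\Sigma_2}$ and surjectivity of $R_{\Sigma_1}$) gives $S A_q = A_q^a S$; reading off the length-one row/column blocks of $O_{\Sigma_2}S=O_{\Sigma_1}$ and $SR_{\Sigma_1}=R_{\Sigma_2}$ gives $C_q^a S = C_q$ and $S B_q = B_q^a$. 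Thus $S$ is an \BLSS isomorphism in the sense of Definition~\ref{sect:problem_form:lin:morphism}, which proves both that all minimal realizations of $f$ are isomorphic and that equivalent minimal \SLSS are related by an \LSS\ isomorphism.

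The step I expect to be the genuine obstacle is the one packaged into the first paragraph: making the Markov-parameter encoding of $Y^{\Sigma}$ rigorous uniformly in both time settings --- essentially immediate in discrete time, but in continuous time requiring a careful separation argument --- and, intertwined with it, checking that it is \emph{span}-reachability (not reachability) that the construction in the second paragraph actually uses, since $\mathcal R(\Sigma)$ need not be reached by a single input in discrete time. As these facts are proved in \cite{MPLBJH:Real,MP:RealForm,MP:BigArticle}, I would invoke them rather than reprove them, and concentrate the original part of the argument on the Hankel-matrix bookkeeping above.
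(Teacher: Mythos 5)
The paper offers no proof of Theorem~\ref{Theo:min} itself; it simply cites \cite{MPLBJH:Real,MP:RealForm,MP:BigArticle}, and your reconstruction --- Markov parameters determining equivalence, reduction to $\IM \mathcal{R}(\Sigma)$ and $X/\ker\mathcal{O}(\Sigma)$ for ``minimal $\Rightarrow$ span-reachable and observable'', and the Hankel-matrix factorization/rank argument with the intertwining map $S$ for the converse and for uniqueness up to isomorphism --- is exactly the standard argument of those references. The proof is correct as sketched, with the only externally borrowed ingredient (the continuous-time Markov-parameter dictionary) appropriately attributed to the same sources the paper relies on.
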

\begin{Remark}
 In \cite{MP:BigArticle,MPLBJH:Real,MP:RealForm} a slightly different
 definition of input-output maps was used. There, an input-output map
 $Y_{\Sigma,x_0}$ of $\Sigma$ induced by the initial state $x_0$ was a map of the form
 $Y_{\Sigma,x_0}:PC(T,U) \times (Q \times T)^{+} \rightarrow Y$ for
 the continuous-time case and $Y_{\Sigma,x_0}:(U \times Q)^{+} \rightarrow Y$ 
 for the discrete-time case. 
 The relationship between $Y_{\Sigma,x_0}$ and $Y^{\Sigma}_{x_0}$  is as follows.

 For the continuous-time case, consider a sequence $w=(q_1,t_1)\cdots (q_k,t_k) \in (Q \times T)^{+}$.
 The interpretation of $w$ is as follows: the discrete mode $q_i$ is active for duration
 $t_i$. 
 Assume that $w$ has the property that
 there exists no $i=1,\ldots,k-1$ such that $t_i=t_{i+1}=0$.
 It is clear from \cite{MP:BigArticle,MP:RealForm} that $Y_{\Sigma,x_0}$ is
 uniquely determined by its restriction to the switching sequences satisfying this property.
 Any such switching sequence can be 
 interpreted as a restriction of a signal $q \in \mathcal{Q}$ to the interval 
 $[0,\sum_{i=1}^{k} t_i]$ such that
 $q|_{(\sum_{j=1}^{i-1} t_{j},\sum_{j=1}^{i} t_j]}=q_i$, $q(0)=q_0$, $i=1,\ldots,k$.
 Conversely, for any signal $q \in Q$, the restriction of $q$ to a finite time interval
 can be interpreted as such a switching sequence.
 From the definition of $Y_{\Sigma,x_0}$ presented in
 \cite{MP:BigArticle,MP:RealForm}, it follows that $Y_{\Sigma,x_0}(u,w)$ depends only
 on the restriction of $u$ to 
 $[0,\sum_{i=1}^{k} t_i]$. It is easy to see that for any $u \in PC(T,U)$, there exists
 a $\widetilde{u} \in \mathcal{U}$ such that on $[0,\sum_{i=1}^{k} t_i]$ $u$ and
 $\widetilde{u}$ coincide.
As a consequence, $Y_{\Sigma,x_0}(u,w)=Y^{\Sigma}_{x_0}(\widetilde{u},q)(\sum_{i=1}^{k} t_i)$.
 Hence, there is a one-to-one correspondence between $Y_{\Sigma,x_0}$ and
 $Y^{\Sigma}_{x_0}$.

 For the discrete-time case, 
 $Y_{\Sigma,x_0}((u_0,q_0)\cdots (u_t,q_t))=Y^{\Sigma}_{x_0}(\widetilde{u},q)(t)$, 
 where $(\widetilde{u},v)$ is any element of $\mathcal{U} \times \mathcal{Q}$ such that
 $\widetilde{u}(i)=u_i$ and $v_i=q_i$ for all $i=0,\ldots, t$. Since 
 any element of $(U \times Q)^{+}$ arises from some element of
 $\mathcal{U} \times Q$ in this way,
 it follows that
 there is one-to-one correspondence between $Y_{\Sigma,x_0}$ and $Y^{\Sigma}_{x_0}$.
 
 It then follows that two \BSLSS $\Sigma_1$ and $\Sigma_2$ are equivalent according to
 Definition \ref{def:real} if and only if $Y_{\Sigma_1,0}=Y_{\Sigma_2,0}$, i.e. 
 $\Sigma_1$ and $\Sigma_2$ realize the same input-output map according to
 \cite{MP:BigArticle,MPLBJH:Real,MP:RealForm}. From the discussion above, it is also
 easy to see that the definitions of observability and span-reachability and minimality
 presented above coincide with those in \cite{MP:BigArticle,MPLBJH:Real,MP:RealForm}.
 This means that the results of \cite{MP:BigArticle,MPLBJH:Real,MP:RealForm}  
 on minimal realization can indeed be used, despite the slight difference in the
 definition of input-output maps.
\end{Remark}
\begin{Remark}
Note that it is also possible to extend the notion of a Hankel-matrix
to switched systems (see \S \ref{SS:SV}) and show that existence of a
realization is equivalent to the finiteness of the rank of the
Hankel-matrix. In fact, the rank of the Hankel-matrix will give the
dimension of a minimal realization. For the purposes of this paper
these results are not directly relevant, the interested reader is
referred to \cite{MP:BigArticle,MPLBJH:Real,MP:RealForm}.
\end{Remark}
Observability and span-reachability of an \BLSS can be characterized by
linear-algebraic conditions.  To present these conditions, we
introduce the following notation.
\begin{Notation}
Consider an \BLSS $\Sigma = \SwitchSysLin$.  For a sequence $q \in
Q^{*}$, we write
\[ A_{q} = \begin{cases} I_{n} & \hbox{ if } q=\epsilon,\\
A_{q_k} \cdots A_{q_2} A_{q_1} & \hbox{ if } q=q_1 q_2 \cdots q_{k},
\end{cases}
\]
where $I_{n}$ denotes the $n \times n$ identity matrix; and recall
that $\epsilon$ is the empty sequence.
\end{Notation}

Let $Q_n^*$ be the set of all words $w \in Q^*$ of length at most $n$,
\[ Q_n^* = \{w \in Q^{*} \mid |w| \le n\}.\] Furthermore, denote by
$M$ the cardinality of $Q_n^*$ ($M = |Q_n^*|$). Fix an ordering
$\{v_1,\ldots,v_M\}$ of the set $Q_n^*$. The next theorem is due to
\cite{Sun:Book,MP:Phd,MPLBJH:Real}.
  
\begin{Theorem}
\label{sect:problem_form:reachobs:prop1}
\label{sect:real:lemma1}~\\
\textbf{Span-Reachability}: The \BLSS $\Sigma$ is span-reachable if
and only if \( \Rank \mathcal{R}(\Sigma)=n, \) where
\begin{equation*}
\begin{split}
& \mathcal{R}(\Sigma)=\begin{bmatrix} A_{v_1}\widetilde{B}, &
A_{v_2}\widetilde{B}, & \ldots, & A_{v_M}\widetilde{B}
\end{bmatrix} \in \mathbb{R}^{n \times m\QNUM M}
\end{split}
\end{equation*}
with $\widetilde{B}=\begin{bmatrix} B_1, & B_2, & \ldots,&
B_{\QNUM} \end{bmatrix} \in \mathbb{R}^{n \times
  \QNUM m}$.\\
      
\textbf{Observability}: The LSS $\Sigma$ is observable if and only if
\( \Rank \mathcal{O}(\Sigma)=n, \) where
\[ \mathcal{O}(\Sigma)=\begin{bmatrix} (\widetilde{C}A_{v_1})^{T}, &
(\widetilde{C}A_{v_2})^T, & \ldots, &
(\widetilde{C}A_{v_M})^T \end{bmatrix}^{T} \in \mathbb{R}^{p\QNUM M
  \times n}.
\]
where $\widetilde{C}=\begin{bmatrix} C_1^T & C_2^T, & \ldots, &
C_{\QNUM}^T \end{bmatrix}^T \in \mathbb{R}^{p\QNUM \times n}$.
\end{Theorem}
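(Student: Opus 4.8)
The plan is to prove each rank condition in two steps: first give a purely algebraic description of the relevant subspace of the state space $X=\mathbb{R}^n$, and then use a Cayley--Hamilton--type argument to see that restricting word lengths to at most $n$ does not change that subspace. For span-reachability, the first step is to show
\[
\SPAN(Reach_0(\Sigma)) \;=\; \SPAN\{\, A_w B_q v \mid w\in Q^*,\ q\in Q,\ v\in\mathbb{R}^m\,\}.
\]
In discrete time this is immediate from the closed form of the solution of \eqref{eq:cs} with $x(0)=0$: one checks by induction that $x(t)=\sum_{j=0}^{t-1}A_{q(j+1)\cdots q(t-1)}B_{q(j)}u(j)$, so every reachable state lies in the right-hand side, and conversely each column of $A_wB_q$ is itself reachable --- take the switching signal that spells $q$ followed by $w$ and apply a single unit input pulse at time $0$. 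In continuous time the identity still holds, but the inclusion ``$\supseteq$'' is no longer a finite computation: using the variation-of-constants formula, the state reached under a piecewise-constant switching signal is a finite sum of iterated integrals of terms of the form $e^{A_{q_k}s_k}\cdots e^{A_{q_2}s_2}B_{q_1}v$, and since these depend real-analytically on the dwell times $s_i$, differentiating at $s_i=0$ shows the span of the reachable set is exactly $\SPAN\{A_{q_k}^{i_k}\cdots A_{q_1}^{i_1}B_{q_1}v\}=\SPAN\{A_wB_qv\}$; no further vectors appear and none are missing. (Alternatively one may simply cite the reachability characterisation of \cite{Sun:Book,MP:Phd,MPLBJH:Real}.)

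The second step is the standard monotone-chain argument. Set $V_k=\SPAN\{\text{columns of }A_w\widetilde B \mid |w|\le k\}$. Using the concatenation convention $A_{w}=A_{w_{k+1}}A_{w_1\cdots w_k}$ for a word of length $k+1$, one obtains $V_{k+1}=V_k+\sum_{q\in Q}A_qV_k$; hence as soon as $V_k=V_{k-1}$ the chain is stationary from that index on. Since $\{0\}\subseteq V_0\subseteq V_1\subseteq\cdots\subseteq\mathbb{R}^n$ can increase strictly at most $n$ times, it is already stationary at $k=n$, so $V_n=\bigcup_{k\ge 0}V_k=\SPAN\{A_wB_qv\}$. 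On the other hand $V_n$ is precisely the column space of $\mathcal{R}(\Sigma)$, because $v_1,\dots,v_M$ enumerate all of $Q_n^*$. Putting the two steps together, $\Sigma$ is span-reachable $\iff\SPAN(Reach_0(\Sigma))=\mathbb{R}^n\iff\dim(\IM\mathcal{R}(\Sigma))=n\iff\Rank\mathcal{R}(\Sigma)=n$.

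For observability, I would first exploit the linearity of \eqref{eq:cs} in $(x_0,u)$: for fixed $(u,q)$ the difference $Y^{\Sigma}_{x_1}(u,q)-Y^{\Sigma}_{x_2}(u,q)$ equals the zero-input output response of $\Sigma$ from the initial state $z=x_1-x_2$ and does not depend on $u$. Hence $\Sigma$ is \emph{not} observable iff there is some $z\neq 0$ whose zero-input output vanishes for every switching signal. In discrete time the zero-input output is $y(t)=C_{q(t)}A_{q(0)\cdots q(t-1)}z$; letting $t$ and $q$ range freely, this is identically zero iff $C_{q'}A_wz=0$ for all $q'\in Q$ and all $w\in Q^*$, i.e.\ iff $z\in\bigcap_{w\in Q^*}\ker(\widetilde C A_w)$; in continuous time the same conclusion follows from the analyticity argument applied to $C_{q_{k+1}}e^{A_{q_k}s_k}\cdots e^{A_{q_1}s_1}z$. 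Finally, the dual of the chain argument --- now a nonincreasing chain of intersections of kernels, stabilising at index $n$ --- gives $\bigcap_{w\in Q^*}\ker(\widetilde C A_w)=\bigcap_{w\in Q_n^*}\ker(\widetilde C A_w)=\ker\mathcal{O}(\Sigma)$; equivalently one can obtain this by applying the span-reachability half to the transposed system $(n,Q,\{(A_q^T,C_q^T,B_q^T)\mid q\in Q\})$ and noting that $\mathcal{O}(\Sigma)^T$ is its reachability matrix up to reversing the words. Either way, $\Sigma$ is observable $\iff\ker\mathcal{O}(\Sigma)=\{0\}\iff\Rank\mathcal{O}(\Sigma)=n$.

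The only genuinely non-routine point is the continuous-time half of the first step: showing that the span of the (infinitely many) reachable states is captured exactly by finitely many matrix products requires the real-analyticity of $s\mapsto e^{As}$ rather than an algebraic identity, and one must be slightly careful that passing to a zero dwell time really recovers the derivative terms and introduces nothing new. Everything else is bookkeeping: tracking the word-concatenation convention $A_w=A_{w_k}\cdots A_{w_1}$ (in particular the reversal of words when transposing), and the elementary lemma that a monotone chain of subspaces of $\mathbb{R}^n$ stabilises within $n$ steps.
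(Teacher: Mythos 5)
Your argument is correct, and it is essentially the standard proof of this result: the paper itself does not prove Theorem~\ref{sect:real:lemma1} but attributes it to \cite{Sun:Book,MP:Phd,MPLBJH:Real}, and your two steps (identifying $\SPAN(Reach_0(\Sigma))$ with $\SPAN\{A_wB_qv\}$ and $\bigcap_{w}\ker(\widetilde{C}A_w)$ as the unobservable set --- exactly the content of Remark~\ref{lin:rem1.5} --- followed by the monotone-chain truncation at word length $n$) reproduce the argument given in those references. You also correctly flag the only delicate point, the continuous-time inclusion ``$\supseteq$'', which is handled there the same way, via invariance of the reachable span under $t\mapsto e^{A_qt}$ and differentiation at zero dwell time.
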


The matrix $\mathcal{R}(\Sigma)$ (resp. $\mathcal{O}(\Sigma)$) will be
called a \emph{span-reachability matrix} (resp. \emph{observability
  matrix}) of $\Sigma$.

\begin{Remark}
\label{lin:rem1}
If a linear subsystem of an \BLSS $\Sigma$ is observable (reachable),
then $\Sigma$ is observable (resp. span-reachable). Hence, by Theorem
\ref{Theo:min}, if a linear subsystem of $\Sigma$ is minimal, then
$\Sigma$ itself is minimal.
\end{Remark}
\begin{Remark}
\label{lin:rem1.5}
In \cite{Sun:Book,MP:Phd,MPLBJH:Real} it is shown that $\ker
\mathcal{O}(\Sigma)=\bigcap_{q \in Q, v \in Q^{*}} \ker C_qA_v$ and
$\IM \mathcal{R}(\Sigma)=\SPAN\{ A_vB_qu \mid v \in Q^{*}, q \in Q, u
\in \mathbb{R}^m\}$.
\end{Remark}
\begin{Remark}
\label{lin:rem2}
Note that observability (span-reachability) of an \BLSS does not imply
observability (reachability) of any of its linear subsystems. In fact,
it is easy to construct a counter example \cite{MP:BigArticle}.
Together with Theorem \ref{Theo:min}, which states that minimal
realizations are unique up to isomorphism, this implies that
\emph{there exists an input-output map which can be realized by an
  \LSS, but which cannot be realized by an \BLSS where all (or some)
  of the linear subsystems are observable (or reachable).}
\end{Remark}
\begin{Remark}[Duality between span-reachability and observability]
\label{rem:dual}
 If $\Sigma=\SwitchSysLin$, then define  the \emph{dual system of $\Sigma$} as 
 $\Sigma^T=(n,Q,\{(A_q^T,C_q^T,B_q^T) \mid q \in Q\})$.  From Theorem \ref{sect:real:lemma1}
 it then follows that $\Sigma$ is observable if and only if $\Sigma^{T}$ is span-reachable.
 Conversely, $\Sigma$ is span-reachable, if and only if $\Sigma^T$ is observable.
 That is, similarly to linear systems, reachability and observability are dual 
 properties for \SLSS.
\end{Remark}

From \cite{MP:BigArticle,MP:Phd,MPLBJH:Real}, we have the following algorithms for
reachability and observability reduction, and minimal representation.

\begin{Procedure}
\label{LSSreach}
~ \\
\textbf{Reachability reduction}:
Assume that $\Rank \mathcal{R}(\Sigma)= r$ and choose a basis
$b_1,\ldots,b_n$ of $\mathbb{R}^n$ such that $b_1,\ldots,b_{r}$ span
$\IM \mathcal{R}(\Sigma)$.  In the new basis, the matrices
$A_q,B_q,C_q$, $q \in Q$ become as follows
\begin{equation}
\label{LSSreach:eq1} 
A_{q}=\begin{bmatrix} A_{q}^{\mathrm R} & A^{'}_{q} \\ 0 & A^{''}_{q} \end{bmatrix},
C_{q}=\begin{bmatrix} C_q^{\mathrm R}, & C_{q}^{'} \end{bmatrix},
B_{q}=\begin{bmatrix} B_{q}^{\mathrm R} \\ 0 \end{bmatrix},
\end{equation}
where $A^{\mathrm R}_q \in \mathbb{R}^{r \times r}, B_q^{\mathrm R}
\in \mathbb{R}^{r \times m}$, and $C^{\mathrm R}_q \in \mathbb{R}^{p
  \times r}$.  As a consequence, $\Sigma^{\mathrm R}=(r, Q,
\{(A_q^{\mathrm R}, B_q^{\mathrm R}, C_q^{\mathrm R})~ |~q \in Q\})$
is span-reachable, and has the same input-output map as $\Sigma$.
\end{Procedure}
Intuitively, $\Sigma^{\mathrm R}$ is obtained from $\Sigma$ by
restricting the dynamics and the output map of $\Sigma$ to the
subspace $\IM \mathcal{R}(\Sigma)$.

\begin{Procedure}
\label{LSSobs}
~ \\
\textbf{Observability reduction}:
Assume that $\ker \mathcal{O}(\Sigma)=n-o$, and let $b_1,\ldots,b_n$
be a basis in $\mathbb{R}^n$ such that $b_{o+1},\ldots,b_{n}$ span
$\ker \mathcal O(\Sigma)$.  In this new basis, $A_q$, $B_q$, and $C_q$
can be rewritten as
\[ A_{q}=\begin{bmatrix} A_{q}^{\mathrm O} & 0 \\ A^{'}_{q} &
A^{''}_{q} \end{bmatrix}, C_{q}=\begin{bmatrix} C_q^{\mathrm O}, &
0 \end{bmatrix}, B_{q}=\begin{bmatrix} B_{q}^{\mathrm O} \\
B_{q}^{'} \end{bmatrix},
\]
where $A^{\mathrm O}_{q} \in \mathbb{R}^{o \times o}, B_q^{\mathrm O}
\in \mathbb{R}^{o \times m}$, and $C_q^{\mathrm O} \in \mathbb{R}^{p
  \times o}$.  Consequently, the \BLSS $\Sigma^{\mathrm O}=(o, Q,
\{(A_q^{\mathrm O}, B_q^{\mathrm O}, C_q^{\mathrm O})~ |~q \in Q\})$
is observable and its input-output map is the same as that of
$\Sigma$. If $\Sigma$ is span-reachable, then so is $\Sigma^{\mathrm O}$.
\end{Procedure}
Intuitively, $\Sigma^{\mathrm O}$ is obtained from $\Sigma$ by merging
any two states $x_1$, $x_2$ of $\Sigma$, for which
$\mathcal{O}(\Sigma)x_1=\mathcal{O}(\Sigma)x_2$.

\begin{Procedure}
~ \\
\textbf{Minimal representation}:\\
\label{LSSmin}
Transform $\Sigma$ to a reachable \BLSS $\Sigma^{\mathrm R}$ by
Procedure~\ref{LSSreach}. Subsequently, transform $\Sigma^{\mathrm R}$
to an observable \BLSS $\Sigma^{\mathrm M}=(\Sigma^{\mathrm
  R})^{\mathrm O}$ using Procedure~\ref{LSSobs}.  Then
$\Sigma^{\mathrm M}$ is a minimal \BLSS which is equivalent to
$\Sigma$.
\end{Procedure}

\section{Stability, grammians and $\mathcal{L}_2$ norms}
\label{sect:stab}
In this section, we briefly review the definition of
controllability/observability grammians, $\mathcal{L}_2$ norm, and
quadratic stability for \SLSS. We also recall the basic relationships
between these concepts.
\begin{Definition}[Quadratic stability]
An \BLSS\ \[\Sigma = \SwitchSysLin[{}]\] is said to be quadratically
stable if there exists a positive definite matrix $P > 0$
such that
\begin{subequations}\label{quadratic:eq1}
\begin{align}\label{quadratic:eq1all}
\forall q \in Q: \mathbf{S}(q,\Sigma,P)< 0,
\end{align}
where
\begin{itemize}
\item in continuous-time (Lyapunov equation)
\begin{equation}
\label{quadratic:eq1c}
\mathbf{S}(q,\Sigma,P)=A_{q}^T P+ P A_{q},
\end{equation} 
\item in discrete-time (Stein equation)
\begin{equation}
\label{quadratic:eq1d}
\mathbf{S}(q,\Sigma,P)=A_{q}^T PA_{q}-P. 
\end{equation} 
\end{itemize}
\end{subequations}
\end{Definition}
It is well-known \cite{D:Lib} that quadratic stability implies
exponential stability for all switching signals.  For our purposes
quadratic stability is convenient, as it implies the existence of an
$\mathcal{L}_2$ gain (Definition~\ref{def:l2}) and
controllability/observability grammians.

\begin{Definition}(\emph{Controllability/observability grammians}) \\
An \emph{observability grammian} of $\Sigma$ is a positive definite
solution $\Q > 0$ of the following inequality
\begin{subequations}\label{ContObsGram}
\begin{align}\label{ContObsGramall}
\forall q \in Q: \mathbf{O}(q,\Sigma,\Q) \le 0,
\end{align}
where
\begin{itemize}
\item in continuous-time
\begin{equation}\label{ContObsGramc}
\mathbf{O}(q,\Sigma,\Q)=A^T_q\Q+\Q A_q +C_q^TC_q,  
\end{equation} 
\item in discrete-time
\begin{equation}\label{ContObsGramd}
\mathbf{O}(q,\Sigma,\Q)=A^T_q\Q A_q +C_q^TC_q -\Q.  
\end{equation} 
\end{itemize}
\end{subequations}

A \emph{controllability grammian} of $\Sigma$ is a positive definite
solution $\CP > 0$ of the following inequality
\begin{subequations}\label{ContContrGram}
\begin{align}\label{ContContrGramall}
\forall q \in Q: \mathbf{C}(q,\Sigma,\CP) \le  0,
\end{align}
where
\begin{itemize}
\item in continuous-time
\begin{equation}\label{ContContrGramc}
\mathbf{C}(q,\Sigma,\CP)=A_q\CP+\CP A_q^T +B_qB^T_q, 
\end{equation} 
\item in discrete-time
\begin{equation}\label{ContContrGramd}
\mathbf{C}(q,\Sigma,\CP)=A_q\CP A_q^T +B_qB^T_q -\CP.
\end{equation} 
\end{itemize}
\end{subequations}
We will call the eigenvalues of the product $\CP\Q$ the \emph{singular
  values} of the pair of grammians $(\CP,\Q)$.
\end{Definition}
Existence of a controllability or observability grammian does not
imply quadratic stability.
 However, if in \eqref{ContContrGramall} or \eqref{ContObsGramall} we replace
 inequality by strict inequality, then existence of a positive definite solution to
 the thus obtaines LMIs is equivalent to quadratic stability.
More precisely, using techniques from \cite{BoydBook},
one can show that
\begin{Lemma}
\label{BalancClaim9}
 The following are equivalent:
 \begin{itemize}
 \item{\textbf{(i)}}
 $\Sigma$ is quadratically stable, 
 \item{\textbf{(ii)}}
 there exists $\Q > 0$ such that $\forall q \in Q: \mathbf{O}(q,\Sigma,\Q) < 0$,
 \item{\textbf{(iii)}}
 there exists $\CP > 0$ such that $\forall q \in Q: \mathbf{C}(q,\Sigma,\CP) < 0$,
 \end{itemize}
\end{Lemma}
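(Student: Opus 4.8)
The plan is to establish the two cycles of implications. The implication (i) $\Rightarrow$ (ii) and (i) $\Rightarrow$ (iii) are the easy directions: if $P>0$ satisfies $\mathbf{S}(q,\Sigma,P)<0$ for all $q$, then $P$ is already a strict observability grammian after rescaling. Concretely, by compactness of $Q$ (it is finite) there is $\varepsilon>0$ with $\mathbf{S}(q,\Sigma,P)\le -\varepsilon I$ for all $q$; then for a sufficiently large scalar $\alpha>0$ the matrix $\Q=\alpha P$ satisfies $A_q^T\Q+\Q A_q+C_q^TC_q=\alpha\,\mathbf{S}(q,\Sigma,P)+C_q^TC_q\le -\alpha\varepsilon I+\|C_q^TC_q\|\,I<0$ in continuous time, and similarly $A_q^T\Q A_q+C_q^TC_q-\Q=\alpha\,\mathbf{S}(q,\Sigma,P)+C_q^TC_q<0$ in discrete time. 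The same computation with $B_qB_q^T$ in place of $C_q^TC_q$ and $A_q^T$ interchanged with $A_q$ gives (i) $\Rightarrow$ (iii).

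For the converse directions (ii) $\Rightarrow$ (i) and (iii) $\Rightarrow$ (i): if $\Q>0$ satisfies the strict observability LMI, then dropping the (positive semidefinite) term $C_q^TC_q$ only makes the left-hand side smaller, so $\mathbf{S}(q,\Sigma,\Q)<0$ for all $q$, which is exactly quadratic stability witnessed by $P=\Q$. Likewise, from (iii) with $\CP>0$ one gets $A_q\CP+\CP A_q^T<0$ (resp. $A_q\CP A_q^T-\CP<0$) for all $q$; taking inverses, $P=\CP^{-1}$ then satisfies the dual Lyapunov (resp. Stein) inequality — in continuous time, multiply $A_q\CP+\CP A_q^T<0$ on both sides by $\CP^{-1}$ to get $\CP^{-1}A_q+A_q^T\CP^{-1}<0$; in discrete time one uses a Schur-complement argument on $\begin{bmatrix}\CP & A_q\CP\\ \CP A_q^T & \CP\end{bmatrix}>0$ to pass between $A_q\CP A_q^T-\CP<0$ and $A_q^T\CP^{-1}A_q-\CP^{-1}<0$. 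Either way $P=\CP^{-1}>0$ witnesses quadratic stability.

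I expect the only genuinely delicate point to be the discrete-time passage between the controllability form $A_q\CP A_q^T+B_qB_q^T-\CP<0$ and the Lyapunov-type inequality for $\CP^{-1}$; in continuous time the manipulation is a one-line congruence, but in discrete time one must invoke the Schur complement / congruence lemma carefully (this is precisely the "techniques from \cite{BoydBook}" the statement alludes to). Everything else is bookkeeping: finiteness of $Q$ to extract a uniform margin, and the observation that positive semidefinite "output" or "input" terms can be freely discarded when relaxing a strict inequality. I would present the argument as the cycle (i) $\Rightarrow$ (ii) $\Rightarrow$ (i) together with (i) $\Rightarrow$ (iii) $\Rightarrow$ (i), using the rescaling trick in the forward steps and the drop-the-PSD-term trick plus the inverse/Schur manipulation in the backward steps.
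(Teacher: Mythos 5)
Your argument is correct and takes essentially the same route as the paper: the forward implications by rescaling the quadratic-stability certificate so that the margin absorbs $C_q^TC_q$ (resp.\ $B_qB_q^T$), and the converses by discarding the positive semidefinite term and passing to $\CP^{-1}$ via a congruence in continuous time and a Schur-complement argument in discrete time, which the paper packages as Lemma \ref{Claim0}. The only step you gloss is that ``interchanging $A_q$ with $A_q^T$'' in (i) $\Rightarrow$ (iii) itself requires first replacing $P$ by $P^{-1}$ through that same congruence/Schur manipulation, exactly as the paper does explicitly.
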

The proof of Lemma \ref{BalancClaim9} can be found in
\ref{appl1}.  

Note that existence of controllability grammian does not
imply controllability, even if $|Q|=1$, i.e. we have the classical
linear case.  For a counter-example for the linear case, see
\cite{BeckThesis}.

To define the $\mathcal{L}_2$ norm for an \BLSS, we recall that
$||\cdot||_2$ denotes either the $l_2$ norm (in discrete-time) or the
$L_2$ norm (in continuous-time).

\begin{Definition}[\cite{HespanhaNov03}]\label{def:l2}
We say that $Y^{\Sigma}$ has an $\mathcal{L}_2$ gain $\gamma > 0$ if
\begin{enumerate}[a)]
\item $Y^{\Sigma}(u,q)$  belongs to $L_2(T,Y)$ (in continuous time case) or
      to $l_2(Y)$ (in discrete time case) for all $(u,q) \in\mathcal{U}\times\mathcal{Q}$,
and
\item
\begin{equation}\label{l2gain}
\sup_{q \in\mathcal{Q}} ||Y^{\Sigma}(u,q)||_{2} \le \gamma
||u||_{2}\quad\forall u\in\mathcal{U}. 
\end{equation}
\end{enumerate}
If $Y^{\Sigma}$ has an $\mathcal{L}_2$ gain, then we define the
$\mathcal{L}_2$ norm of $Y^{\Sigma}$, denoted by 
$||Y^{\Sigma}||_{\mathcal{L}_2}$, as the infimum of all $\gamma > 0$
such that \eqref{l2gain} holds.  If $Y^{\Sigma}$ does not have
an $\mathcal L_2$ gain, then we set
$||Y^{\Sigma}||_{\mathcal{L}_2}=+\infty$. 
\end{Definition}
Note that existence of a $\mathcal{L}_2$ gain of $Y^{\Sigma}$ means that
the outputs of $\Sigma$ belong to $L_2(T,Y)$ (cont. time) or
$l_2(Y)$ (disc. time) respectively.
We note that since $u\mapsto Y^{\Sigma}(u,q)$ is linear for each $q$,
the $\mathcal{L}_2$ norm of $Y^{\Sigma}$ can equivalently be defined
as
\[
||Y^{\Sigma}||_{\mathcal{L}_2}=\sup_{q
  \in\mathcal{Q}}~\sup_{||u||_2=1} ||Y^{\Sigma}(u,q)||_{2},
\]
whenever $||Y^{\Sigma}(u,q)||_{2}$ exists. In other words,
$||Y^{\Sigma}||_{\mathcal{L}_2}$ is the supremum of the operator norms
\[
||Y^{\Sigma}(\cdot,q)||=\sup_{||u||_2=1} ||Y^{\Sigma}(u,q)||_{2}.
\]

\begin{Lemma}
\label{BalancClaim6}
If $\Sigma$ is quadratically stable, then \eqref{BalancClaim6:eq1} below has a positive definite solution $P$.   
\begin{subequations}\label{BalancClaim6:eq1}
\begin{align}\label{BalancClaim6:eq1all}
\forall q \in Q: \mathbf{G}_\gamma(q,\Sigma,P)< 0,
\end{align}
where
\begin{itemize}
\item in continuous time
\begin{equation}\label{BalancClaim6:eq1c}
\mathbf{G}_\gamma(q,\Sigma,P)=
\begin{bmatrix}
A_q^T P + P A_q+C_q^TC_q & P B_q\\
B_q^T P & -\gamma^{2}I
\end{bmatrix},
\end{equation}

\item in discrete time
\begin{equation}\label{BalancClaim6:eq1d}
\mathbf{G}_\gamma(q,\Sigma,P)=\begin{bmatrix} A_q^TPA_q+C_q^TC_q-P, & A_q^TPB_q \\ 
B_q^TPA_q, & B^T_qPB_{q} -\gamma^2I 
\end{bmatrix}.
\end{equation}
\end{itemize}
\end{subequations}
If a
positive definite solution to \eqref{BalancClaim6:eq1} exists, then
for any $(u,q) \in \mathcal{U} \times \mathcal{Q}$, $||Y^{\Sigma}(u,q)||_{2}$ is defined and
$||Y^{\Sigma}||_{\mathcal{L}_2} \le \gamma$.
\end{Lemma}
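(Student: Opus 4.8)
The plan is to establish the two halves of the statement in turn. For the first half, I would observe that quadratic stability already furnishes the required matrix: by Lemma~\ref{BalancClaim9} there is $P>0$ with $\mathbf{O}(q,\Sigma,P)<0$ for every $q\in Q$, and in \emph{both} the continuous- and the discrete-time case the $(1,1)$-block of $\mathbf{G}_{\gamma}(q,\Sigma,P)$ equals $\mathbf{O}(q,\Sigma,P)$. Taking the Schur complement of $\mathbf{G}_{\gamma}(q,\Sigma,P)$ with respect to this negative-definite block, $\mathbf{G}_{\gamma}(q,\Sigma,P)<0$ becomes equivalent to $-\gamma^{2}I-N_q^{T}\mathbf{O}(q,\Sigma,P)^{-1}N_q<0$, where $N_q=PB_q$ in continuous-time and $N_q=A_q^{T}PB_q$ in discrete-time. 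Because $\mathbf{O}(q,\Sigma,P)^{-1}<0$, the term $-N_q^{T}\mathbf{O}(q,\Sigma,P)^{-1}N_q$ is positive semidefinite, so any $\gamma$ with $\gamma^{2}>\max_{q\in Q}\lambda_{\max}\!\big(-N_q^{T}\mathbf{O}(q,\Sigma,P)^{-1}N_q\big)$ makes $\mathbf{G}_{\gamma}(q,\Sigma,P)<0$ hold simultaneously for all $q$; the maximum is finite since $Q$ is finite. Hence \eqref{BalancClaim6:eq1} has a positive definite solution (the same $P$) once $\gamma$ is chosen large enough.

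For the second half I would run the standard storage-function/dissipativity argument. Fix $(u,q)\in\mathcal{U}\times\mathcal{Q}$, put $x=X_0^{\Sigma}(u,q)$ and $y=Y^{\Sigma}(u,q)$ (so $x(0)=0$), and set $V(x)=x^{T}Px\ge 0$. Evaluating the quadratic form associated with $\mathbf{G}_{\gamma}(q(t),\Sigma,P)$ at the vector $(x(t)^{T},u(t)^{T})^{T}$ — which is $\le 0$ since $\mathbf{G}_{\gamma}(q(t),\Sigma,P)<0$ — and expanding, one gets in continuous-time
\[
\tfrac{d}{dt}\big(x(t)^{T}Px(t)\big)=x(t)^{T}\big(A_{q(t)}^{T}P+PA_{q(t)}\big)x(t)+2x(t)^{T}PB_{q(t)}u(t)\le \gamma^{2}||u(t)||_{2}^{2}-||y(t)||_{2}^{2}
\]
for almost every $t$, and in discrete-time
\[
x(t+1)^{T}Px(t+1)-x(t)^{T}Px(t)\le \gamma^{2}||u(t)||_{2}^{2}-||y(t)||_{2}^{2},\qquad t\in\mathbb{N}.
\]
In the continuous-time case one uses here that $x\in AC(T,X)$ (valid since $u\in L_{2}(T,U)\subseteq L^{1}_{\mathrm{loc}}$ and the $A_{q(t)}$ are bounded and piecewise constant), so that $t\mapsto x(t)^{T}Px(t)$ is absolutely continuous and the chain rule applies a.e.

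Integrating the continuous-time inequality over $[0,T]$ (resp. summing the discrete-time one over $t=0,\dots,N-1$), discarding the nonnegative endpoint term $x(T)^{T}Px(T)$ (resp. $x(N)^{T}Px(N)$) and using $x(0)=0$, gives $\int_{0}^{T}||y(s)||_{2}^{2}\,ds\le \gamma^{2}||u||_{2}^{2}$ (resp. $\sum_{t=0}^{N-1}||y(t)||_{2}^{2}\le \gamma^{2}||u||_{2}^{2}$). Letting $T\to\infty$ (resp. $N\to\infty$), the left-hand side is nondecreasing and bounded, hence convergent, so $Y^{\Sigma}(u,q)$ lies in $L_{2}(T,Y)$ (resp. $l_{2}(Y)$) with $||Y^{\Sigma}(u,q)||_{2}\le \gamma||u||_{2}$. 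Taking the supremum over $q\in\mathcal{Q}$ and then over $||u||_{2}=1$ yields $||Y^{\Sigma}||_{\mathcal{L}_2}\le \gamma$.

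The algebra is routine; I expect the only delicate points to be analytic rather than computational: in the continuous-time case, justifying that the switched trajectory is absolutely continuous so that the fundamental theorem of calculus applies and the LMI may legitimately be evaluated at $q(t)$ for almost every $t$, and the passage to the limit $T\to\infty$ (which is immediate from monotonicity once the uniform bound $\gamma^{2}||u||_{2}^{2}$ is in hand). It is also worth emphasizing that in the first half $\gamma$ is not prescribed: quadratic stability only yields solvability of \eqref{BalancClaim6:eq1} for all sufficiently large $\gamma$, with a finite threshold precisely because $|Q|<\infty$.
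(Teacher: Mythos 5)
Your second half is essentially the paper's own proof: the same storage function $x^TPx$, the same dissipation inequality obtained by evaluating the quadratic form of $\mathbf{G}_\gamma(q(t),\Sigma,P)$ along the trajectory, integration (resp.\ summation) from $x(0)=0$, discarding the nonnegative endpoint term, and a monotone passage to the limit. For the first half you take a genuinely different route: you feed the strict observability solution of Lemma~\ref{BalancClaim9} directly into $\mathbf{G}_\gamma$ and absorb the off-diagonal blocks by a Schur complement with respect to the negative definite $(1,1)$-block, choosing $\gamma$ large; the paper instead starts from the strict controllability solution $\CP$ of Lemma~\ref{BalancClaim9}, uses Lemma~\ref{Claim0} to rewrite $\forall q: \mathbf{C}(q,\Sigma,\CP)<0$ as $\mathbf{G}_{1}(q,\Sigma,\CP^{-1})-\mathrm{diag}(C_q^TC_q,0)<0$, and then shows by rescaling that $P=\gamma^{2}\CP^{-1}$ works for $\gamma$ large enough. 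Both arguments rest on the same two ingredients (Lemma~\ref{BalancClaim9} and ``take $\gamma$ sufficiently large, which is possible since $Q$ is finite''), so neither is more general; yours is slightly more direct in that it avoids Lemma~\ref{Claim0} and keeps the same $P$. One small correction: in discrete time the $(2,2)$-block of $\mathbf{G}_\gamma$ is $B_q^TPB_q-\gamma^{2}I$, not $-\gamma^{2}I$, so the Schur-complement condition is $B_q^TPB_q-\gamma^{2}I-N_q^{T}\mathbf{O}(q,\Sigma,P)^{-1}N_q<0$ with $N_q=A_q^TPB_q$; since $B_q^TPB_q$ is a fixed positive semidefinite matrix, this only raises the (still finite) threshold for $\gamma$ and your conclusion is unaffected.
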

The proof of Lemma \ref{BalancClaim6} can be found in
\ref{appl2}.

\section{Invariance of state-space representation}
\label{sect:inv}
In the previous section, we defined quadratic stability,
$\mathcal{L}_2$ gains, and grammians. The concepts were defined in
terms of LMIs. We will show that the existence of a solution to those
LMIs is a property of the input-output map. Furthermore, for
equivalent minimal systems, the set of solutions are isomorphic. In
order to formalize this result, we will introduce the following
notation.
\begin{Definition}
\label{def:sets}
For a \BLSS
\[\Sigma = \SwitchSysLin[{}],\]
define the following subsets of the set of $n \times n$ strictly
positive definite matrices
\begin{itemize}
\item $\mathbf{S}(\Sigma)$ is the ``stability" set of all $P > 0$ which satisfy
\eqref{quadratic:eq1}.
\item $\mathbf{O}(\Sigma)$ is the ``observability" set of all $\Q > 0$ for which
\eqref{ContObsGram} holds.
\item $\mathbf{C}(\Sigma)$ is the ``controllability" set of all $\CP > 0$ for which
\eqref{ContContrGram} holds.
\item For $\gamma > 0$, let $\mathbf{G}_{\gamma}(\Sigma)$ be the set
of all $P > 0$ which satisfy \eqref{BalancClaim6:eq1}.
\end{itemize}
\end{Definition}
 
Now, we can state the following result.
\begin{Theorem}
\label{inv:theo}
Let $\mathbf{K}$ be any symbol from
$\{\mathbf{S},\mathbf{O},\mathbf{C},\mathbf{G}_{\gamma}\}$.
\begin{enumerate}
\item If the \BLSS $\Sigma$ is such that $\mathbf{K}(\Sigma) \ne
\emptyset$, then for any minimal \BLSS $\Sigma_{\mathrm m}$ which is
equivalent to $\Sigma$, $\mathbf{K}(\Sigma_{\mathrm m}) \ne
\emptyset$.
\item
 If $\CP \in \mathbf{C}(\Sigma)$, $\Q \in \mathbf{O}(\Sigma)$, then
      for any minimal \LSS\ $\Sigma_m$ which is equivalent to $\Sigma$,
      there exist $\CP_m \in \mathbf{C}(\Sigma_m)$ and $\Q_m \in \mathbf{O}(\Sigma_m)$,
      such that the following holds: if $\sigma_1 \ge, \ldots, \ge \sigma_n$ are the
      singular values of $(\CP,\Q)$ and $\lambda_1 \ge, \ldots, \ge \lambda_k$ are
      the singular values of $(\CP_m,\Q_m)$, then $\sigma_{n-k+i} \le \lambda_i \le \sigma_i$,
      $i=1,\ldots,k$, $k=\dim \Sigma_m$.

\item Let $\Sigma_1$ and $\Sigma_2$ be two \SLSS ~of dimension $n$ and
let $\MORPH:\Sigma_1 \rightarrow \Sigma_2$ be an isomorphism between
them.  If $\mathbf{K} \in
\{\mathbf{S},\mathbf{O},\mathbf{G}_{\gamma}\}$ then define
${M}=\MORPH^{-1} \in \mathbb{R}^{n \times n}$; if
$\mathbf{K}=\mathbf{C}$, then define ${M}=\MORPH^T$.  Then
\begin{equation}
\label{inv:theo:eq1} 
P \in \mathbf{K}(\Sigma_1) \iff {M}^{T}P {M} \in \mathbf{K}(\Sigma_2).
\end{equation}
In particular, for any two minimal and equivalent \SLSS~ $\Sigma_1$
and $\Sigma_2$, there exists a non singular matrix ${M}$ such that
\eqref{inv:theo:eq1} holds.
\end{enumerate}
\end{Theorem}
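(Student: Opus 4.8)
The plan is to prove the isomorphism assertion (part~3) first, by a direct computation, and then to deduce parts~1 and~2 from it by reducing an arbitrary realization of the given input-output map to a minimal one via the reachability and observability reductions (Procedures~\ref{LSSreach}--\ref{LSSmin}), keeping track of the relevant matrices at each step. Throughout write $\mathbf{K}(q,\Sigma,P)$ for whichever of $\mathbf{S}(q,\Sigma,P)$, $\mathbf{O}(q,\Sigma,P)$, $\mathbf{C}(q,\Sigma,P)$, $\mathbf{G}_\gamma(q,\Sigma,P)$ corresponds to $\mathbf{K}$, so that $P\in\mathbf{K}(\Sigma)$ amounts to $P>0$ together with negative (semi)definiteness of $\mathbf{K}(q,\Sigma,P)$ for all $q$. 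For part~3, set $S=\MORPH$, so that $A_q^a=SA_qS^{-1}$, $B_q^a=SB_q$, $C_q^a=C_qS^{-1}$ for every $q$. A short manipulation --- repeatedly using $S^{T}(S^{-1})^{T}=I$ and $S^{-1}S=I$, and, in the $\mathbf{G}_\gamma$ case, also $M^{T}C_q^{T}C_qM=(C_q^a)^{T}C_q^a$ and $M^{T}PB_q=(M^{T}PM)B_q^a$ --- shows that, for the prescribed $M$, $\mathbf{K}(q,\Sigma_2,M^{T}PM)$ is a congruence transform of $\mathbf{K}(q,\Sigma_1,P)$: by $M$ when $\mathbf{K}\in\{\mathbf{S},\mathbf{O},\mathbf{C}\}$ and by $\diag(M,I)$ when $\mathbf{K}=\mathbf{G}_\gamma$. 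Since $M$ is nonsingular, this congruence preserves negative (semi)definiteness, and $P>0\iff M^{T}PM>0$; this is \eqref{inv:theo:eq1}. The discrete-time cases are identical with Stein expressions replacing the Lyapunov ones, and the ``in particular'' sentence follows since any two equivalent minimal \SLSS~ are isomorphic (Theorem~\ref{Theo:min}).

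The heart of parts~1 and~2 is one reduction step, which I describe for the reachability reduction, with $r=\dim\Sigma^{\mathrm R}$ (the observability reduction is analogous, using the transposed shearing $\bigl[\begin{smallmatrix}I&0\\W&I\end{smallmatrix}\bigr]$). By part~3 we may first change basis (Procedure~\ref{LSSreach}) to put $\Sigma$ in block form with reduction $\Sigma^{\mathrm R}$, and then apply the block-triangular shearing isomorphism $\bigl[\begin{smallmatrix}I&W\\0&I\end{smallmatrix}\bigr]$; one checks that the shearing leaves $\Sigma^{\mathrm R}$ literally unchanged, and that for $W=-P_{12}P_{22}^{-1}$ it makes the member $P$ of $\mathbf{K}$ block-diagonal, $P=\diag(P_1,P_2)$ with $P_1$ of size $r$. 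A routine computation then shows that the principal submatrix of $\mathbf{K}(q,\Sigma,P)$ obtained by deleting the rows and columns of the discarded state coordinates equals $\mathbf{K}(q,\Sigma^{\mathrm R},P_1)$ for every $q$; being a principal submatrix of a negative (semi)definite matrix it is negative (semi)definite, and $P_1>0$ is a diagonal block of $P>0$, so $P_1\in\mathbf{K}(\Sigma^{\mathrm R})$. This proves part~1: given $\mathbf{K}(\Sigma)\neq\emptyset$, apply the reduction twice (Procedure~\ref{LSSmin}) to obtain a minimal $\Sigma^{\mathrm M}$ equivalent to $\Sigma$ with $\mathbf{K}(\Sigma^{\mathrm M})\neq\emptyset$; any other equivalent minimal $\Sigma_{\mathrm m}$ is isomorphic to $\Sigma^{\mathrm M}$ (Theorem~\ref{Theo:min}), hence $\mathbf{K}(\Sigma_{\mathrm m})\neq\emptyset$ by part~3.

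For part~2 one must additionally control the singular values through the two reductions. Since $\CP,\Q>0$, the eigenvalues of $\CP\Q$ equal those of $\CP^{1/2}\Q\CP^{1/2}$, hence are real positive, and admit the Courant--Fischer-type descriptions $\sigma_i=\max_{\dim W=i}\min_{0\neq y\in W}(y^{T}\Q y)/(y^{T}\CP^{-1}y)=\max_{\dim W=i}\min_{0\neq y\in W}(y^{T}\CP y)/(y^{T}\Q^{-1}y)$ (substitute $y=\CP^{1/2}x$, respectively $y=\Q^{1/2}x$, into Courant--Fischer), along with the corresponding min--max forms. In the reachability reduction the correct reduced grammians are $\Q^{\mathrm R}=\Q_{11}$, the compression of $\Q$ to the retained subspace $V=\IM\mathcal{R}(\Sigma)$, and $\CP^{\mathrm R}=\CP_{11}-\CP_{12}\CP_{22}^{-1}\CP_{12}^{T}$, the Schur complement --- the latter being precisely the block-diagonalizing choice above, hence a valid controllability grammian of $\Sigma^{\mathrm R}$, and for which the block-inverse formula gives $(\CP^{\mathrm R})^{-1}$ equal to the compression of $\CP^{-1}$ to $V$. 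Feeding these into the first identity and restricting the optimization to subspaces $W\subseteq V$ gives, by the usual Cauchy-interlacing argument (max--min for the upper bound, min--max for the lower), $\sigma_{n-r+i}\le\sigma^{\mathrm R}_i\le\sigma_i$ for $i=1,\dots,r$, where $\sigma^{\mathrm R}_i$ are the singular values of $(\CP^{\mathrm R},\Q^{\mathrm R})$. Applying the observability reduction to $\Sigma^{\mathrm R}$ is mirror-symmetric --- now $\CP$ is compressed and $\Q$ replaced by its Schur complement, using the second identity --- and yields $\sigma^{\mathrm R}_{r-k+i}\le\lambda_i\le\sigma^{\mathrm R}_i$ for $i=1,\dots,k$, $k=\dim\Sigma_{\mathrm m}$, where $\lambda_i$ are the singular values of the resulting grammian pair $(\CP^{\mathrm M},\Q^{\mathrm M})$ of $\Sigma^{\mathrm M}$. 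Composing the two chains gives $\sigma_{n-k+i}\le\lambda_i\le\sigma_i$; finally transport $\CP^{\mathrm M},\Q^{\mathrm M}$ along the isomorphism $\Sigma^{\mathrm M}\to\Sigma_{\mathrm m}$ of Theorem~\ref{Theo:min}, which conjugates $\CP\Q$ by a similarity and hence preserves the singular values, to obtain $\CP_{\mathrm m}\in\mathbf{C}(\Sigma_{\mathrm m})$, $\Q_{\mathrm m}\in\mathbf{O}(\Sigma_{\mathrm m})$ with the same $\lambda_i$. I expect the main obstacle to be exactly this last step: one has to identify the reduced grammians as Schur complements (rather than as naive compressions of both grammians), since only then do the grammian inequalities and the generalized-eigenvalue interlacing hold simultaneously.
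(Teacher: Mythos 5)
Your proof is correct and follows, in substance, the same route as the paper: part~3 by direct computation plus the isomorphism theorem for minimal realizations, and parts~1--2 by pushing the grammians through Procedures~\ref{LSSreach}--\ref{LSSobs}, with the reduced observability grammian taken as the compression $\Q_{11}$ and the reduced controllability grammian as the Schur complement of $\CP$ (the paper encodes the latter identically, as the inverse of the $(1,1)$ block of $\CP^{-1}$ in Lemma~\ref{inv:theo:lemma2}), followed by an interlacing argument for the generalized eigenvalues of the pencil $\Q-\lambda\CP^{-1}$. The differences are presentational rather than structural: where you handle the observability reduction by a mirror-image shearing argument, the paper passes to the dual system via Lemma~\ref{inv:theo:lemma3}; and where you prove the interlacing yourself through Courant--Fischer characterizations of the pencil eigenvalues restricted to the retained subspace, the paper simply invokes Gantmacher's theorem on pencils under linear constraints \cite{GantmacherBook}, which is the same fact. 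Your self-contained interlacing proof and the explicit block-diagonalizing shearing arguably make the mechanism more transparent; the paper's duality detour buys a shorter write-up at the price of the extra bookkeeping with inverse grammians. One small inaccuracy: in the discrete-time controllability case the retained principal submatrix of $\mathbf{C}(q,\Sigma,\CP)$ does not \emph{equal} $\mathbf{C}(q,\Sigma^{\mathrm R},\CP^{\mathrm R})$ but exceeds it by the positive semi-definite term $A'_q P_2 (A'_q)^T$; since the inequality goes the right way, your conclusion $\CP^{\mathrm R}\in\mathbf{C}(\Sigma^{\mathrm R})$ is unaffected, but the word ``equals'' should be weakened there. Similarly, the single formula $W=-P_{12}P_{22}^{-1}$ is the correct shearing only for the congruence $\CP\mapsto S\CP S^{T}$; for $\mathbf{S},\mathbf{O},\mathbf{G}_\gamma$ no shearing is needed at all, as the raw $(1,1)$ block already works.
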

The theorem above expresses that quadratic stability and existence of
controllability/observability grammians are preserved by minimality.
In fact, if one of these properties holds for a state-space
representation, then it holds for any minimal state-space representation.
From Theorem \ref{inv:theo} it follows that it is sufficient to perform balanced truncation on minimal systems. This will be explained in detail in Remark \ref{min:suff}.
\begin{Corollary}
For minimal \SLSS, the singular values of grammians do not depend on
the choice of state-space representation.  Indeed, assume that
$\Sigma_1$ is a minimal \LSS, and consider the singular values
$\sigma_1,\ldots,\sigma_n$ for a choice of grammians $(\CP_1,\Q_1)$ of
$\Sigma_1$.  Then for any minimal \LSS\ $\Sigma_2$ which is equivalent
to $\Sigma_1$ there exists a pair of grammians $(\CP_2,\Q_2)$ of
$\Sigma_2$ such that the singular values of $(\CP_2,\Q_2)$ are also
$\sigma_1,\ldots,\sigma_n$.
\end{Corollary}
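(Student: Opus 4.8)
The plan is to derive this directly from part 3 of Theorem \ref{inv:theo} together with the uniqueness-up-to-isomorphism statement of Theorem \ref{Theo:min}. First I would invoke Theorem \ref{Theo:min} to obtain an \LSS\ isomorphism $\MORPH:\Sigma_1\rightarrow\Sigma_2$, which exists because $\Sigma_1$ and $\Sigma_2$ are minimal and equivalent. Then I would apply the equivalence \eqref{inv:theo:eq1} twice: with $\mathbf{K}=\mathbf{O}$ and $M=\MORPH^{-1}$ it yields that $\Q_2:=\MORPH^{-T}\Q_1\MORPH^{-1}$ is an observability grammian of $\Sigma_2$, and with $\mathbf{K}=\mathbf{C}$ and $M=\MORPH^{T}$ it yields that $\CP_2:=\MORPH\CP_1\MORPH^{T}$ is a controllability grammian of $\Sigma_2$ (here $\MORPH^{-T}$ abbreviates $(\MORPH^{-1})^{T}$).

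The remaining step is to check that the pair $(\CP_2,\Q_2)$ has the prescribed singular values. Since the singular values of a pair of grammians are by definition the eigenvalues of their product, I would simply compute
\[ \CP_2\Q_2=\MORPH\CP_1\MORPH^{T}\MORPH^{-T}\Q_1\MORPH^{-1}=\MORPH\,(\CP_1\Q_1)\,\MORPH^{-1}, \]
so that $\CP_2\Q_2$ is similar to $\CP_1\Q_1$ and hence has the same spectrum (with multiplicities). Therefore the singular values of $(\CP_2,\Q_2)$ are exactly $\sigma_1,\ldots,\sigma_n$, which is the claim. The same reasoning works unchanged for the alternative normalization $\sigma_i=\sqrt{\lambda_i(\CP\Q)}$ used in the introduction, since similar matrices have identical eigenvalues.

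I do not expect any genuine obstacle: the entire content has already been isolated in Theorem \ref{inv:theo}, and the only extra facts used are that conjugation by an invertible matrix preserves eigenvalues and that $\MORPH^{T}\MORPH^{-T}=I$. The one place to be slightly careful is matching the correct transformation matrix $M$ to each of $\mathbf{O}$ and $\mathbf{C}$ (namely $\MORPH^{-1}$ for the observability grammian versus $\MORPH^{T}$ for the controllability grammian), since choosing the wrong one would not produce a valid grammian and the cancellation $\CP_2\Q_2=\MORPH(\CP_1\Q_1)\MORPH^{-1}$ would fail.
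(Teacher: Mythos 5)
Your proposal is correct and follows essentially the same route the paper intends: the corollary is an immediate consequence of part 3 of Theorem \ref{inv:theo} (with the isomorphism supplied by Theorem \ref{Theo:min}), transforming the grammians as $\Q_2=\MORPH^{-T}\Q_1\MORPH^{-1}$ and $\CP_2=\MORPH\CP_1\MORPH^{T}$ so that $\CP_2\Q_2=\MORPH(\CP_1\Q_1)\MORPH^{-1}$ is similar to $\CP_1\Q_1$. Your care in matching the right transformation matrix to $\mathbf{O}$ versus $\mathbf{C}$ is exactly the point of the paper's choice of $M$ in that theorem.
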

 
For an \LSS\ $\Sigma$, define $\gamma(\Sigma)=\inf\{\gamma > 0 \mid
\mathbf{G}_{\gamma}(\Sigma) \ne \emptyset\}$. Then clearly the
$\mathcal{L}_2$ norm of the input-output map of $\Sigma$ is at most
$\gamma(\Sigma)$.  From Theorem \ref{inv:theo}, we obtain that

\begin{Corollary}~\\
\vspace{-.5cm}
\begin{itemize}
\item For any minimal \LSS\ $\Sigma_m$ which is equivalent to
$\Sigma$, $\gamma(\Sigma_m) \le \gamma(\Sigma)$.
\item If $\Sigma_i$, $i=1,2$, are two minimal and equivalent \SLSS,
then $\gamma(\Sigma_1)=\gamma(\Sigma_2)$.
\end{itemize}
\end{Corollary}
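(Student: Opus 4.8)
The corollary is a direct consequence of Theorem~\ref{inv:theo} applied with the choice $\mathbf{K}=\mathbf{G}_{\gamma}$, carried out separately for each fixed $\gamma > 0$. For an \LSS\ $\Sigma$ write $S_{\Sigma}=\{\gamma > 0 \mid \mathbf{G}_{\gamma}(\Sigma)\ne\emptyset\}$, so that by definition $\gamma(\Sigma)=\inf S_{\Sigma}$ (with the convention $\inf\emptyset=+\infty$). The plan is simply to compare the index sets $S_{\Sigma}$ across equivalent minimal representations; everything reduces to an inclusion, respectively an equality, of these sets.

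For the first item, fix $\gamma\in S_{\Sigma}$, i.e. $\mathbf{G}_{\gamma}(\Sigma)\ne\emptyset$. By part~(1) of Theorem~\ref{inv:theo} with $\mathbf{K}=\mathbf{G}_{\gamma}$, any minimal \LSS\ $\Sigma_{\mathrm m}$ equivalent to $\Sigma$ also satisfies $\mathbf{G}_{\gamma}(\Sigma_{\mathrm m})\ne\emptyset$, so $\gamma\in S_{\Sigma_{\mathrm m}}$. Hence $S_{\Sigma}\subseteq S_{\Sigma_{\mathrm m}}$, and taking infima gives $\gamma(\Sigma_{\mathrm m})=\inf S_{\Sigma_{\mathrm m}}\le\inf S_{\Sigma}=\gamma(\Sigma)$.

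For the second item I would invoke part~(3) of Theorem~\ref{inv:theo}. Since $\Sigma_1$ and $\Sigma_2$ are minimal and equivalent, Theorem~\ref{Theo:min} supplies an \LSS\ isomorphism $\MORPH:\Sigma_1\to\Sigma_2$, and part~(3) (with $\mathbf{K}=\mathbf{G}_{\gamma}$, hence $M=\MORPH^{-1}$) yields, for every $\gamma>0$, the equivalence $P\in\mathbf{G}_{\gamma}(\Sigma_1)\iff M^{T}PM\in\mathbf{G}_{\gamma}(\Sigma_2)$. As $M$ is non-singular, $P\mapsto M^{T}PM$ is a bijection of the positive definite cone onto itself, so $\mathbf{G}_{\gamma}(\Sigma_1)\ne\emptyset\iff\mathbf{G}_{\gamma}(\Sigma_2)\ne\emptyset$. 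Therefore $S_{\Sigma_1}=S_{\Sigma_2}$ and $\gamma(\Sigma_1)=\gamma(\Sigma_2)$. Alternatively, the second item follows from the first applied twice, since each of $\Sigma_1,\Sigma_2$ is a minimal representation equivalent to the other.

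There is no real obstacle here: all the substance is in Theorem~\ref{inv:theo}, which is assumed available. The only points worth stating explicitly are that $\mathbf{G}_{\gamma}$ is one of the admissible symbols for $\mathbf{K}$ in that theorem, and that the argument is performed one value of $\gamma$ at a time, so no uniformity in $\gamma$ is required before passing to the infimum. One could additionally note (though the corollary does not assert it) that $S_{\Sigma}$ is an interval unbounded above, since if $\mathbf{G}_{\gamma_0}(q,\Sigma,P)<0$ for all $q$ then the same $P$ works for every $\gamma\ge\gamma_0$ by a Schur-complement monotonicity argument on the $-\gamma^2 I$ block; but this is not needed for the stated result.
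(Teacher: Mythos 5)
Your proof is correct and follows exactly the route the paper intends: the corollary is stated as an immediate consequence of Theorem~\ref{inv:theo}, with the first item coming from part~(1) applied to $\mathbf{K}=\mathbf{G}_{\gamma}$ for each fixed $\gamma$ and the second from part~(3) (or, as you note, from applying the first item in both directions). Your explicit bookkeeping with the sets $S_{\Sigma}$ and the passage to infima is a clean write-up of the same argument.
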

As a consequence, the number $\gamma(\Sigma)$, where $\Sigma$ is
minimal, depends only on the input-output map of $Y^{\Sigma}$.  Note
that $\gamma(\Sigma)$ can be computed by solving a classical
optimization problem.

\begin{proof}[Proof of Theorem \ref{inv:theo}]
The proof for both the discrete- and the continuous-time case are the
same, hence we present both cases together.

The proof of the last part of the theorem follows by an easy
computation and by recalling that if $\Sigma_1$ and $\Sigma_2$ are two
equivalent and minimal \SLSS, then they are related by an \LSS\
isomorphism.

Next, we prove the first statement of the theorem.
In order to make the proof easier, we denote by $\mathbf{\hat{C}}(\Sigma)$ the set of all inverses of elements of $\mathbf{C}(\Sigma)$.
Clearly, showing that $\mathbf{\hat{C}}(\Sigma) \ne \emptyset \implies \mathbf{\hat{C}}(\Sigma_{\mathrm m}) \ne \emptyset$ is equivalent to showing $\mathbf{\hat{C}}(\Sigma) \ne \emptyset \implies \mathbf{\hat{C}}(\Sigma_{\mathrm m}) \ne \emptyset$.
Hence, in the sequel, we will show that
$\mathbf{K}(\Sigma) \ne \emptyset \implies \mathbf{K}(\Sigma_{\mathrm m}) \ne \emptyset$ for
$\mathbf{K} \in \{\mathbf{S},\mathbf{\hat{C}},\mathbf{O},\mathbf{G}_{\gamma}\}$.
To this end, it is enough to show that if $\mathbf{K}(\Sigma) \ne \emptyset$
and we apply Procedures \ref{LSSreach}--\ref{LSSobs} to obtain a
minimal \LSS\ $\Sigma_{\mathrm m}$, then $\mathbf{K}(\Sigma_{\mathrm m}) \ne \emptyset$. 



First, we show that the application of Procedure \ref{LSSreach}
preserves the non-emptiness of $\mathbf{K}(\Sigma)$.  Recall the
partitioning of $A_q$ from \eqref{LSSreach:eq1} and consider the
corresponding partitioning of $P$
\[ P=\begin{bmatrix} P_{11} & P_{12} \\ P_{21} & P_{22} \end{bmatrix}.
\]

A simple computation reveals that
\begin{Lemma}
\label{inv:theo:lemma2}
If $P \in \mathbf{K}(\Sigma)$, then $P_{11} \in \mathbf{K}(\Sigma_r)$
for $\mathbf{K} \in \{\mathbf{S},\mathbf{G}_{\gamma},\mathbf{O},\mathbf{\hat{C}}\}$.
\end{Lemma}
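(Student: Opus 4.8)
The plan is to extract everything from the block structure that Procedure~\ref{LSSreach} imposes, namely $A_q=\begin{bmatrix} A_{q}^{\mathrm R} & A'_{q} \\ 0 & A''_{q}\end{bmatrix}$, $B_q=\begin{bmatrix} B_q^{\mathrm R}\\ 0\end{bmatrix}$, $C_q=\begin{bmatrix} C_q^{\mathrm R} & C'_q\end{bmatrix}$ from \eqref{LSSreach:eq1}, together with the conformal block decomposition $P=\begin{bmatrix} P_{11} & P_{12}\\ P_{21} & P_{22}\end{bmatrix}$ of the symmetric matrix $P$. First I would record the elementary fact that $P>0$ forces its leading block $P_{11}>0$ (restrict the quadratic form $v\mapsto v^TPv$ to vectors supported on the first $r$ coordinates), which disposes of the positive-definiteness requirement in all four cases.

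Next, for $\mathbf{K}\in\{\mathbf{S},\mathbf{O},\mathbf{G}_{\gamma}\}$ I would substitute the partitioned matrices into the defining expression $\mathbf{K}(q,\Sigma,P)$ and look at the submatrix indexed by the first $r$ state coordinates (together with the whole input block in the $\mathbf{G}_{\gamma}$ case). The key observation, which works uniformly in continuous- and discrete-time, is that the vanishing lower-left block of $A_q$, the form of the partition of $C_q$, and (where $B_q$ enters) its vanishing lower block together annihilate every term that couples $P_{11}$ to $P_{12}$, $P_{21}$, $P_{22}$ or to $A'_q$, $A''_q$, so that what remains is exactly $\mathbf{K}(q,\Sigma_r,P_{11})$. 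In other words $\mathbf{K}(q,\Sigma_r,P_{11})$ is the principal submatrix of $\mathbf{K}(q,\Sigma,P)$ obtained by striking the last $n-r$ state rows and columns. Since a principal submatrix of a negative definite (resp.\ negative semidefinite) matrix is again negative definite (resp.\ negative semidefinite), the hypothesis $\mathbf{K}(q,\Sigma,P)<0$ (resp.\ $\le 0$) for all $q\in Q$ yields $\mathbf{K}(q,\Sigma_r,P_{11})<0$ (resp.\ $\le 0$) for all $q\in Q$, i.e.\ $P_{11}\in\mathbf{K}(\Sigma_r)$.

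I expect the case $\mathbf{K}=\mathbf{\hat{C}}$ to be the main obstacle, and it is exactly the reason $\mathbf{\hat{C}}(\Sigma)$ was introduced: the leading block of the controllability expression $\mathbf{C}(q,\Sigma,\CP)$ picks up an uncontrollable cross term involving $A'_q$ and the off-diagonal blocks of $\CP$, so the naive restriction fails for $\mathbf{C}$ itself. To handle $\mathbf{\hat{C}}$, start from $P\in\mathbf{\hat{C}}(\Sigma)$, so that $\CP:=P^{-1}\in\mathbf{C}(\Sigma)$. In continuous-time I would pre- and post-multiply $\mathbf{C}(q,\Sigma,\CP)\le 0$ by $P$ to obtain the equivalent Riccati-type inequality $A_q^TP+PA_q+PB_qB_q^TP\le 0$; its leading $r\times r$ block is, once more by the block-triangular form of $A_q$ and the zero lower block of $B_q$, exactly $(A_q^{\mathrm R})^TP_{11}+P_{11}A_q^{\mathrm R}+P_{11}B_q^{\mathrm R}(B_q^{\mathrm R})^TP_{11}$, hence $\le 0$; pre/post-multiplying this block by $P_{11}^{-1}$ gives $A_q^{\mathrm R}P_{11}^{-1}+P_{11}^{-1}(A_q^{\mathrm R})^T+B_q^{\mathrm R}(B_q^{\mathrm R})^T\le 0$, i.e.\ $P_{11}^{-1}\in\mathbf{C}(\Sigma_r)$ and therefore $P_{11}\in\mathbf{\hat{C}}(\Sigma_r)$.

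In discrete-time the plain conjugation above does not eliminate $\CP$, so instead I would first use a Schur complement to rewrite $\mathbf{C}(q,\Sigma,\CP)\le 0$ as the positive semidefiniteness of a $(2n+m)\times(2n+m)$ block matrix built from $\CP$, $A_q\CP$, $B_q$ and the identity; then conjugate by $\diag(P,P,I)$, which collapses the $\CP$-entries via $\CP P=I$; then pass to the principal submatrix keeping the first $r$ coordinates of each of the two state blocks and the whole input block; and finally undo the Schur complement for $\Sigma_r$ to recognize the result as $P_{11}^{-1}\in\mathbf{C}(\Sigma_r)$, whence $P_{11}\in\mathbf{\hat{C}}(\Sigma_r)$. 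Modulo routine bookkeeping, this inversion/Schur-complement manoeuvre — forced by the fact that reachability reduction restricts grammians along the ``wrong'' block for controllability — is the only delicate point in the lemma, and making its discrete-time version clean is where the real work lies.
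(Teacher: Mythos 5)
Your proposal is correct and follows essentially the same route as the paper: exploit the block-triangular structure from Procedure \ref{LSSreach} so that $\mathbf{K}(q,\Sigma_r,P_{11})$ appears as a principal submatrix of $\mathbf{K}(q,\Sigma,P)$ for $\mathbf{K}\in\{\mathbf{S},\mathbf{O},\mathbf{G}_{\gamma}\}$ (plus $P_{11}>0$ from $P>0$), and treat $\mathbf{\hat{C}}$ by first converting the inequality in $\CP=P^{-1}$ into an equivalent inequality in $P$ before restricting and converting back. The only immaterial difference is in that last step: the paper handles both time domains uniformly by invoking Lemma \ref{Claim0} to rewrite $\mathbf{C}(q,\Sigma,P^{-1})\le 0$ as $\mathbf{G}_{1}(q,\Sigma,P)$ minus the output term being $\le 0$ and then reuses the $\mathbf{G}$-type restriction, whereas you use a direct Riccati congruence in continuous time and a $(2n+m)\times(2n+m)$ Schur-complement form conjugated by $\diag(P,P,I)$ in discrete time, which are equivalent congruence/Schur manipulations leading to the same conclusion.
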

The proof of Lemma \ref{inv:theo:lemma2} can be found in \ref{appl3}.

Next, we show that Procedure \ref{LSSobs} preserves non-emptiness of
$\mathbf{K}(\Sigma)$.  We will use the duality between observability
and reachability, explained in Remark \ref{rem:dual}. Define the dual system
$\Sigma^T=(n,Q,\{(A_q^T,C_q^T,B_q^T) \mid q \in Q\})$.  The following
properties of the dual system $\Sigma^T$ hold.
\begin{Lemma}
\label{inv:theo:lemma3}~\\
\vspace{-.5cm}
\begin{enumerate}
\item For $P \in \mathbf{S}(\Sigma) \iff P^{-1} \in
\mathbf{S}(\Sigma^T)$,
\item
 $P \in \mathbf{O}(\Sigma) \iff P^{-1} \in \mathbf{\hat{C}}(\Sigma^T)$ and
 $P \in \mathbf{\hat{C}}(\Sigma) \iff P^{-1} \in \mathbf{O}(\Sigma^T)$.
\item
$P \in \mathbf{G}_{\gamma}(\Sigma) \iff \gamma^{2}P^{-1} \in
\mathbf{G}_{\gamma}(\Sigma^T).$
\end{enumerate}
\end{Lemma}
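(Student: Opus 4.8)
The plan is to prove each of the three equivalences by elementary congruence and Schur--complement manipulations, working with one fixed mode $q\in Q$ at a time (all the inequalities defining $\mathbf{S}$, $\mathbf{O}$, $\mathbf{C}$, $\mathbf{G}_\gamma$ are quantified over $q$). Since $(\Sigma^T)^T=\Sigma$, in each part it is enough to prove one implication; the converse follows by applying the same statement to $\Sigma^T$ with $P$ replaced by $P^{-1}$ (resp.\ $\gamma^2 P^{-1}$). I would first observe that part (2) is essentially definitional: substituting $(A_q,B_q,C_q)\mapsto(A_q^T,C_q^T,B_q^T)$ into the defining expression $\mathbf{O}(q,\cdot,P)$ produces exactly $\mathbf{C}(q,\Sigma,P)$, and into $\mathbf{C}(q,\cdot,P)$ produces exactly $\mathbf{O}(q,\Sigma,P)$, in both continuous- and discrete-time. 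Hence $\mathbf{O}(\Sigma^T)=\mathbf{C}(\Sigma)$ and $\mathbf{C}(\Sigma^T)=\mathbf{O}(\Sigma)$ as sets of matrices, and since $\mathbf{\hat{C}}(\cdot)=\{P>0\mid P^{-1}\in\mathbf{C}(\cdot)\}$ by definition, part (2) follows at once.

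For part (1) in continuous-time, the inequality $A_q^T P+P A_q<0$ under congruence by the symmetric invertible matrix $P^{-1}$ becomes $A_q P^{-1}+P^{-1}A_q^T<0$, which is precisely $\mathbf{S}(q,\Sigma^T,P^{-1})<0$. In discrete-time I would first linearize the Stein inequality: by a Schur complement with the strictly negative pivot $-P$, the inequality $A_q^T P A_q-P<0$ is equivalent to $\begin{bmatrix} -P & A_q^T P\\ P A_q & -P\end{bmatrix}<0$; congruence by $\diag(P^{-1},P^{-1})$ turns this into $\begin{bmatrix} -P^{-1} & P^{-1}A_q^T\\ A_q P^{-1} & -P^{-1}\end{bmatrix}<0$, and a Schur complement with respect to the $(1,1)$-block yields $A_q P^{-1}A_q^T-P^{-1}<0$, that is, $\mathbf{S}(q,\Sigma^T,P^{-1})<0$.

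Part (3) uses the same idea with somewhat heavier bookkeeping. In continuous-time, a Schur complement of $\mathbf{G}_\gamma(q,\Sigma,P)<0$ with respect to its always-negative block $-\gamma^2 I$ gives the Riccati-type inequality $A_q^T P+P A_q+C_q^T C_q+\gamma^{-2}P B_q B_q^T P<0$; congruence by $P^{-1}$ followed by multiplication by $\gamma^2$ gives $\gamma^2(A_q P^{-1}+P^{-1}A_q^T)+B_q B_q^T+\gamma^2 P^{-1}C_q^T C_q P^{-1}<0$, which, after dividing by $\gamma^2$, is exactly the Schur-complement form of $\mathbf{G}_\gamma(q,\Sigma^T,\gamma^2 P^{-1})<0$. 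In discrete-time I would write $\mathbf{G}_\gamma(q,\Sigma,P)=\Phi_q^T W\Phi_q-V$ with $\Phi_q=\begin{bmatrix} A_q & B_q\\ C_q & 0\end{bmatrix}$, $W=\diag(P,I)$ and $V=\diag(P,\gamma^2 I)$ (the identity blocks carrying the output and input sizes of $\Sigma$, which are swapped for $\Sigma^T$), and then invoke the dualization identity obtained from the two Schur complements of $\begin{bmatrix} V & \Phi_q^T\\ \Phi_q & W^{-1}\end{bmatrix}>0$, namely $V-\Phi_q^T W\Phi_q>0\iff W^{-1}-\Phi_q V^{-1}\Phi_q^T>0$. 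Multiplying the right-hand inequality by $\gamma^2$ and noting $\gamma^2 W^{-1}=\diag(\gamma^2 P^{-1},\gamma^2 I)$ and $\gamma^2 V^{-1}=\diag(\gamma^2 P^{-1},I)$ identifies it with $\mathbf{G}_\gamma(q,\Sigma^T,\gamma^2 P^{-1})<0$, since the ``$\Phi$-matrix'' of $\Sigma^T$ is $\Phi_q^T$.

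The algebra is routine; the one delicate point, which I expect to be the main obstacle, is the discrete-time $\mathbf{G}_\gamma$ case, because there $P$ enters every block of the LMI quadratically, so no direct congruence linearizes anything. The device that makes it work is to keep $\mathbf{G}_\gamma$ in the factored form $\Phi_q^T W\Phi_q-V$ and apply the Schur-complement dualization lemma above, which remains valid for the non-square $\Phi_q$; the swap of input and output dimensions between $\Sigma$ and $\Sigma^T$ is then absorbed automatically by the block structure of $W$ and $V$. Throughout, all Schur complements are taken with strictly definite pivot blocks and all congruences are by invertible matrices, so the strict (resp.\ non-strict) definiteness required in the definitions of $\mathbf{S}$ and $\mathbf{G}_\gamma$ (resp.\ $\mathbf{O}$ and $\mathbf{C}$) is preserved at every step.
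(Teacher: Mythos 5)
Your proposal is correct and follows essentially the same route as the paper: part (2) by the definitional identities $\mathbf{O}(\Sigma^T)=\mathbf{C}(\Sigma)$, $\mathbf{C}(\Sigma^T)=\mathbf{O}(\Sigma)$, part (1) and the continuous-time part (3) by the same congruence/Schur-complement manipulations (the paper packages these in its auxiliary Lemma~\ref{Claim0}), and the discrete-time part (3) by recasting $\mathbf{G}_\gamma$ as a Stein-type inequality for an augmented non-square block matrix and dualizing it, which is exactly the paper's $\hat{A}_q$, $\hat{P}$ construction with the $1/\gamma$ weights kept in $W$ and $V$ instead of absorbed into the augmented matrix. If anything, your explicit remark that the dualization identity holds for non-square $\Phi_q$ with the swapped input/output identity blocks treats the dimension bookkeeping more carefully than the paper's own write-up.
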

The proof of Lemma \ref{inv:theo:lemma3} can be found in
\ref{appl4}.  

As a consequence, $\mathbf{K}(\Sigma) \ne \emptyset$ if and only if
$\mathbf{K}(\Sigma^T) \ne \emptyset$ for $\mathbf{K} \in \{\mathbf{S},\mathbf{G}_{\gamma}\}$,
and $\mathbf{O}(\Sigma) \ne \emptyset \implies \mathbf{\hat{C}}(\Sigma^T) \ne \emptyset$,
and $\mathbf{\hat{C}}(\Sigma) \ne \emptyset \implies \mathbf{O}(\Sigma^T) \ne \emptyset$.
  From the definition of duality
it follows that if $\Sigma_{\mathrm rt}$ is the result of applying
Procedure \ref{LSSreach} to $\Sigma^T$, then $\Sigma_{\mathrm{rt}}^T=
\Sigma_{\mathrm o}$, where $\Sigma_{\mathrm o}$ is the result of
application of Procedure \ref{LSSobs} to $\Sigma$.  Since Procedure
\ref{LSSreach} preserves non-emptiness of $\mathbf{K}(\Sigma^T)$,
$\mathbf{K} \in \{\mathbf{S},\mathbf{G}_{\gamma},\mathbf{O},\mathbf{\hat{C}}\}$, we
have that
$\mathbf{K}(\Sigma) \ne \emptyset \implies \mathbf{K}(\Sigma^T) \ne \emptyset \implies \mathbf{K}(\Sigma_{\mathrm{rt}})  \ne \emptyset \implies \mathbf{K}(\Sigma_{\mathrm o}) \ne \emptyset$,
$\mathbf{K} \in \{\mathbf{S},\mathbf{G}_{\gamma}\}$, and
$\mathbf{\hat{C}}(\Sigma) \ne \emptyset \implies \mathbf{O}(\Sigma^T) \ne \emptyset \implies \mathbf{O}(\Sigma_{\mathrm{rt}}) \ne \emptyset \implies \mathbf{\hat{C}}(\Sigma_{\mathrm o}) \ne \emptyset$,  and
$\mathbf{O}(\Sigma) \ne \emptyset \implies \mathbf{\hat{C}}(\Sigma^T) \ne \emptyset \implies \mathbf{\hat{C}}(\Sigma_{\mathrm{rt}}) \ne \emptyset \implies \mathbf{O}(\Sigma_{\mathrm o}) \ne \emptyset$.

Finally, we show the second statement. Without loss of generality, we can 
assume that $\Sigma_m$ is the result of applying Procedure \ref{LSSreach} and
Procedure \ref{LSSobs} to $\Sigma$. Consider 
a matrix pair $(\CP^{'},\Q^{'})$, $(\CP,\Q)$,
$\CP^{'},\Q^{'} \in \mathbb{R}^{k \times k}$
$\CP,\Q \in \mathbb{R}^{n \times n}$, $\CP,\CP^{'},\Q,\Q^{'} > 0$.
Let $\lambda_1 \ge \ldots \ge \lambda_n$ be the eigenvalues of $\CP\Q$ and
let $\lambda^{'}_1 \ge \ldots \ge \lambda^{'}_k$ be the eigenvalues of
$\CP^{'}\Q^{'}$.
We write $(\CP^{'},\Q^{'}) \preceq (\CP,\Q)$, if $k \le n$ and 
$\lambda_{n-k+i} \le \lambda^{'}_i \le \lambda_i$,
$i=1,\ldots,k$.
It is easy to see that $\preceq$ is a  transitive relation.
Hence, it is enough to show that if
we apply Procedure \ref{LSSreach} or Procedure \ref{LSSobs}
to $\Sigma$, then the resulting system will have a pair of controllability and
observability grammians $(\CP^{'},\Q^{'})$, such that 
$(\CP^{'},\Q^{'}) \preceq (\CP,\Q)$. 
From Lemma \ref{inv:theo:lemma3} it follows that it is enough to prove this only
for Procedure \ref{LSSreach}. 

Let $P_{11}$ and $\Q_{11}$ be the
upper left $r \times r$ sub-matrices of $P=\CP^{-1}$ and $\Q$ in the basis described
in Procedure \ref{LSSreach}. Then Lemma \ref{inv:theo:lemma2} implies
that $P^{-1}_{11} \in \mathbf{C}(\Sigma_r)$ and 
$\Q_{11} \in \mathbf{O}(\Sigma_r)$.   
We claim that $(P^{-1}_{11},\Q_{11}) \preceq (\CP,\Q)$. 
Indeed, notice that the eigenvalues of $\CP\Q$ and $P^{-1}_{11}\Q_{11}$ are
exactly the characteristic values of the regular matrix pencils
$(\Q-\lambda \CP^{-1})$ and  
$(\Q_{11}-\lambda P_{11})$ respectively, see \cite[Chapter X.\S6]{GantmacherBook}.
 But  $(\Q_{11}-\lambda P_{11})$ is just the pencil $(\Q-\lambda \CP^{-1})$ with
 $n-r$ independent linear 
 constraints which describe the orthogonal complement of $\mathcal{V}^{*}(\Sigma)$.
 Then from \cite[Chapter X.\S7, Theorem 14]{GantmacherBook}, 
it follows that $(P_{11}^{-1},\Q_{11}) \preceq (\CP,\Q)$.
\end{proof}

\section{System-theoretic interpretation of grammians and their
  singular values}
\label{sys:interp}
In this section, we provide a system theoretic interpretation of
grammians and their singular value. To this end, we link them to observability,
span-reachability and Hankel-norms.

\begin{Theorem}
\label{BalancClaim4}
If $\CP$ be a positive semi-definite matrix 
which satisfies \eqref{ContObsGramall} and
$\Sigma$ is span-reachable, then $\CP$ is positive definite.
Similarly, if $\Q$ is a positive semi-definite matrix
which satisfies \eqref{ContContrGramall} 
and $\Sigma$ is observable, then $\Q$ is positive definite. 
\end{Theorem}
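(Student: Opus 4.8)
The plan is to reduce each claim to showing that the null space of the candidate grammian is trivial: a positive semi-definite matrix with trivial kernel is positive definite, so this is all that is needed. The crucial observation is that this null space (or, for the controllability grammian, its orthogonal complement) is invariant under the family $\{A_q\}_{q\in Q}$ (resp. $\{A_q^T\}_{q\in Q}$) and interacts correctly with the input matrices $B_q$ (resp. the output matrices $C_q$). Span-reachability (resp. observability), combined with the descriptions $\IM\mathcal{R}(\Sigma)=\SPAN\{A_vB_qu\mid v\in Q^*,\,q\in Q,\,u\in\mathbb{R}^m\}$ and $\ker\mathcal{O}(\Sigma)=\bigcap_{q\in Q,\,v\in Q^*}\ker C_qA_v$ from Remark \ref{lin:rem1.5}, then forces that subspace to be trivial. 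Moreover the two statements are interchanged by passing to the dual system $\Sigma^T$ of Remark \ref{rem:dual} — one checks that $\mathbf{C}(q,\Sigma^T,P)=\mathbf{O}(q,\Sigma,P)$ for every $q$ (in both time scales), and $\Sigma^T$ is span-reachable iff $\Sigma$ is observable — so it suffices to prove the controllability statement in detail.

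So I would suppose $\CP\ge 0$ satisfies \eqref{ContContrGramall} and $\Sigma$ is span-reachable, and set $N=\ker\CP=\ker\CP^{1/2}$. Fix $q\in Q$ and $x\in N$, and expand $x^T\mathbf{C}(q,\Sigma,\CP)x\le 0$ using $\CP x=0$. In discrete time this reads $\|\CP^{1/2}A_q^Tx\|_2^2+\|B_q^Tx\|_2^2\le 0$, so both summands vanish; in continuous time the two cross terms $x^TA_q\CP x$ and $x^T\CP A_q^Tx$ vanish, leaving $\|B_q^Tx\|_2^2\le 0$, whence $B_q^Tx=0$ and $x^T\mathbf{C}(q,\Sigma,\CP)x=0$, and then, since $\mathbf{C}(q,\Sigma,\CP)\le 0$, this forces $\mathbf{C}(q,\Sigma,\CP)x=0$, which with $\CP x=0$ and $B_q^Tx=0$ gives $\CP A_q^Tx=0$. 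In either case $A_q^Tx\in N$ and $B_q^Tx=0$. Equivalently $N^\perp$ is $A_q$-invariant for every $q$ and contains $\IM B_q$ for every $q$, so $N^\perp$ contains each vector $A_vB_qu$, i.e. $N^\perp\supseteq\IM\mathcal{R}(\Sigma)$. By Theorem \ref{sect:real:lemma1}, span-reachability gives $\IM\mathcal{R}(\Sigma)=\mathbb{R}^n$, hence $N=\{0\}$ and $\CP>0$.

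For the observability statement I would simply apply the above to $\Sigma^T$: a positive semi-definite $\Q$ satisfying the observability inequality \eqref{ContObsGramall} for $\Sigma$ satisfies the controllability inequality \eqref{ContContrGramall} for $\Sigma^T$, and $\Sigma$ observable means $\Sigma^T$ span-reachable (Remark \ref{rem:dual}), so the controllability case yields $\Q>0$. Alternatively one argues directly that $N=\ker\Q$ is $A_q$-invariant and contained in $\ker C_q$ for every $q$, hence $N\subseteq\bigcap_{q,v}\ker C_qA_v=\ker\mathcal{O}(\Sigma)=\{0\}$ by observability.

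I do not expect a genuine obstacle: the argument is elementary linear algebra built on the characterizations in Remark \ref{lin:rem1.5}. The one step needing a little care is extracting $A_q^T$-invariance of $N$ in the continuous-time case — the "diagonal'' part of the inequality only delivers $B_q^Tx=0$, and one must additionally invoke the standard fact that a negative semi-definite quadratic form vanishing at $x$ forces the matrix itself to annihilate $x$. The other thing to keep straight is the bookkeeping of which subspace ($N$ or $N^\perp$) is invariant under which of $A_q$, $A_q^T$, so that it lines up with the precise shape of $\mathcal{R}(\Sigma)$ (resp. $\mathcal{O}(\Sigma)$).
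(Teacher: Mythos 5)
Your proof is correct, but it follows a genuinely different route from the paper's. You work purely algebraically: you show that $\ker\CP$ is $A_q^T$-invariant and annihilated by $B_q^T$ (using, in continuous time, the standard fact that a negative semi-definite matrix whose quadratic form vanishes at $x$ must annihilate $x$), and then invoke the characterizations $\IM\mathcal{R}(\Sigma)=\SPAN\{A_vB_qu\}$ and $\ker\mathcal{O}(\Sigma)=\bigcap_{q,v}\ker C_qA_v$ together with the rank criteria of Theorem \ref{sect:real:lemma1} to force the kernel to be trivial; the other half is obtained either by the explicit duality computation $\mathbf{C}(q,\Sigma^T,P)=\mathbf{O}(q,\Sigma,P)$ or by a symmetric direct argument. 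The paper instead argues system-theoretically: it uses Lemma \ref{BalancClaim1} to interpret $x^T\Q x$ as a bound on the output energy of the zero-input response started at $x$, so $x^T\Q x=0$ makes $x$ indistinguishable from the origin, contradicting observability as defined through input-output maps; the controllability half is then dispatched ``by duality.'' Your version buys elementary self-containedness (no trajectory or energy argument, both time scales handled uniformly, and the duality step made fully explicit), while the paper's version reuses Lemma \ref{BalancClaim1}, which is needed anyway for the Hankel-norm bound, and stays closer to the section's theme of system-theoretic interpretation of grammians. Note also that you have implicitly read the theorem in its intended pairing (controllability inequality with span-reachability, observability inequality with observability), which is what the paper's own proof establishes; the pairing as literally printed in the statement appears to have the two LMIs interchanged.
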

The proof of Theorem \ref{BalancClaim4} is based on the following
results, which are interesting on their own right.

\begin{Lemma}
\label{BalancClaim1}
Assume that $\Q \ge 0$ satisfies $\forall q \in Q: \mathbf{O}(q,\Sigma,\Q) \le 0$.  
Then for
all $q \in \mathcal{Q}$, $t > 0$
\begin{align*}
x^T\Q x \ge
\begin{cases}
\int_{0}^{t} ||Y_{x}^{\Sigma}(0,q)(s)||^2_{2}ds&\text{(cont.)}\\
\sum_{s=0}^{t} ||Y_{x}^{\Sigma}(0,q)(s)||^2_{2}&\text{(disc.)}
\end{cases}
\end{align*}
\end{Lemma}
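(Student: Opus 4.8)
The plan is to prove the inequality by exhibiting $x^T\Q x$ as a telescoping (discrete) or Newton–Leibniz (continuous) sum of a function that decreases along trajectories, and then bounding the leftover terms by zero using positive semi-definiteness of $\Q$.

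First I would fix $q \in \mathcal{Q}$ and $t > 0$, set $u \equiv 0$, and write $x(s) = X^{\Sigma}_x(0,q)(s)$ for the state trajectory and $y(s) = Y^{\Sigma}_x(0,q)(s) = C_{q(s)}x(s)$ for the output. Define the ``storage function'' along the trajectory by $V(s) = x(s)^T \Q x(s)$. In the discrete-time case, from $x(s+1) = A_{q(s)}x(s)$ one computes $V(s+1) - V(s) = x(s)^T(A_{q(s)}^T \Q A_{q(s)} - \Q)x(s)$, and the observability grammian inequality \eqref{ContObsGramd} with $q = q(s)$ gives $A_{q(s)}^T\Q A_{q(s)} - \Q \le -C_{q(s)}^T C_{q(s)}$, hence $V(s+1) - V(s) \le -\|y(s)\|_2^2$. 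Summing this from $s=0$ to $s=t-1$ yields $V(t) - V(0) \le -\sum_{s=0}^{t-1}\|y(s)\|_2^2$, i.e. $x^T\Q x = V(0) \ge V(t) + \sum_{s=0}^{t-1}\|y(s)\|_2^2 \ge \sum_{s=0}^{t}\|y(s)\|_2^2 - \|y(t)\|_2^2 + V(t)$; one then either adjusts the summation index or, more cleanly, sums from $0$ to $t$ after noting $V(t+1)\ge 0$, to get exactly $x^T\Q x \ge \sum_{s=0}^{t}\|y(s)\|_2^2$ (using $V \ge 0$ everywhere since $\Q \ge 0$).

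In the continuous-time case the argument is the analogue with $\delta$ the derivative: for $q \in PC(T,Q)$ the trajectory $x(\cdot)$ is absolutely continuous, so $V(s) = x(s)^T\Q x(s)$ is absolutely continuous and for almost every $s$ we have $\dot V(s) = x(s)^T(A_{q(s)}^T\Q + \Q A_{q(s)})x(s)$; the inequality \eqref{ContObsGramc} gives $A_{q(s)}^T\Q + \Q A_{q(s)} \le -C_{q(s)}^T C_{q(s)}$, so $\dot V(s) \le -\|y(s)\|_2^2$ a.e. Integrating from $0$ to $t$ gives $V(t) - V(0) \le -\int_0^t \|y(s)\|_2^2\,ds$, whence $x^T\Q x = V(0) \ge V(t) + \int_0^t\|y(s)\|_2^2\,ds \ge \int_0^t\|y(s)\|_2^2\,ds$, again using $V(t) = x(t)^T\Q x(t) \ge 0$. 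The second statement of the lemma (about $\Q$ in $\mathbf{C}(\Sigma^T)$-type situations) is not part of this lemma as stated; only the observability-grammian bound needs proof here, so I would not address it.

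The only genuine subtlety — and the step I would flag as the main obstacle — is regularity in continuous time: one must be sure that $V$ is absolutely continuous so that the fundamental theorem of calculus applies, and that differentiating $x(s)^T\Q x(s)$ via the chain rule is legitimate even though $q(\cdot)$ is only piecewise continuous and the right-hand side $A_{q(s)}x(s)$ is only piecewise continuous in $s$. This is handled by the observation that $x \in AC(T,X)$ by definition of a solution, so $s \mapsto x(s)^T\Q x(s)$ is a composition of an absolutely continuous map with a smooth (quadratic) map and is therefore absolutely continuous on compact intervals, with $\dot V(s) = 2x(s)^T\Q\dot x(s)$ for a.e. $s$; substituting $\dot x(s) = A_{q(s)}x(s)$ (valid a.e.) finishes it. Everything else is a one-line telescoping/integration and the sign bookkeeping to land exactly on the claimed index range.
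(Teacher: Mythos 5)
Your proposal is correct and follows essentially the same route as the paper's proof: treat $x(s)^T\Q x(s)$ as a storage function, bound its increment (or derivative) by $-\|y(s)\|_2^2$ using the grammian inequality, then telescope/integrate and drop the nonnegative terminal term. Your index bookkeeping in the discrete case (summing to $t$ and using $V(t+1)\ge 0$) and your remark on absolute continuity of $V$ in continuous time are fine refinements of the argument the paper gives more tersely.
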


\begin{Lemma}
\label{BalancClaim3}
Assume that $\CP > 0$ is a solution to $\forall q \in Q: \mathbf{C}(q,\Sigma,\CP) \le 0$.  Then
for all $(u,q)\in\mathcal{U}\times\mathcal{Q}$, and $t > 0$
\[ x^T\CP^{-1}x \le
\begin{cases}
\int_{0}^{t} ||u(s)||_{2}^{2}ds\\
\sum_{k=0}^{t} ||u_k||_{2}^{2}~,
\end{cases}
\]
where $x=X^{\Sigma}_{0}(u,q)(t)$.
\end{Lemma}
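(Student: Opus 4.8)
The plan is to establish the inequality first in discrete-time by a telescoping-sum argument, then indicate the analogous continuous-time computation via integration. Fix $(u,q) \in \mathcal{U} \times \mathcal{Q}$ and $t > 0$, and set $x_k = X^{\Sigma}_{0}(u,q)(k)$ for $k = 0, \ldots, t$, so that $x_0 = 0$ and $x_{k+1} = A_{q(k)} x_k + B_{q(k)} u_k$, with $x = x_t$. The key is to track the quantity $x_k^T \CP^{-1} x_k$ along the trajectory. From the discrete-time controllability inequality \eqref{ContContrGramd}, namely $A_q \CP A_q^T + B_q B_q^T - \CP \le 0$ for all $q \in Q$, I would left- and right-multiply by $\CP^{-1}$ to obtain $\CP^{-1} A_q \CP A_q^T \CP^{-1} + \CP^{-1} B_q B_q^T \CP^{-1} - \CP^{-1} \le 0$; this is the statement that the block matrix $\begin{bmatrix} \CP^{-1} - \CP^{-1}B_q B_q^T \CP^{-1} & \CP^{-1}A_q \\ A_q^T \CP^{-1} & \CP^{-1} \end{bmatrix}$ or a suitable Schur-complement rearrangement is positive semi-definite. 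The cleanest route is probably to show directly that for any vector $z$ and any input value $v$, writing $w = A_q z + B_q v$, one has $w^T \CP^{-1} w \le z^T \CP^{-1} z + v^T v$; this follows because $\CP \ge A_q \CP A_q^T + B_q B_q^T$ implies (via the standard fact that $M \le N$, $N > 0$ gives $N^{-1} \le M^{-1}$ when $M>0$, together with a Schur-complement manipulation on the pencil) the desired one-step contraction estimate.

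Concretely, the one-step bound I want is
\begin{equation*}
(A_q z + B_q v)^T \CP^{-1} (A_q z + B_q v) \le z^T \CP^{-1} z + v^T v .
\end{equation*}
To prove it, consider the matrix $\CP - B_q B_q^T$; from $\CP \ge A_q \CP A_q^T + B_q B_q^T \ge B_q B_q^T$ we may work with $\Pi := A_q \CP A_q^T + B_q B_q^T \le \CP$. Since $\CP^{-1} \le \Pi^{-1}$ when $\Pi > 0$ (and a limiting/regularization argument handles the semi-definite edge case, or one notes $\CP > 0$ forces $\Pi > 0$ only if $A_q$ nonsingular — so instead I would use the Schur complement form of $\Pi \le \CP$ directly), it suffices to bound $(A_q z + B_q v)^T \Pi^{-1}(A_q z + B_q v)$. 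Here I invoke the elementary identity that for $\Pi = A\CP A^T + BB^T$,
\begin{equation*}
(Az+Bv)^T \Pi^{-1}(Az+Bv) = \min\Big\{\, \xi^T \CP^{-1}\xi + \eta^T\eta \;:\; A\xi + B\eta = Az+Bv \,\Big\},
\end{equation*}
which is the familiar variational characterization of a sum-of-outer-products inverse. Taking the feasible choice $\xi = z$, $\eta = v$ yields $(Az+Bv)^T\Pi^{-1}(Az+Bv) \le z^T\CP^{-1}z + v^Tv$, and combined with $\Pi^{-1} \ge \CP^{-1}$ gives exactly the one-step estimate. Summing this telescopically from $k=0$ to $k=t-1$, using $x_0 = 0$, produces $x_t^T \CP^{-1} x_t \le \sum_{k=0}^{t-1} u_k^T u_k \le \sum_{k=0}^{t} \|u_k\|_2^2$, as claimed.

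For the continuous-time case the argument runs in parallel: with $\dot x(s) = A_{q(s)} x(s) + B_{q(s)} u(s)$ and $x(0) = 0$, I would compute $\frac{d}{ds}\big(x(s)^T \CP^{-1} x(s)\big) = x^T(\CP^{-1}A_{q(s)} + A_{q(s)}^T\CP^{-1})x + 2 x^T \CP^{-1} B_{q(s)} u$, and then use the continuous inequality \eqref{ContContrGramc}, $A_q \CP + \CP A_q^T + B_q B_q^T \le 0$, conjugated by $\CP^{-1}$, to bound $x^T(\CP^{-1}A_q + A_q^T\CP^{-1})x \le -x^T\CP^{-1}B_qB_q^T\CP^{-1}x$; completing the square in the cross term $2x^T\CP^{-1}B_q u$ then gives $\frac{d}{ds}(x^T\CP^{-1}x) \le \|u(s)\|_2^2$, and integration from $0$ to $t$ finishes it. The main obstacle I anticipate is the edge case where $\Pi = A_q\CP A_q^T + B_q B_q^T$ is only positive semi-definite (e.g. $A_q$ singular and $\mathrm{Im}\, B_q$ not full): there the inverse $\Pi^{-1}$ does not literally exist and one must instead phrase the one-step bound purely via Schur complements of the inequality $\CP - A_q\CP A_q^T - B_qB_q^T \ge 0$, or perturb $A_q \rightsquigarrow A_q + \varepsilon I$ and pass to the limit; this is a routine but slightly delicate point that the discrete-time telescoping argument must handle carefully, whereas in continuous time the quadratic-form derivative computation sidesteps it entirely.
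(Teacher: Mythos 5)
Your argument is correct and rests on the same underlying idea as the paper's: use $V(x)=x^T\CP^{-1}x$ as a storage function and exploit the controllability inequality conjugated by $\CP^{-1}$. The paper packages this differently: it sets $C_q=0$, applies Lemma \ref{Claim0} (with $A=A_q^T$, $S=B_q^T$) to turn $\mathbf{C}(q,\Sigma,\CP)\le 0$ into $\mathbf{G}_{1}(q,\Sigma_0,\CP^{-1})\le 0$, and then reuses the dissipation computation already carried out in the proof of Lemma \ref{BalancClaim6}, so no new estimate has to be derived; your continuous-time completion-of-squares is literally that computation done by hand, which is fine. Where you diverge is the discrete-time one-step bound: you derive $(A_qz+B_qv)^T\CP^{-1}(A_qz+B_qv)\le z^T\CP^{-1}z+v^Tv$ through the variational characterization of $\Pi^{-1}$ with $\Pi=A_q\CP A_q^T+B_qB_q^T$ together with $\Pi\le\CP\Rightarrow\CP^{-1}\le\Pi^{-1}$, which forces you to worry about $\Pi$ being singular and to invoke a perturbation or limiting argument (note that perturbing $A_q\rightsquigarrow A_q+\varepsilon I$ need not preserve the grammian inequality, so that particular fix is shaky). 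This detour is avoidable: the one-step bound is \emph{equivalent}, by one Schur-complement step (the discrete part of Lemma \ref{Claim0}), to $A_q\CP A_q^T+B_qB_q^T-\CP\le 0$, since the quadratic form $z^T\CP^{-1}z+v^Tv-(A_qz+B_qv)^T\CP^{-1}(A_qz+B_qv)$ is governed by the matrix $\mathrm{diag}(\CP^{-1},I)-\begin{bmatrix}A_q & B_q\end{bmatrix}^T\CP^{-1}\begin{bmatrix}A_q & B_q\end{bmatrix}$, whose positive semi-definiteness is exactly the Schur-complement form of the grammian inequality; this is the ``purely via Schur complements'' alternative you mention, and adopting it makes the edge case disappear and brings your proof in line with the paper's. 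With that adjustment the telescoping/integration steps and the final comparison $\sum_{k=0}^{t-1}\|u_k\|_2^2\le\sum_{k=0}^{t}\|u_k\|_2^2$ are all sound.
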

The proofs of Lemma \ref{BalancClaim1} and \ref{BalancClaim3} can be
found in \ref{appl5} and \ref{appl6}.

\begin{proof}[Proof of Theorem \ref{BalancClaim4}]
We prove the statement for the observability by contradiction. Assume
that there exists $x \in \mathbb{R}^n \setminus \{0\}$ such that
$x^T\Q x = 0$. By Lemma \ref{BalancClaim1}, this implies that for all
$q \in \mathcal{Q}$, the map $Y_x^{\Sigma}(0,q)=0$ and thus
$Y_x^{\Sigma}(0,q)=Y_{0}^{\Sigma}(0,q)$ for all $q$. Note that
$Y_x^{\Sigma}(u,q)=Y_{x}^{\Sigma}(0,q)+Y_{0}^{\Sigma}(u,q)$, and hence
we get that $Y_{x}^{\Sigma}(u,q)=Y_0^{\Sigma}(u,q)$ for all $q$ and
$u$, which contradicts the observability of $\Sigma$.

The statement for controllability grammian follows by duality.
\end{proof}
 
Notice that an \LSS\ may fail to be
observable (resp. reachable), even if \eqref{ContObsGram}
(resp. \eqref{ContContrGram}) has a positive definite
solution, see Example \ref{example1} of Section \ref{sect:model_red}.
This is due to the fact that in \eqref{ContObsGram} and
\eqref{ContContrGram}, we require inequalities instead of equalities.

As we shall see in Section \ref{sect:model_red}, a side effect of
this phenomenon is that the reduced order model obtained by balanced
truncation may fail to be minimal.

As a consequence, one is tempted to ask the question what happens if
in \eqref{ContObsGram} (resp. in \eqref{ContContrGram}), we require
that for some or for all $q \in Q$, $A_{q}^T\Q+\Q A_q+C_q^TC_q=0$
(resp. $\CP A^T_q+A_q\CP+B_qB_q^T=0)$ holds with equality. In this
case, the existence of a strictly positive definite solution to the
equations will imply observability (resp. controllability) of some (or
all) linear subsystems. However, in Remark \ref{lin:rem2}, we have
already explained that for a large class of input-output maps,
including those which are realizable by quadratically stable \SLSS,
there exist no state-space representation such that the local linear
subsystems are reachable or observable.
Hence, by replacing inequalities by
equalities we necessarily restrict applicability of the model
reduction approach.

\subsection{Nice grammians}

As it was mentioned before, existence of a strictly positive definite
controllability and observability grammian does not imply span-reachability
and observability. In fact, there might exist many grammians, and it
is not clear which one of them should be chosen.  In discrete
time, the problem above can partially be circumvented, by using what
we will call \emph{nice grammians}. Nice grammians are special cases
of grammians which have the property that they are unique and their
existence is equivalent to observability and controllability of the
system. Their disadvantage is that they are not preserved by balanced
truncation. However, they are potentially interesting for computational
purposes and as canonical forms in system identification, and for this
reason we discuss them here.

For the formal definition, we introduce the following terminology.
\begin{Definition}[Strong stability]
We call the \LSS\ $\Sigma$ \emph{strongly} \emph{stable}, if the matrix
$\sum_{q \in Q} A_q^T \otimes A_q^T$ is a stable matrix (all its
eigenvalues lie inside the unit disc).
\end{Definition}
From \cite{CostaBook}, we obtain the following result.
\begin{Lemma}
\label{nice_stab:proof}
Consider $n \times n$ matrices $F_q$, $q \in Q$.  If $\sum_{q \in Q}
F_{q}^T \otimes F^T_q$ is a stable matrix, then the equation
\[ P=(\sum_{q \in Q} F^T_qPF_q)+ \mathcal{G} \] has a positive semi-definite solution $P
\ge 0$ for all positive semi-definite $\mathcal{G} \ge 0$.  In fact,
this solution is unique and
\[
P=\sum_{w \in Q^{*}} F^T_{w}\mathcal{G}F_{w}.
\]
Conversely, if the inequality
\[ P-(\sum_{q \in Q} F^T_qPF_q) > 0 \] has a positive definite
solution, then $\sum_{q \in Q} F_{q}^T \otimes F_q^T$ is a stable
matrix.
\end{Lemma}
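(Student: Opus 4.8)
The plan is to vectorise the coupled equation and reduce everything to the single $n^{2}\times n^{2}$ matrix $\mathcal{A}=\sum_{q\in Q}F_{q}^{T}\otimes F_{q}^{T}$. Writing $\mathrm{vec}$ for the column-stacking operator and using $\mathrm{vec}(UXV)=(V^{T}\otimes U)\mathrm{vec}(X)$, the linear map $P\mapsto\sum_{q}F_{q}^{T}PF_{q}$ is carried to multiplication by $\mathcal{A}$, so that $P=\sum_{q}F_{q}^{T}PF_{q}+\mathcal{G}$ becomes $(I-\mathcal{A})\mathrm{vec}(P)=\mathrm{vec}(\mathcal{G})$. Using $(U\otimes U')(V\otimes V')=(UV)\otimes(U'V')$ and the convention $F_{w}=F_{w_{k}}\cdots F_{w_{1}}$ for $w=w_{1}\cdots w_{k}$ (with $F_{\epsilon}=I_{n}$), one checks by induction that $\mathcal{A}^{k}=\sum_{|w|=k}F_{w}^{T}\otimes F_{w}^{T}$ for every $k\ge 0$; this Kronecker bookkeeping is routine but is exactly what makes the closed-form formula appear.

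For the direct implication, assume $\mathcal{A}$ is stable. Then $1$ is not an eigenvalue of $\mathcal{A}$, so $I-\mathcal{A}$ is invertible and the vectorised equation has the unique solution $\mathrm{vec}(P)=(I-\mathcal{A})^{-1}\mathrm{vec}(\mathcal{G})$; transposing the original equation and invoking uniqueness shows $P=P^{T}$, so there is a unique solution and it is symmetric. Since the spectral radius of $\mathcal{A}$ is $<1$, the Neumann series $\sum_{k\ge 0}\mathcal{A}^{k}$ converges to $(I-\mathcal{A})^{-1}$; de-vectorising term by term and inserting the formula for $\mathcal{A}^{k}$ gives $P=\sum_{w\in Q^{*}}F_{w}^{T}\mathcal{G}F_{w}$. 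Every summand is positive semi-definite because $\mathcal{G}\ge 0$, and the limit of a convergent sum of positive semi-definite matrices is positive semi-definite, hence $P\ge 0$.

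For the converse, suppose $P>0$ and $R:=P-\sum_{q}F_{q}^{T}PF_{q}>0$. Substituting the identity $P=\sum_{q}F_{q}^{T}PF_{q}+R$ into itself $N$ times yields, with all terms positive semi-definite,
\[ P=\sum_{|w|=N}F_{w}^{T}PF_{w}+\sum_{k=0}^{N-1}\sum_{|w|=k}F_{w}^{T}RF_{w}. \]
Thus the partial sums $\sum_{k=0}^{N-1}\sum_{|w|=k}F_{w}^{T}RF_{w}$ are nondecreasing in the Loewner order and bounded above by $P$, hence convergent, so their general term $\sum_{|w|=k}F_{w}^{T}RF_{w}\to 0$ as $k\to\infty$. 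As $R>0$ we may pick $\varepsilon>0$ with $R\ge\varepsilon I$, so $0\le\sum_{|w|=k}F_{w}^{T}F_{w}\le\varepsilon^{-1}\sum_{|w|=k}F_{w}^{T}RF_{w}\to 0$, and taking traces, $\sum_{|w|=k}\mathrm{tr}(F_{w}^{T}F_{w})\to 0$. Finally, from $\mathcal{A}^{k}=\sum_{|w|=k}F_{w}^{T}\otimes F_{w}^{T}$ and the fact that the operator norm satisfies $\|U^{T}\otimes U^{T}\|=\|U\|^{2}\le\mathrm{tr}(U^{T}U)$, one gets $\|\mathcal{A}^{k}\|\le\sum_{|w|=k}\mathrm{tr}(F_{w}^{T}F_{w})\to 0$, so $\mathcal{A}^{k}\to 0$, which forces every eigenvalue of $\mathcal{A}$ into the open unit disc.

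The purely mechanical parts are the Kronecker identity $\mathcal{A}^{k}=\sum_{|w|=k}F_{w}^{T}\otimes F_{w}^{T}$ (one must keep the matrix-product order consistent with the paper's convention for $A_{q}$) and the elementary trace/norm estimates. The only point calling for a real decision is the converse: instead of the hands-on iteration above, one could invoke the Perron--Frobenius (Krein--Rutman) theorem for the closed cone of positive semi-definite matrices to produce a nonzero $X\ge 0$ with $\sum_{q}F_{q}XF_{q}^{T}=\rho(\mathcal{A})X$ and then pair it with $R>0$ in the trace inner product, obtaining $(1-\rho(\mathcal{A}))\,\mathrm{tr}(PX)=\mathrm{tr}(RX)>0$ and hence $\rho(\mathcal{A})<1$ in one line; I would nonetheless write out the iterative argument, since it stays elementary and self-contained.
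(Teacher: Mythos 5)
Your proof is correct, and it takes a more self-contained route than the paper, most visibly in the converse direction. For the direct implication the underlying idea is the same — identify $\sum_{q}F_q^T\otimes F_q^T$ with the matrix of the operator $P\mapsto\sum_q F_q^TPF_q$ — but the paper then simply invokes results from the Markov-jump-linear-systems literature (Costa et al., Propositions 2.5--2.6) for existence, uniqueness, positivity and the series formula, whereas you rederive these via invertibility of $I-\mathcal{A}$ and the Neumann series; your Kronecker identity $\mathcal{A}^k=\sum_{|w|=k}F_w^T\otimes F_w^T$ and the de-vectorisation are consistent with the paper's word convention. For the converse the paper argues quite differently: it studies the adjoint operator $\K(V)=\sum_q F_qVF_q^T$ on the cone of positive semi-definite matrices, uses $W(V)=\mathrm{tr}(V^TP)$ as a Lyapunov function together with a general asymptotic-stability theorem (Bhatia--Szeg\H{o}) to get $\K^k(V)\to 0$, and then cites the book again to conclude stability of the matrix. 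You instead iterate the identity $P=\sum_q F_q^TPF_q+R$ to obtain monotone, $P$-bounded partial sums, deduce $\sum_{|w|=k}F_w^TRF_w\to 0$, use $R\ge\varepsilon I$ and the trace/operator-norm estimate $\lVert F_w^T\otimes F_w^T\rVert\le\mathrm{tr}(F_w^TF_w)$ to force $\mathcal{A}^k\to 0$, hence $\rho(\mathcal{A})<1$. Both arguments are sound; yours buys elementary self-containedness (no external dynamical-systems or MJLS citations, and the minor strictness issue in the paper's claimed $W(\K(S))<W(S)$ never arises), while the paper's buys brevity and an explicit link to the stochastic-systems literature it later exploits. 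Your Krein--Rutman remark is also a legitimate one-line alternative for the converse.
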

The proof of Lemma \ref{nice_stab:proof} can be found in 
\ref{appl7}.  Just like quadratic stability, strong stability is preserved
by minimization.
\begin{Lemma}
\label{nice_stab:min}
If $\Sigma$ is strongly stable and $\Sigma_m$ is a minimal \BLSS which
is equivalent to $\Sigma$, then $\Sigma_m$ is strongly stable too.
\end{Lemma}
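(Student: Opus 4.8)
The plan is to reproduce, step by step, the argument used for the first statement of Theorem~\ref{inv:theo}: I would show that strong stability survives each of the two reduction steps of Procedures~\ref{LSSreach} and~\ref{LSSobs}. First I would record the LMI reformulation of strong stability obtained from Lemma~\ref{nice_stab:proof} with $F_q = A_q$: an \LSS\ $\Sigma = \SwitchSysLin$ is strongly stable if and only if there is a matrix $P > 0$ with $P - \sum_{q\in Q} A_q^T P A_q > 0$. Indeed, the ``only if'' part follows by taking $\mathcal{G} = I$ in Lemma~\ref{nice_stab:proof}: the resulting unique solution $P = \sum_{w\in Q^{*}} A_w^T A_w$ satisfies $P \ge A_\epsilon^T A_\epsilon = I > 0$ and $P - \sum_{q\in Q} A_q^T P A_q = I > 0$; the ``if'' part is precisely the converse half of Lemma~\ref{nice_stab:proof}. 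Moreover, all minimal realizations of a given input-output map are isomorphic (Theorem~\ref{Theo:min}), and an \LSS\ isomorphism $S$ sends $\sum_{q\in Q} A_q^T \otimes A_q^T$ to its conjugate by $S^T \otimes S^T$, hence preserves its spectrum; so strong stability is isomorphism-invariant, and since $\Sigma_m$ is isomorphic to the concrete minimal realization $\Sigma^{\mathrm M} = (\Sigma^{\mathrm R})^{\mathrm O}$ produced by Procedure~\ref{LSSmin}, it is enough to prove that $\Sigma^{\mathrm M}$ is strongly stable.

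Next I would handle Procedure~\ref{LSSreach}. Suppose $\Sigma$ is strongly stable and fix $P > 0$ with $P - \sum_{q\in Q} A_q^T P A_q > 0$. Writing $A_q$ in the block form of~\eqref{LSSreach:eq1} and partitioning $P$ into blocks $P_{ij}$ ($i,j=1,2$) conformally with $A_q$, so that $P_{11}$ is $r\times r$, a short calculation exploiting the block upper-triangular shape of $A_q$ shows that the upper-left $r\times r$ block of $\sum_{q\in Q} A_q^T P A_q$ is $\sum_{q\in Q} (A_q^{\mathrm R})^T P_{11} A_q^{\mathrm R}$; hence the upper-left block of $P - \sum_{q\in Q} A_q^T P A_q$ equals $P_{11} - \sum_{q\in Q} (A_q^{\mathrm R})^T P_{11} A_q^{\mathrm R}$. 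Since any principal submatrix of a positive definite matrix is positive definite, we obtain both $P_{11} > 0$ and $P_{11} - \sum_{q\in Q} (A_q^{\mathrm R})^T P_{11} A_q^{\mathrm R} > 0$, so $\Sigma^{\mathrm R}$ is strongly stable. (This is the strong-stability analogue of Lemma~\ref{inv:theo:lemma2}.)

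Then I would dispose of Procedure~\ref{LSSobs} by duality, exactly as in the proof of Theorem~\ref{inv:theo}. Because $(A\otimes B)^T = A^T\otimes B^T$, the matrix $\sum_{q\in Q} A_q\otimes A_q$ is the transpose of $\sum_{q\in Q} A_q^T\otimes A_q^T$ and therefore has the same eigenvalues; consequently $\Sigma$ is strongly stable if and only if its dual $\Sigma^T = (n,Q,\{(A_q^T,C_q^T,B_q^T)\mid q\in Q\})$ is strongly stable. Let $\Sigma_{\mathrm{rt}}$ be the output of Procedure~\ref{LSSreach} applied to $\Sigma^T$; by the previous paragraph $\Sigma_{\mathrm{rt}}$ is strongly stable, and, as already noted in the proof of Theorem~\ref{inv:theo}, $\Sigma_{\mathrm{rt}}^T = \Sigma^{\mathrm O}$, the output of Procedure~\ref{LSSobs} applied to $\Sigma$. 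By self-duality of strong stability, $\Sigma^{\mathrm O}$ is then strongly stable, and more generally $(\Sigma^{\mathrm R})^{\mathrm O}$ is strongly stable whenever $\Sigma^{\mathrm R}$ is. Chaining the two reductions yields that $\Sigma^{\mathrm M} = (\Sigma^{\mathrm R})^{\mathrm O}$ is strongly stable, and hence so is the isomorphic \LSS\ $\Sigma_m$.

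The only mildly technical ingredients are the two identities underlying the two reduction steps — the value of the $(1,1)$-block of $\sum_{q\in Q} A_q^T P A_q$ under the reachability partition, and the relation $\Sigma_{\mathrm{rt}}^T = \Sigma^{\mathrm O}$ — and I expect the first to be the main, though entirely routine, computational point; the second has already been proved and used in the proof of Theorem~\ref{inv:theo}.
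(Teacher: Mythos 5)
Your proposal is correct and follows essentially the same route as the paper's own proof: reduce via isomorphism-invariance of strong stability to the minimal system produced by Procedure~\ref{LSSmin}, show Procedure~\ref{LSSreach} preserves strong stability by extracting the $(1,1)$-block of $P-\sum_{q\in Q}A_q^TPA_q$ and invoking the converse part of Lemma~\ref{nice_stab:proof}, and handle Procedure~\ref{LSSobs} by duality using $\Sigma_{\mathrm{rt}}^T=\Sigma^{\mathrm O}$ and the transpose-invariance of the spectrum of $\sum_{q\in Q}A_q^T\otimes A_q^T$. The only cosmetic difference is that you make the LMI characterization of strong stability explicit (via $\mathcal{G}=I$), whereas the paper cites Lemma~\ref{nice_stab:proof} directly.
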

The proof of Lemma \ref{nice_stab:min} can be found in 
\ref{appl8}.  With the above discussion in mind, we define the concept
of \emph{nice grammians}.
\begin{Definition}
\label{DTniceGram}
Assume that $\Sigma$ is strongly stable. Then the unique positive
semi-definite solutions $\CP$ and $\Q$ to
\begin{eqnarray*}
\CP&=&\sum_{q \in Q} A_q\CP A_q^T+\sum_{q \in Q} B_qB_q^T 
\label{niceContrGram} \\
\Q&=&\sum_{q \in Q} A_q^T\Q A_q+\sum_{q \in Q} C_q^TC_q 
\label{niceObsGram} 
\end{eqnarray*}
are called \emph{nice controllability} and \emph{nice observability}
grammians, respectively.
\end{Definition}
Notice that $A_q\CP A_q^T+B_qB_q^T \le \sum_{\sigma \in Q}
A_{\sigma}\CP A_{\sigma}^T+B_{\sigma}B_{\sigma}^T$ and $A_q^T\Q
A_q^T+C^T_qC_q \le \sum_{\sigma \in Q} A^T_{\sigma}\Q
A_{\sigma}+C^T_{\sigma}C_{\sigma}$.  Hence, nice grammians are indeed
grammians, since they satisfy \eqref{ContContrGram} and
\eqref{ContObsGram} respectively.
\begin{Lemma}
\label{niceGramLemma0}
If $\Sigma$ is strongly stable, then the nice controllability and
observability grammians exists and they are unique.
\end{Lemma}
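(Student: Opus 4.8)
The plan is to obtain both statements as immediate corollaries of Lemma \ref{nice_stab:proof}, applied twice with suitable choices of the matrices $F_q$ and of the inhomogeneous term $\mathcal{G}$.

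First I would treat the nice observability grammian. Take $F_q = A_q$ for every $q \in Q$ and $\mathcal{G} = \sum_{q \in Q} C_q^T C_q$. Then $\mathcal{G} \ge 0$, and the hypothesis of Lemma \ref{nice_stab:proof}, namely that $\sum_{q \in Q} F_q^T \otimes F_q^T = \sum_{q \in Q} A_q^T \otimes A_q^T$ be a stable matrix, is precisely the strong stability of $\Sigma$. Hence Lemma \ref{nice_stab:proof} furnishes a positive semi-definite solution $\Q$ of $P = \sum_{q \in Q} F_q^T P F_q + \mathcal{G}$, that is, of $\Q = \sum_{q \in Q} A_q^T \Q A_q + \sum_{q \in Q} C_q^T C_q$, and asserts this solution is unique; this is exactly the defining equation of the nice observability grammian, so existence and uniqueness follow.

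For the nice controllability grammian I would instead take $F_q = A_q^T$ and $\mathcal{G} = \sum_{q \in Q} B_q B_q^T \ge 0$, so that $F_q^T P F_q = A_q P A_q^T$ and the equation of Lemma \ref{nice_stab:proof} becomes $\CP = \sum_{q \in Q} A_q \CP A_q^T + \sum_{q \in Q} B_q B_q^T$. The only point requiring comment is that the stability hypothesis now reads ``$\sum_{q \in Q} A_q \otimes A_q$ is stable''; I would observe that $\sum_{q \in Q} A_q \otimes A_q = \big(\sum_{q \in Q} A_q^T \otimes A_q^T\big)^T$, since $A_q^T \otimes A_q^T = (A_q \otimes A_q)^T$, and that a square matrix is stable if and only if its transpose is (same spectrum), so this hypothesis is again equivalent to strong stability of $\Sigma$. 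Alternatively, one can invoke Remark \ref{rem:dual}: $\CP$ is the nice observability grammian of the dual system $\Sigma^T$, and $\Sigma^T$ is strongly stable exactly when $\Sigma$ is, so the controllability claim reduces to the observability claim already proved.

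There is no substantial obstacle here; the only care needed is in matching the transpose conventions of the two defining equations with the form $P = \sum_{q \in Q} F_q^T P F_q + \mathcal{G}$ of Lemma \ref{nice_stab:proof}, and in checking that stability of the Kronecker-sum matrix is insensitive to transposition (equivalently, that strong stability is a self-dual property).
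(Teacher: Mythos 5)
Your proposal is correct and is essentially the paper's own proof: the paper likewise applies Lemma \ref{nice_stab:proof} with $F_q=A_q$, $\mathcal{G}=\sum_{q\in Q}C_q^TC_q$ for the observability grammian and with $F_q=A_q^T$, $\mathcal{G}=\sum_{q\in Q}B_qB_q^T$ for the controllability grammian, using the same observation that $(A_q^T\otimes A_q^T)^T=A_q\otimes A_q$ so stability of the Kronecker matrix is unaffected by transposition. The duality remark is a harmless alternative phrasing of the same fact.
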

The proof of Lemma \ref{niceGramLemma0} can be found in
\ref{appl9}.
\begin{Lemma}
\label{niceGramLemma1}
$\Sigma$ is span-reachable if and only if the nice controllability grammian
$\CP$ is strictly positive definite.  $\Sigma$ is observable if and
only if the nice observability grammian is strictly positive definite.
\end{Lemma}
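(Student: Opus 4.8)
The plan is to prove the span-reachability equivalence and then obtain the observability statement by duality, exactly mirroring the structure already used for Theorem~\ref{BalancClaim4}. Recall from Lemma~\ref{nice_stab:proof} that the nice controllability grammian admits the closed form
\[
\CP = \sum_{w \in Q^{*}} A_{w}\Big(\sum_{q \in Q} B_qB_q^T\Big)A_{w}^T = \sum_{w \in Q^{*}} \sum_{q \in Q} (A_{w}B_q)(A_{w}B_q)^T .
\]
This is manifestly positive semi-definite, and the key observation is that for any vector $x \in \mathbb{R}^n$,
\[
x^T\CP x = \sum_{w \in Q^{*}} \sum_{q \in Q} \|B_q^T A_{w}^T x\|_2^2 ,
\]
so $x^T\CP x = 0$ if and only if $x^T A_{w} B_q = 0$ for every $w \in Q^{*}$ and every $q \in Q$, i.e. if and only if $x$ is orthogonal to $\SPAN\{A_vB_qu \mid v \in Q^{*}, q \in Q, u \in \mathbb{R}^m\}$. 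By Remark~\ref{lin:rem1.5}, this span is exactly $\IM \mathcal{R}(\Sigma)$.

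The argument then runs as follows. First I would establish the displayed identity for $x^T\CP x$ by substituting the series formula from Lemma~\ref{nice_stab:proof} with $F_q = A_q$ and $\mathcal{G} = \sum_{q \in Q} B_qB_q^T$, and regrouping the double sum. Second, I would note that $\ker \CP = \{x \mid x^T\CP x = 0\}$ since $\CP \ge 0$, and combine this with the previous paragraph to conclude $\ker \CP = (\IM \mathcal{R}(\Sigma))^{\perp}$. Third, I would invoke Theorem~\ref{sect:real:lemma1}: $\Sigma$ is span-reachable iff $\Rank \mathcal{R}(\Sigma) = n$ iff $\IM \mathcal{R}(\Sigma) = \mathbb{R}^n$ iff $(\IM \mathcal{R}(\Sigma))^{\perp} = \{0\}$ iff $\ker \CP = \{0\}$ iff $\CP > 0$ (using $\CP \ge 0$). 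This proves the first claim. For the observability claim, I would apply the dual system $\Sigma^T = (n, Q, \{(A_q^T, C_q^T, B_q^T) \mid q \in Q\})$: by Remark~\ref{rem:dual}, $\Sigma$ is observable iff $\Sigma^T$ is span-reachable; moreover the nice observability grammian of $\Sigma$ is precisely the nice controllability grammian of $\Sigma^T$, since swapping $A_q \leftrightarrow A_q^T$ and $B_q \leftrightarrow C_q^T$ turns the defining equation $\Q = \sum_q A_q^T\Q A_q + \sum_q C_q^TC_q$ into the controllability equation for $\Sigma^T$. Hence the span-reachability case applied to $\Sigma^T$ yields the observability case for $\Sigma$.

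I do not expect any serious obstacle here; the statement is essentially a bookkeeping exercise linking the explicit series form of the nice grammian to the span-reachability/observability matrices. The one point requiring a little care is the interchange of the two summations (over $w \in Q^*$ and over $q \in Q$) in an infinite series — but since strong stability guarantees convergence (indeed absolute convergence, all terms being nonnegative) of $\sum_{w} A_w(\sum_q B_qB_q^T)A_w^T$, Tonelli's theorem (or just the fact that all summands are nonnegative) legitimizes the rearrangement and the term-by-term evaluation of $x^T\CP x$. A second minor point is making sure that $\ker \CP$ really equals the set where the quadratic form vanishes; this is standard for positive semi-definite matrices, via the Cauchy–Schwarz inequality $|x^T\CP y|^2 \le (x^T\CP x)(y^T\CP y)$, so a vanishing diagonal entry forces the whole row to vanish.
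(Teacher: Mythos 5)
Your proposal is correct and is essentially the paper's own argument in mirror image: the paper proves the observability half directly from the series formula of Lemma~\ref{nice_stab:proof} (writing $x^T\Q x=\sum_{v\in Q^*}\|\widetilde{C}A_vx\|_2^2$, then invoking Remark~\ref{lin:rem1.5} and Theorem~\ref{sect:problem_form:reachobs:prop1}) and gets span-reachability by duality, whereas you prove the span-reachability half directly and dualize the other way. One trivial correction: to obtain the closed form $\CP=\sum_{w\in Q^*}A_w\bigl(\sum_{q\in Q}B_qB_q^T\bigr)A_w^T$ you must apply Lemma~\ref{nice_stab:proof} with $F_q=A_q^T$ (so that the fixed-point equation reads $P=\sum_q A_qPA_q^T+\mathcal{G}$), not with $F_q=A_q$.
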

\begin{proof}[Proof of Lemma \ref{niceGramLemma1}]
We prove the statement about observability, its counterpart on
span-reachability follows by duality.  From Lemma \ref{nice_stab:proof}, it
follows that
\[ x^T\Q x=\sum_{v \in Q^{*}}
x^TA_{v}^T\widetilde{C}^T\widetilde{C}A_{v}x= \sum_{v \in Q^{*}}
||\widetilde{C}A_vx||_{2}^{2}
\]
where $\widetilde{C}=\begin{bmatrix} C_1^T, & \ldots, &
C_{\QNUM}^T \end{bmatrix}^T$. Then it follows that $x^T\Q x=0$ is
equivalent to $C_qA_vx=0$ for all $q \in Q$, $v \in Q^{*}$.  By Remark \ref{lin:rem1.5} the
latter is equivalent to $x \in \ker\mathcal{O}(\Sigma)$. From this the
statement of the lemma follows by using Theorem
\ref{sect:problem_form:reachobs:prop1} and Remark \ref{lin:rem1.5}.
\end{proof}
To sum up, if $\Sigma$ is a strongly stable system, then the
minimization procedure Procedure \ref{LSSmin} preserves strong
stability. Moreover, if $\Sigma$ and $\hat{\Sigma}$ are two isomorphic
systems related by an isomorphism $\MORPH$ from $\Sigma$ to
$\hat{\Sigma}$, and if $\CP$ and $\Q$ are the nice grammians of
$\Sigma$, then $\MORPH \CP \MORPH^T$ and $\MORPH^{-T}\Q\MORPH^{-1}$ are the
nice controllability and observability grammians of $\hat{\Sigma}$.
In other words, the singular values of $\CP\Q$ are independent of the
choice of the particular minimal realization, and existence of
strictly positive definite nice observability and controllability
grammians is guaranteed in minimal systems.

\subsection{Singular values}
\label{SS:SV}
We present the interpretation of the largest singular value
of a grammian pair $(\CP,\Q)$ in terms of the Hankel-norm of the
input-output map.
\begin{Definition}[Hankel-norm]
Let $\Sigma$ be a quadratically stable \BLSS and define the \emph{Hankel-norm}
$||Y^{\Sigma}||_{H}$ as follows. Denote by $\mathcal{U}^{0}$ the set of all
inputs $u \in \mathcal{U}$ such that there exists a time instant $t$
($t \in T$ in continuous-time, $t \in \mathbb{N}$ for discrete time), such that
$u(s)=0$ for all $s > t$. 
 Let $\mathbf{HG}(\Sigma)$ be the set
of all $\gamma > 0$ such that
\[ \forall u \in \mathcal{U}^0, q \in \mathcal{Q}: 
||Y^{\Sigma}(u,q)||_{2} \le \gamma ||u||_{2},
\]
Define the Hankel-norm $||Y^{\Sigma}||_{H}$ of $Y^{\Sigma}$ as 
$||Y^{\Sigma}||_{H}=\inf \mathbf{HG}(\Sigma)$.
\end{Definition}
\begin{Remark}
 If $\Sigma$ is quadratically stable, then by Lemma \ref{BalancClaim6},
 $Y^{\Sigma}$ has a finite $\mathcal{L}_2$ norm and the 
 $L_2$ (or in discrete-time case $l_2$) norm $||Y^{\Sigma}(u,q)||_{2}$ exists and
 it is finite.  From the definition of $||Y^{\Sigma}||_{\mathcal{L}_2}$ it follows
 that $||Y^{\Sigma}||_{\mathcal{L}_2} \in \mathbf{HG}(\Sigma)$ and 
 hence $||Y^{\Sigma}||_H \le ||Y^{\Sigma}||_{\mathcal{L}_2}$.
\end{Remark}

Intuitively, the Hankel-norm of $Y^{\Sigma}$ corresponds to the
maximum output energy of the system, if we first feed in a continuous
input $u$ with unit energy and from some time $t$ we stop feeding in
continuous input and we let the system to develop autonomously.
\begin{Theorem}
\label{BalancClaim7}
Consider an \BLSS $\Sigma$. Assume that $\CP > 0$ is a controllability
grammian and $\Q> 0$ is an observability grammian of $\Sigma$.  The
largest singular value $\sigma_{max}$ of $(\CP,\Q)$ satisfies
\[ ||Y^{\Sigma}||_{H} \le \sigma_{max}. \]
\end{Theorem}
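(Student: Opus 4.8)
The plan is to show that $\sigma_{max}\in\mathbf{HG}(\Sigma)$, i.e.\ that $\gamma=\sigma_{max}$ (hence every $\gamma\ge\sigma_{max}$) belongs to $\mathbf{HG}(\Sigma)$; the bound $||Y^{\Sigma}||_{H}=\inf\mathbf{HG}(\Sigma)\le\sigma_{max}$ then follows immediately. So fix an input $u\in\mathcal{U}^{0}$ together with a time instant $t$ such that $u(s)=0$ for $s>t$, and fix an arbitrary switching signal $q\in\mathcal{Q}$. Let $x=X^{\Sigma}_{0}(u,q)(t)$ be the state reached from the zero initial state under $(u,q)$ by the time the input has been switched off. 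The observation that drives the proof is that, since $u$ vanishes after $t$, the concatenation (semigroup) property of solutions gives $X^{\Sigma}_{0}(u,q)(t+s)=X^{\Sigma}_{x}(0,\tilde q)(s)$ for $s\ge0$, where $\tilde q$ is the switching signal $q$ shifted by $t$; consequently the part of $Y^{\Sigma}(u,q)$ produced from time $t$ on coincides, up to this shift, with the zero-input response $Y^{\Sigma}_{x}(0,\tilde q)$, so its energy equals $\int_{0}^{\infty}||Y^{\Sigma}_{x}(0,\tilde q)(s)||_{2}^{2}\,ds$ in continuous time, resp.\ the analogous infinite sum in discrete time.

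Next I would plug this into the two grammian estimates. Applying Lemma~\ref{BalancClaim1} to the state $x$ and the switching signal $\tilde q$, and letting the time horizon tend to infinity (the integrand/summand being nonnegative), yields $x^{T}\Q x\ge\int_{0}^{\infty}||Y^{\Sigma}_{x}(0,\tilde q)(s)||_{2}^{2}\,ds$, resp.\ the discrete sum; so the energy of the output generated after switch-off is at most $x^{T}\Q x$. On the input side, applying Lemma~\ref{BalancClaim3} to $(u,q)$ at time $t$ and using that $u$ is supported in $[0,t]$ gives $x^{T}\CP^{-1}x\le\int_{0}^{t}||u(s)||_{2}^{2}\,ds\le||u||_{2}^{2}$, resp.\ $x^{T}\CP^{-1}x\le\sum_{k=0}^{t}||u_{k}||_{2}^{2}\le||u||_{2}^{2}$.

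It remains to compare $x^{T}\Q x$ with $x^{T}\CP^{-1}x$. Since $\CP^{1/2}\Q\CP^{1/2}$ is similar to $\CP\Q$ via $\CP^{1/2}$, its largest eigenvalue equals $\lambda_{\max}(\CP\Q)=\sigma_{max}^{2}$; hence $\CP^{1/2}\Q\CP^{1/2}\le\sigma_{max}^{2}I$, which after conjugation by $\CP^{-1/2}$ reads $\Q\le\sigma_{max}^{2}\CP^{-1}$. Chaining the three inequalities, the energy of the output after the input is switched off is at most $x^{T}\Q x\le\sigma_{max}^{2}\,x^{T}\CP^{-1}x\le\sigma_{max}^{2}||u||_{2}^{2}$; that is, $||Y^{\Sigma}(u,q)||_{2}\le\sigma_{max}||u||_{2}$ for every $u\in\mathcal{U}^{0}$ and every $q\in\mathcal{Q}$, so $\sigma_{max}\in\mathbf{HG}(\Sigma)$ and the theorem follows.

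The eigenvalue comparison is routine and Lemmas~\ref{BalancClaim1}--\ref{BalancClaim3} are available, so the main obstacle is the bookkeeping in the first paragraph: formulating and justifying the concatenation identity cleanly in both the continuous- and discrete-time settings, being precise about which portion of $Y^{\Sigma}(u,q)$ actually enters the Hankel gain and which state plays the role of the ``initial condition'' for the autonomous phase (in discrete time the input $u_{t}$ still drives one extra step, so one should take $x=X^{\Sigma}_{0}(u,q)(t+1)$ and use $u_{t+1}=0$, then invoke Lemma~\ref{BalancClaim3} with horizon $t+1$), correctly shifting the switching signal, and making sure Lemma~\ref{BalancClaim1} is applied with the horizon sent to infinity so that the entire tail energy is captured.
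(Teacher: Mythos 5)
Your proposal is correct and follows essentially the same route as the paper's proof: evaluate Lemma \ref{BalancClaim3} and Lemma \ref{BalancClaim1} at the state reached when the input is switched off, and then compare $\Q$ with $\CP^{-1}$ through the largest eigenvalue of $\CP\Q$. The only differences are cosmetic — you express the comparison as the matrix inequality $\Q \le \sigma_{max}^{2}\CP^{-1}$ obtained from the similarity of $\CP^{1/2}\Q\CP^{1/2}$ and $\CP\Q$, whereas the paper computes $\sup_{x^T\CP^{-1}x\le 1} x^T\Q x=\lambda_{max}(\CP\Q)$ via a factorization $\CP^{-1}=S^TS$, and your identification of the post-switch-off output energy with $||Y^{\Sigma}(u,q)||_{2}$ is exactly the step the paper itself takes (you are in fact more explicit than the paper about the concatenation argument and the discrete-time indexing).
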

\begin{proof}[Proof of Theorem \ref{BalancClaim7}]
Pick a switching signal and input $(q,u) \in \mathcal{Q} \times \mathcal{U}$ 
such that for some time instance $t$
that $u(s)=0$ for all $s > t$ and $||u||_{2} \le 1$.  Denote by $x$
and $y$ the corresponding state and output trajectories.
Note that by Lemma \ref{BalancClaim6},
$y$ belongs to $L_2(T,Y)$ (cont. time) or $l_2(Y)$ (disc. time) and 
hence the norm $||y||_2$ exists and it is finite.
 By combining
Lemma \ref{BalancClaim1} and Lemma \ref{BalancClaim3}, we obtain that
$x(t)^T\CP^{-1}x(t) \le 1$
and
\[ x^T(t)\Q x(t) \ge ||y||_{2}^{2}
\]
Since $u$, $q$ and $t$ are arbitrary, we then obtain that
\[ \sup_{x^T\CP^{-1}x \le 1} x^T\Q x \ge ||Y^{\Sigma}||^2_{H} \] We
proceed to prove that
\[ \lambda_{max}(\CP\Q) = \sup_{x^T\CP^{-1}x \le 1}x^T\Q x \]
Let $\CP^{-1}=S^TS$, and define $\hat{\Q}=(S^{-1})^T\Q S^{-1}$.
It follows that
\[ \{Sx \mid x^T\CP^{-1}x \le 1\} = \{ v \mid v^Tv \le 1\}. \] Hence,
\[ \sup_{x^T\CP^{-1}x \le 1}x^T\Q x=\sup_{v^Tv \le 1} v^T\hat{\Q}v =
\lambda_{\max}(\hat{\Q}),
\]
where $\lambda_{max}(\hat{Q})$ is the maximal eigenvalue of $\hat{\Q}$. But
$\hat{\Q}=S\CP\Q S^{-1}$, hence the eigenvalues of $\hat{\Q}$ and
$\CP\Q$ coincide.
\end{proof}

For nice grammians $\CP$ and $\Q$, the sum of their singular values
can be represented as a Frobenius norm of the Hankel-matrix of the
system.  In order to present this result, we will first recall the
notion of a Hankel-matrix for linear switched systems. Although
Hankel-matrices can be defined both for continuous- and discrete-time,
we will below concentrate on the discrete-time case only.  Consider a
\LSS\ $\Sigma=\SwitchSysLin$ and for any $v \in Q^{*}$, define the
\emph{Markov-parameter} $M_{v}=\widetilde{C}A_{v}\widetilde{B}$, where
$\widetilde{C}=\begin{bmatrix} C_1^T, & \ldots, &
C_{\QNUM}^T \end{bmatrix}^T$ and $\widetilde{B}=\begin{bmatrix} B_1^T,
& \ldots, & B_{\QNUM} \end{bmatrix}$.  In \cite{MPLBJH:Real} it was
shown that the definition of the Markov-parameter depends only on the
input-output map $Y^{\Sigma}$ and not on the choice of $\Sigma$
itself. Define then the \emph{Hankel-matrix} of $Y^{\Sigma}$ as the
infinite block matrix
\[ H=(H_{s,v})_{s,v \in Q^{*}}, \] rows and columns of which are
indexed by sequences from $Q^{*}$ such that \( H_{s,v}=M_{vs} \).
Using the notation above, we can relate $\mathrm{tr}(\CP\Q)$ and the
Frobenius-norm of $H$ as follows.
\begin{Lemma}\label{lemmaproof}
If $\CP$ and $\Q$ are the nice grammians of $\Sigma$, then
\[ \mathrm{tr}(\CP\Q)=\sum_{v,s \in Q^{*}} ||H_{s,v}||^{2}_{F}, \] where
$||.||_F$ denotes the Frobenius norm of a matrix.
\end{Lemma}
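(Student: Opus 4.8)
The plan is to write both nice grammians as explicit convergent series indexed by $Q^{*}$, substitute them into $\mathrm{tr}(\CP\Q)$, and recognise the resulting terms as squared Frobenius norms of Markov parameters.

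First I would record the series form of the grammians. Since nice grammians are only defined when $\Sigma$ is strongly stable, Lemma~\ref{nice_stab:proof} applies: exactly as in the proof of Lemma~\ref{niceGramLemma1} (directly for $\Q$, and by the duality argument used there for $\CP$), one obtains
\[
\CP=\sum_{w\in Q^{*}} A_w\widetilde{B}\widetilde{B}^{T}A_w^{T},
\qquad
\Q=\sum_{v\in Q^{*}} A_v^{T}\widetilde{C}^{T}\widetilde{C}A_v,
\]
where $\widetilde{B}=[B_1,\ldots,B_{\QNUM}]$ and $\widetilde{C}=[C_1^{T},\ldots,C_{\QNUM}^{T}]^{T}$, so that $\widetilde{B}\widetilde{B}^{T}=\sum_{q\in Q}B_qB_q^{T}$ and $\widetilde{C}^{T}\widetilde{C}=\sum_{q\in Q}C_q^{T}C_q$. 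Both series converge in norm because strong stability makes $\sum_{q\in Q}A_q^{T}\otimes A_q^{T}$ a stable matrix, and all summands are positive semi-definite, which legitimises every rearrangement below.

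Next I would expand, using bilinearity, the cyclic invariance of the trace, and $\mathrm{tr}(MM^{T})=\|M\|_F^{2}$:
\[
\mathrm{tr}(\CP\Q)
=\sum_{w,v\in Q^{*}}\mathrm{tr}\!\left(A_w\widetilde{B}\widetilde{B}^{T}A_w^{T}A_v^{T}\widetilde{C}^{T}\widetilde{C}A_v\right)
=\sum_{w,v\in Q^{*}}\left\|\widetilde{C}A_vA_w\widetilde{B}\right\|_F^{2}.
\]
The crucial algebraic point is the concatenation rule for the products $A_{(\cdot)}$: with the paper's (order-reversing) convention $A_{q_1\cdots q_k}=A_{q_k}\cdots A_{q_1}$, one checks that $A_vA_w=A_{wv}$ for all $v,w\in Q^{*}$, whence $\widetilde{C}A_vA_w\widetilde{B}=\widetilde{C}A_{wv}\widetilde{B}=M_{wv}$, the Markov parameter of $Y^{\Sigma}$.

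Finally, since $H_{s,v}=M_{vs}$ by definition of the Hankel-matrix, relabelling the dummy summation indices gives
\[
\mathrm{tr}(\CP\Q)=\sum_{w,v\in Q^{*}}\|M_{wv}\|_F^{2}=\sum_{v,s\in Q^{*}}\|M_{vs}\|_F^{2}=\sum_{v,s\in Q^{*}}\|H_{s,v}\|_F^{2},
\]
which is the asserted identity. I expect the only delicate steps to be bookkeeping: fixing the word-order in $A_vA_w=A_{wv}$ under the order-reversing convention (the step most prone to an off-by-a-reversal slip, although the symmetry of the double sum over all pairs of words makes the final answer robust to it), and spelling out that the interchange of the double sum of nonnegative matrices with the trace is legitimate. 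Neither is deep.
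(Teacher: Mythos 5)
Your proposal is correct and follows essentially the same route as the paper's own proof: expand both nice grammians as series over $Q^{*}$ via Lemma~\ref{nice_stab:proof}, substitute into $\mathrm{tr}(\CP\Q)$, use cyclicity of the trace, and identify $\widetilde{C}A_{v}A_{w}\widetilde{B}$ with a Markov parameter, hence a Hankel block. Your bookkeeping of the order-reversing convention ($A_vA_w=A_{wv}$, so the term is $M_{wv}=H_{v,w}$, harmless after relabelling the symmetric double sum) matches the paper's computation.
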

\begin{proof}[Proof of Lemma~\ref{lemmaproof}]
Notice that $$\Q=\sum_{q \in Q} A^T_{q}\Q
A_{q}+\widetilde{C}^T\widetilde{C} \hbox{ and } \CP=\sum_{q \in Q} A_{q}\CP
A_q^T+\widetilde{B}\widetilde{B}^T.$$  From Lemma \ref{nice_stab:proof}
it follows that $$\Q=\sum_{w \in Q^{*}}
A^T_{w}\widetilde{C}^T\widetilde{C}A_{w}~\text{and}~\CP=\sum_{v \in Q^{*}}
A_{v}\widetilde{B}\widetilde{B}^TA_{v}^T$$ and hence,
\[ \CP \Q = \sum_{w \in Q^{*}}\sum_{v \in Q^{*}}
A_{v}\widetilde{B}\widetilde{B}^TA_{v}^TA^T_{w}\widetilde{C}^T\widetilde{C}A_{w}, \]
and thus
\[ \mathrm{tr}(\CP\Q)=\sum_{w \in Q^{*}}\sum_{v \in Q^{*}}
\mathrm{tr}(A_{v}\widetilde{B}\widetilde{B}^TA_{v}^TA^T_{w}\widetilde{C}^T\widetilde{C}A_{w}).
\]
Notice now that
\[
\begin{split}
\mathrm{tr}((A_{v}\widetilde{B})(\widetilde{B}^TA_{v}^T & A^T_{w}\widetilde{C}^T)(\widetilde{C}A_{w}))\\
& =\mathrm{tr}(\widetilde{B}^TA_{v}^TA^T_{w}\widetilde{C}^T)(\widetilde{C}A_{w}A_{v}\widetilde{B})\\
& = \mathrm{tr}(H_{w,v}^TH_{w,v})\\
&=||H_{w,v}||^{2}_{F}
\end{split}
\]
\end{proof}

If $\CP=\Q=diag(\sigma_1,\ldots,\sigma_n)$, then
$\mathrm{tr}(\CP\Q)=\sum_{i=1}^{n} \sigma_i^2$ and hence the lemma above
implies that this quantity does not depend on the state-space
representation.

\section{Model reduction for linear switched systems}
\label{sect:model_red}
In this section, we state the procedure for model reduction by
balanced truncation, and we prove a bound of the approximation error.
\begin{Procedure}{\textbf{Balanced truncation}}
\label{BalancedTruncate}
Consider a \BLSS
$\Sigma=\SwitchSysLin$.
\begin{enumerate}
\item Find a positive definite solution $\Q > 0$ to
\eqref{ContObsGram}.
\item Find a positive definite solution $\CP > 0$ to
\eqref{ContContrGram}.
\item Find $U$ such that $\CP=UU^T$ and find an orthogonal $K$ such that $U^{T}\Q
U=K\Lambda^2 K^{T}$, where $\Lambda$ is diagonal with the diagonal
elements taken in decreasing order.  Define the transformation
\[ \MORPH =\Lambda^{1/2}K^TU^{-1} \]
\item Replace $\Sigma$ with $$\Sigma_{\mathrm{bal}}=(n, Q,
(\bar{A}_q=\MORPH A_q\MORPH^{-1},\bar{B}_q=\MORPH
B_q,\bar{C}_q=C_q\MORPH^{-1})_{q \in Q}).$$
\item

The transformed system $\Sigma_{\mathrm{bal}}$ is balanced, i.e.,
$\forall q \in Q: \mathbf{O}(q,\Sigma_{\mathrm{bal}},\Lambda) < 0$
$\forall q \in Q: \mathbf{C}(q,\Sigma_{\mathrm{bal}},\Lambda) < 0$.
Indeed, it is enough to notice that
$\Lambda=(\MORPH^{-1})^T\Q\MORPH^{-1}=\MORPH \CP \MORPH^{T}$ and use these
expressions to derive 
$\mathbf{O}(q,\Sigma_{\mathrm{bal}},\Lambda) < 0$ and
$\mathbf{C}(q,\Sigma_{\mathrm{bal}},\Lambda) < 0$ from
$\mathbf{O}(q,\Sigma,\Q) < 0$ and
$\mathbf{C}(q,\Sigma,\CP) < 0$ for all $q \in Q$.

\item Assume that $\Lambda=\diag(\sigma_1,\ldots,\sigma_n)$, $\sigma_1
\ge \sigma_2 \ge \cdots \ge \sigma_n$.  Choose $r < n$ and let
$\Lambda_1=\diag(\sigma_1,\ldots,\sigma_r)$. Choose $\hat{A}_q \in
\mathbb{R}^{r \times r}$, $\hat{B}_q \in \mathbb{R}^{r \times m}$ and
$\hat{C}_q \in \mathbb{R}^{p \times r}$ so that
\begin{equation}
\label{BalancedTruncate:eq1}
\bar{A}_q=\begin{bmatrix}
\hat{A}_q & A_{q,12} \\
A_{q,21} & A_{q,22}
\end{bmatrix} \mbox{, \ \ } \bar{B}_q = \begin{bmatrix} \hat{B}_q \\
B_{q,2}
\end{bmatrix} \mbox{,\ \ \ } \bar{C}_q^T = \begin{bmatrix} \hat{C}_q^T
\\ C_{q,2}^T \end{bmatrix}
\end{equation}
Return as a reduced order model $\hat{\Sigma}=(r, Q, \{
(\hat{A}_q,\hat{B}_q,\hat{C}_q) \mid q \in Q\})$.
\end{enumerate}
\end{Procedure}

In the following, we will state an error bound for the difference
between the input-output maps of $\Sigma$ and $\hat{\Sigma}$. To this
end, we will use the following simple fact, due to \cite{Shaker1} in
the continuous case.
\begin{Lemma}
\label{BalancedTruncateLemma1}
The \BLSS $\hat{\Sigma}$ returned by Procedure \ref{BalancedTruncate}
is balanced.
In addition, if either $\forall q \in Q: \mathbf{O}(q,\Sigma,\Q) < 0$ or
$\forall q \in Q: \mathbf{C}(q,\Sigma,\CP) < 0$, then
$\hat{\Sigma}$ is also quadratically stable.
\end{Lemma}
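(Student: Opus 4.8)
The plan is to verify the two claimed properties of $\hat\Sigma$ separately: that it is balanced, and that it inherits quadratic stability. The balancedness is essentially bookkeeping. By Procedure~\ref{BalancedTruncate}, the full transformed system $\Sigma_{\mathrm{bal}}$ satisfies $\MORPH\CP\MORPH^T = (\MORPH^{-1})^T\Q\MORPH^{-1} = \Lambda = \diag(\sigma_1,\ldots,\sigma_n)$, so both the controllability and observability grammians of $\Sigma_{\mathrm{bal}}$ equal $\Lambda$. First I would observe that the partition in \eqref{BalancedTruncate:eq1} is precisely the partition that cuts off the last $n-r$ coordinates, and that under this partition the upper-left $r\times r$ block of $\Lambda$ is $\Lambda_1 = \diag(\sigma_1,\ldots,\sigma_r)$. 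Then I would apply Lemma~\ref{inv:theo:lemma2} (or rather the direct computation behind it): restricting the Lyapunov/Stein-type inequalities $\mathbf{O}(q,\Sigma_{\mathrm{bal}},\Lambda)<0$ and $\mathbf{C}(q,\Sigma_{\mathrm{bal}},\Lambda)<0$ to the leading $r\times r$ block yields $\mathbf{O}(q,\hat\Sigma,\Lambda_1)\le 0$ and $\mathbf{C}(q,\hat\Sigma,\Lambda_1)\le 0$ for all $q\in Q$. Hence $\hat\Sigma$ has $\Lambda_1$ as a common (diagonal) controllability and observability grammian, i.e.\ $\hat\Sigma$ is balanced.

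For the quadratic-stability part, suppose without loss of generality $\forall q\in Q:\mathbf{O}(q,\Sigma,\Q)<0$ (the controllability case is entirely dual, via the dual system of Remark~\ref{rem:dual} and Lemma~\ref{inv:theo:lemma3}, so I would handle it in one line). Strict inequality is preserved by the isomorphism $\MORPH$, so $\mathbf{O}(q,\Sigma_{\mathrm{bal}},\Lambda)<0$ strictly for all $q$. Now I take the leading $r\times r$ principal block of this strict inequality. In continuous time, the $(1,1)$-block of $A_q^T\Lambda+\Lambda A_q+C_q^TC_q<0$ is $\hat A_q^T\Lambda_1+\Lambda_1\hat A_q+\hat C_q^T\hat C_q<0$ (dropping the $C_q^TC_q$ term only makes it more negative), which gives $\hat A_q^T\Lambda_1+\Lambda_1\hat A_q<0$; since $\Lambda_1>0$ this is a common quadratic Lyapunov inequality, so $\hat\Sigma$ is quadratically stable. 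In discrete time the argument is the same in spirit but needs slightly more care: the $(1,1)$-block of $A_q^T\Lambda A_q + C_q^TC_q - \Lambda<0$ is $\hat A_q^T\Lambda_1\hat A_q + A_{q,21}^T\Lambda_2 A_{q,21} + \hat C_q^T\hat C_q - \Lambda_1<0$ where $\Lambda_2=\diag(\sigma_{r+1},\ldots,\sigma_n)$; since $A_{q,21}^T\Lambda_2 A_{q,21}\ge 0$ and $\hat C_q^T\hat C_q\ge 0$ this yields $\hat A_q^T\Lambda_1\hat A_q - \Lambda_1<0$, i.e.\ $\mathbf{S}(q,\hat\Sigma,\Lambda_1)<0$, so $\hat\Sigma$ is quadratically stable.

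The one genuinely delicate point — and the step I expect to be the main obstacle — is the extraction of the leading principal block from a strict negative-definite matrix inequality, together with the sign of the ``extra'' terms that appear in the discrete-time case. The underlying fact is elementary (a principal submatrix of a negative definite matrix is negative definite, and one is then free to \emph{subtract} further positive semidefinite terms while preserving strictness), but it must be stated cleanly for both the Lyapunov and Stein forms, and one must check that the block decomposition of $\bar A_q, \bar B_q, \bar C_q$ in \eqref{BalancedTruncate:eq1} lines up exactly with the block decomposition of $\Lambda$ into $\Lambda_1$ and $\Lambda_2$. I would therefore organize the proof around a short auxiliary observation: if $M = \begin{bmatrix} M_{11} & M_{12}\\ M_{21} & M_{22}\end{bmatrix}<0$ then $M_{11}<0$, and for the Stein case additionally note $\bar A_q^T\Lambda\bar A_q$ has $(1,1)$-block $\hat A_q^T\Lambda_1\hat A_q + A_{q,21}^T\Lambda_2 A_{q,21}$. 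The rest is routine substitution, and the two halves (balanced / quadratically stable) combine to give exactly the statement.
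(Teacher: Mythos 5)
Your proposal is correct and follows essentially the same route as the paper's proof: extract the leading $r\times r$ block of the grammian inequalities for the balanced system (noting that the extra discrete-time terms $A_{q,12}\Lambda_2A_{q,12}^T$ and $A_{q,21}^T\Lambda_2A_{q,21}$ are positive semi-definite), obtaining $\mathbf{O}(q,\hat{\Sigma},\Lambda_1)\le 0$ and $\mathbf{C}(q,\hat{\Sigma},\Lambda_1)\le 0$, and then observe that strictness of one of the original inequalities carries over to the truncated block. The only cosmetic difference is that where the paper concludes quadratic stability by citing Lemma~\ref{BalancClaim9}, you inline its easy direction by dropping the $\hat{C}_q^T\hat{C}_q$ (resp.\ $\hat{B}_q\hat{B}_q^T$) term to get the Lyapunov/Stein inequality directly, which is the same argument in substance.
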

\begin{proof}[Proof of Lemma \ref{BalancedTruncateLemma1}]
 The first statement of the proof follows by
 showing that 
$\forall q \in Q: \mathbf{O}(q,\hat{\Sigma},\Lambda_1) \le 0$ and
$\forall q \in Q: \mathbf{C}(q,\hat{\Sigma},\Lambda_1) \le 0$.
For the continous-time case the proof of this claim is straightforward:
\[
  \begin{split}
   & \bar{A}_q^T\Lambda = \begin{bmatrix} \hat{A}_q\Lambda_1 & \star \\
                                        \star & \star
                       \end{bmatrix} \\
   & \bar{A}_q\Lambda = \begin{bmatrix} \hat{A}_q\Lambda_1 & \star \\
                                        \star & \star
                       \end{bmatrix} \\
 \end{split}
\]
and hence for $\mathbf{K} \in \{\mathbf{C},\mathbf{O}\}$,
\[
  \mathbf{K}(q,\bar{\Sigma},\Lambda)=
  \begin{bmatrix} \mathbf{K}(q,\hat{\Sigma},\Lambda_1) & \star \\
                   \star & \star \end{bmatrix}.
\]
Hence, if $\mathbf{K}(q,\bar{\Sigma},\Lambda) \le 0$, then
$\mathbf{K}(q,\hat{\Sigma},\Lambda_1) \le 0$.

For the discrete-time case, 
using the notation of Procedure \ref{BalancedTruncate}, it is easy to
see that if $\Lambda=\begin{bmatrix} \Lambda_1 & 0 \\ 0 &
\Lambda_2 \end{bmatrix}$ where $\Lambda_1 \in \mathbb{R}^{r \times
  r}$, then
\[
\begin{split}
& \bar{A}_{q}\Lambda \bar{A}_q^T =
\begin{bmatrix}
\hat{A}_q\Lambda_1\hat{A}_q + A_{q,12}\Lambda_2A_{q,12}^T, &
\star \\
\star, & \star
\end{bmatrix}  \\
& \bar{A}^T_{q}\Lambda \bar{A}_q =
\begin{bmatrix}
\hat{A}^T_q\Lambda_1\hat{A}_q + A_{q,21}^T\Lambda_2A_{q,21}, &
\star \\
\star, & \star
\end{bmatrix}  \\
\end{split}
\]
Hence, the corresponding inequalities for the balanced system
$\bar{\Sigma}$ can be written as
\[
\begin{split}
0 &\ge \bar{A}_q\Lambda\bar{A}_q^T+\bar{B}_q\bar{B}^T_q - \Lambda\\
& = \begin{bmatrix}
\hat{A}_{q}\Lambda_1\hat{A}_q^T+ A_{q,12}\Lambda_2A_{q,12}^T  +\hat{B}_q\hat{B}_q^T-\Lambda_1, & \star \\
\star & \star \end{bmatrix} \\
0 &\ge \bar{A}_q^T\Lambda\bar{A}_q+\bar{C}_q\bar{C}^T_q - \Lambda\\
& = \begin{bmatrix}
\hat{A}^T_{q}\Lambda_1\hat{A}_q+ A_{q,21}^T\Lambda_2A_{q,21}  +\hat{C}^T_q\hat{C}_q^T-\Lambda_1, & \star \\
\star & \star \end{bmatrix} \\
\end{split}
\]
Thus, we get that
\[
\begin{split}
\hat{A}_{q}\Lambda_1\hat{A}_q^T&+\hat{B}_q\hat{B}_q^T-\Lambda_1   \\
& \le \hat{A}_{q}\Lambda_1\hat{A}_q^T+ A_{q,12}\Lambda_2A_{q,12}^T  +\hat{B}_q\hat{B}_q^T-\Lambda_1 \le 0 \\
\hat{A}_{q}^T\Lambda_1\hat{A}_q&+\hat{C}^T_q\hat{C}_q-\Lambda_1  \\
& \le \hat{A}_{q}^T\Lambda_1\hat{A}_q+ A_{q,21}^T\Lambda_2A_{q,21}
+\hat{C}^T_q\hat{C}_q-\Lambda_1 \le 0
\end{split}
\]
from which it follows that $\Lambda_1$ is both a controllability and
observability grammian.

The proof above also yields that if
  $\forall q \in Q: \mathbf{O}(q,\Sigma,\Q) < 0$ or $\forall q \in Q: \mathbf{C}(q,\Sigma,\CP) < 0$, then $\forall q \in Q: \mathbf{O}(q,\hat{\Sigma},\Lambda_1) < 0$ or respectively $\forall q \in Q: \mathbf{C}(q,\hat{\Sigma},\Lambda_1) < 0$.  By Lemma \ref{BalancClaim9} the latter implies that $\hat{\Sigma}$ is quadratically stable.
\end{proof}
\begin{Remark}[Nice observability/controllability grammian]
In the discrete-time case, one could take the nice observability and
controllability grammians from Definition \ref{DTniceGram} as inputs
for Procedure \ref{BalancedTruncate}. It is clear that the balancing
step then leads to a nice controllability and observability grammians
which are diagonal and equal to each other. However, from the proof of
Lemma \ref{BalancedTruncateLemma1}, it is clear that the resulting
reduced order system might be balanced, but the grammian $\Lambda_1$
of the reduced system is not necessarily the nice grammian. However,
from the proof of Lemma \ref{BalancedTruncateLemma1}, it follows that
$\Lambda_1$ is the nice observability and controllability grammian of
the system $\hat{\Sigma}=(r, Q, \{ (\hat{A}_q,\begin{bmatrix}
\hat{B}_q, & A_{q,12} \end{bmatrix}, \begin{bmatrix} \hat{C}_q \\
A_{q,21} \end{bmatrix} ) \mid q \in Q\})$.
It remains a topic of future research to find out if the balanced truncation
procedure can be adapted in such a way that $\Lambda_1$ remains a nice
grammian.
\end{Remark}

One may wonder if the system $\hat{\Sigma}$ returned by Procedure
\ref{BalancedTruncate} is minimal, at least when $\Sigma$ was
minimal. The answer is negative, as demonstrated by Example
\ref{example1}.
The fact that the reduced system need not even be minimal already
indicates that Procedure \ref{BalancedTruncate} might be too
conservative.

\begin{Example}
\label{example1}
Assume $Q=\{1\}$ consists of one element, $A=\begin{bmatrix} -2 & 0 &
0 \\ 0 & -1 & 1 \\ 0 & 0 & -3 \end{bmatrix}$, $B=\begin{bmatrix} 1 \\
0 \\ 1 \end{bmatrix}$, $C=\begin{bmatrix} 1 & 1 & 0 \end{bmatrix}$.
Then $(A,B,C)$ is balanced according to our definition with
$\Lambda=\diag(2,1,0.5)$.  However, $\hat{A}=\begin{bmatrix} -2 & 0 \\
0 &-1 \end{bmatrix}$, $\hat{B}=\begin{bmatrix} 1 \\ 0 \end{bmatrix}$
and $\hat{C}=\begin{bmatrix} 1 & 1 \end{bmatrix}$, which is clearly
not minimal.
\end{Example}

\begin{Theorem}[Error bound]
\label{DBalanceClaim8}
For the system $\hat{\Sigma}$ returned by Procedure
\ref{BalancedTruncate},
\begin{equation}
\label{DBalanceClaim8:eq1}
||Y^{\Sigma}-Y^{\hat{\Sigma}}||_{\mathcal{L}_2} \le 2 \sum_{k=r+1}^{n} \sigma_k.
\end{equation}
\end{Theorem}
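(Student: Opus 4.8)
The plan is to derive \eqref{DBalanceClaim8:eq1} by removing the balanced coordinates one at a time and summing the resulting errors with the triangle inequality. For $r\le k\le n$ let $\Sigma^{(k)}$ denote the \BLSS obtained from the balanced system $\Sigma_{\mathrm{bal}}$ of Procedure~\ref{BalancedTruncate} by discarding its last $n-k$ coordinates, so $\Sigma^{(n)}=\Sigma_{\mathrm{bal}}$ and $\Sigma^{(r)}=\hat{\Sigma}$. By Lemma~\ref{BalancedTruncateLemma1} applied repeatedly — together with step~5 of Procedure~\ref{BalancedTruncate}, which guarantees that $\Sigma_{\mathrm{bal}}$ satisfies the \emph{strict} grammian inequalities with $\Lambda$, and the fact (read off from the proof of Lemma~\ref{BalancedTruncateLemma1}) that this strictness is inherited by each truncation — every $\Sigma^{(k)}$ is again balanced, with the diagonal grammian $\diag(\sigma_1,\ldots,\sigma_k)$, and quadratically stable; by Lemma~\ref{BalancClaim6} each $Y^{\Sigma^{(k)}}$, and hence each difference $Y^{\Sigma^{(k)}}-Y^{\Sigma^{(k-1)}}$, has finite $\mathcal{L}_2$ norm. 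Because for each fixed $q$ every map involved is linear in $u$, the quantity $\|\cdot\|_{\mathcal{L}_2}$ restricted to input-output maps is a supremum of operator norms and therefore subadditive; telescoping (and using $Y^{\Sigma_{\mathrm{bal}}}=Y^{\Sigma}$, since $\MORPH$ is an isomorphism) gives
\[
\|Y^{\Sigma}-Y^{\hat{\Sigma}}\|_{\mathcal{L}_2}\;\le\;\sum_{k=r+1}^{n}\|Y^{\Sigma^{(k)}}-Y^{\Sigma^{(k-1)}}\|_{\mathcal{L}_2}.
\]
It therefore suffices to prove the one-step bound $\|Y^{\Sigma^{(k)}}-Y^{\Sigma^{(k-1)}}\|_{\mathcal{L}_2}\le 2\sigma_k$ for $r<k\le n$.

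For the one-step bound I would fix a balanced, quadratically stable \BLSS $\Gamma$ whose diagonal grammian has the form $\diag(\Lambda_1,\sigma)$ with $\Lambda_1$ diagonal and $\sigma>0$ a scalar, and let $\hat{\Gamma}$ be the truncation obtained by deleting the last coordinate. Form the \emph{error} \BLSS $\Gamma_e$ with state $(x,\hat x)$, system matrices $A_q^{e}=\diag(A_q,\hat A_q)$, $B_q^{e}$ obtained by stacking $B_q$ over $\hat B_q$, and $C_q^{e}=\begin{bmatrix} C_q & -\hat C_q\end{bmatrix}$; since both subsystems start from the zero state and see the same $(u,q)$, linearity gives $Y^{\Gamma_e}=Y^{\Gamma}-Y^{\hat{\Gamma}}$. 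The crux is to produce a positive definite $P_e$ with $\mathbf{G}_{2\sigma}(q,\Gamma_e,P_e)\le 0$ for all $q\in Q$, $P_e$ being the classical balanced-truncation certificate of the linear (and time-varying) theory: it is assembled from $\Lambda_1$ and the scalar $\sigma$, coupling the retained coordinates of $x$ with $\hat x$ through $\Lambda_1$ and assigning weight $\sigma$ to the deleted coordinate. Expanding $\mathbf{G}_{2\sigma}(q,\Gamma_e,P_e)$ block by block and substituting the two grammian inequalities $\mathbf{O}(q,\Gamma,\diag(\Lambda_1,\sigma))\le 0$ and $\mathbf{C}(q,\Gamma,\diag(\Lambda_1,\sigma))\le 0$, each block collapses to a manifestly negative semidefinite expression; the decisive point is that the deleted grammian block is a \emph{scalar}, which is exactly what makes the relevant cross terms cancel. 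This computation only ever involves the matrices of a single mode $q$, so it transfers verbatim from the linear to the switched case — in effect balanced truncation localizes to the individual modes.

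Granting such a $P_e$, the bound $\|Y^{\Gamma_e}\|_{\mathcal{L}_2}\le 2\sigma$ follows from Lemma~\ref{BalancClaim6}: either by a routine limiting argument (perturb $P_e$ using a block-diagonal Lyapunov function for the quadratically stable $\Gamma_e$ and relax $2\sigma$ to $2\sigma+\varepsilon$, obtaining a strict solution of \eqref{BalancClaim6:eq1} for every $\varepsilon>0$), or directly from the dissipativity argument underlying Lemma~\ref{BalancClaim6}, in which the non-strict inequality already suffices once the boundary term is controlled by quadratic stability. Applying this with $\Gamma=\Sigma^{(k)}$, $\hat{\Gamma}=\Sigma^{(k-1)}$, $\sigma=\sigma_k$ gives the one-step bound, and substituting it into the telescoped inequality yields $\|Y^{\Sigma}-Y^{\hat{\Sigma}}\|_{\mathcal{L}_2}\le 2\sum_{k=r+1}^{n}\sigma_k$. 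The continuous- and discrete-time proofs run in complete parallel; only the explicit form of $P_e$ and of $\mathbf{G}_{2\sigma}(q,\Gamma_e,\cdot)$ differs (Lyapunov versus Stein blocks). I expect the genuine obstacle to be the construction and block-by-block verification of the certificate $P_e$: this is where all the real computation lives and where the scalar (equal-singular-value) structure of the deleted block is indispensable. A smaller but necessary ingredient is to check that every intermediate truncation $\Sigma^{(k)}$ is itself balanced and quadratically stable so that the telescoping closes — which is what Lemma~\ref{BalancedTruncateLemma1} delivers.
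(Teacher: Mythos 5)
Your outer argument coincides with the paper's: truncate one balanced coordinate at a time, check via Lemma \ref{BalancedTruncateLemma1} that each intermediate system is again balanced with grammian $\diag(\sigma_1,\ldots,\sigma_k)$, and telescope with subadditivity of $\|\cdot\|_{\mathcal{L}_2}$, using $Y^{\Sigma_{\mathrm{bal}}}=Y^{\Sigma}$. That reduction is exactly how the paper proceeds, and it is sound. Where you diverge is the one-step bound: the paper proves it (Lemma \ref{BalanceClaim8.1}, Appendix \ref{appsandberg}) by a direct dissipation argument on the trajectories, introducing $X_c(t)=(x_1(t)+\hat{x}(t),x_2(t))$ and $X_o(t)=(x_1(t)-\hat{x}(t),x_2(t))$ and combining an observability-type inequality weighted by $\Lambda$ with a controllability-type inequality weighted by $\Lambda^{-1}$, whereas you propose to exhibit an LMI certificate $P_e$ for the error system $\Gamma_e$ and invoke Lemma \ref{BalancClaim6}. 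That route is legitimate in principle (it is essentially the Wood--Goddard--Glover style of argument, and it is morally the same computation as the paper's, repackaged as a storage-function matrix for $\Gamma_e$).

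The genuine gap is that you never carry out this step. You describe $P_e$ only as ``the classical balanced-truncation certificate \ldots assembled from $\Lambda_1$ and the scalar $\sigma$,'' assert that the blocks of $\mathbf{G}_{2\sigma}(q,\Gamma_e,P_e)$ ``collapse to a manifestly negative semidefinite expression,'' and yourself identify this as ``where all the real computation lives.'' But this one-step bound \emph{is} the theorem: everything else is bookkeeping, and the paper devotes its entire Appendix \ref{appsandberg} (Lemmas \ref{l1} and \ref{l2}, with separate and nontrivial continuous- and discrete-time computations) to precisely this verification. In particular, in discrete time the cancellation is delicate: the error dynamics are not handled by a naive block-diagonal guess, and the paper needs the identity \eqref{ClaimBalance8.1:42DD} and careful bookkeeping of the cross terms in $z(t)$ before the scalar structure of the deleted block can be exploited. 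Without an explicit $P_e$ and the block-by-block check (or an equivalent dissipation computation), the claim $\|Y^{\Gamma_e}\|_{\mathcal{L}_2}\le 2\sigma$ is asserted rather than proved. A secondary, fixable issue: Lemma \ref{BalancClaim6} as stated requires a \emph{strict} solution of \eqref{BalancClaim6:eq1}, while any certificate built from the non-strict grammian inequalities will only satisfy the non-strict version; your suggested perturbation argument can close this, but the paper sidesteps it by deriving the error bound directly from the non-strict inequalities along trajectories, which is cleaner and is part of what the deferred computation has to deliver.
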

\begin{proof}[Proof of Theorem \ref{DBalanceClaim8}]
The proof of Theorem \ref{DBalanceClaim8} is based in the following
lemma whose proof is in \ref{appsandberg}.
%
%
\begin{Lemma}
\label{BalanceClaim8.1}
For $r=n-1$, \eqref{DBalanceClaim8:eq1} is true.
\end{Lemma}

Suppose that $\hat{\Sigma}_1$ is the reduced system obtained by
removing the singular value $\sigma_n$.  It is easy to see that
$\hat{\Sigma}_1$ is again balanced with grammian $\Lambda_1$. We can
again apply the model reduction procedure to $\hat{\Sigma}_1$, remove
its smallest singular value $\sigma_{n-1}$ and obtain
$\hat{\Sigma}_2$. Suppose that the balanced system $\hat{\Sigma}_i$
with grammian $\Lambda_i=\diag(\sigma_1,\ldots,\sigma_{n-i})$ is
given. Define $\hat{\Sigma}_{i+1}$ as the system which is obtained
from $\hat{\Sigma}_i$ by applying the balanced truncation to the last
state, i.e., to the state which corresponds to $\sigma_{n-i}$.  In
this way, we obtain systems $\hat{\Sigma}_1,\ldots,
\hat{\Sigma}_{n-r}$ such that $\dim \hat{\Sigma}_i=n-i$ and
$||Y^{\hat{\Sigma}}_{i-1}-Y^{\hat{\Sigma}}_i||_{\mathcal{L}_2} \le 2
\sigma_{n-i+1}$, where $\hat{\Sigma}_0 = \Sigma$. Notice that
$\hat{\Sigma}_{n-r}=\hat{\Sigma}$
\[ ||\Sigma - \hat{\Sigma}||_{\mathcal{L}_2} \le \sum_{i=1}^{n-r}
||\hat{\Sigma}_{i-1} - \hat{\Sigma}_i||_{\mathcal{L}_2} \le
2\sum_{k=r+1}^{n} \sigma_{k},
\]
i.e., the error bound holds.
\end{proof}
\begin{Remark}
\label{min:suff}
Theorem \ref{DBalanceClaim8} and the second statement of
Theorem \ref{inv:theo} imply that it is enough to
apply balanced truncation to minimal \SLSS.
Indeed, let us apply balanced truncation to an \LSS\ $\Sigma$ with
controllability and observability grammians $(\CP,\Q)$. Assume that
$\sigma_1 \ge \ldots \ge \sigma_n$ are the singular values of $(\CP,\Q)$, $n=\dim \Sigma$.
Then the approximation error will be  bounded
by $2\sum_{i=r+1}^{n} \sigma_i$.
If we replace $(\CP,\Q)$ by the correspondig
grammians $(\CP_m,\Q_m)$ of a minimal \LSS\ $\Sigma_m$,
as described in Theorem \ref{inv:theo},
and we perform a balanced truncation by keeping the first $r$ singular values,
then the error bound becomes
$2\sum_{i=r+1}^{k} \lambda_i \le 2\sum_{i=r+1}^{n} \sigma_i$, i.e.
the error bound obtained by using the minimal \LSS\ does not exceed that of for the
original system.
\end{Remark}

\section{Relationship with Other Work}\label{RelationSection}
Results similar to those presented in this paper have already been
obtained by \cite{WoodGoddardGlover96,BeckThesis,Kotsalis}. More
precisely, \cite{WoodGoddardGlover96} studies the model reduction of a
linear parameter varying system,
\begin{eqnarray} \label{LPVsystem}
LPV:
\begin{cases}
 \dot x(t)= A_{q(t)} x(t) + B_{q(t)} u(t),~ x(0) = x_0 \\
y(t) = C_{q(t)} x(t), 
\end{cases}
\end{eqnarray}  
where $q$ is a continuous function $T \to \mathcal{B} \subset \mathbb
R^s$, $\mathcal{B} = [\underline{\rho}_1, \overline{\rho}_1] \times
\hdots \times [\underline{\rho_s}, \overline{\rho_s}]$ for some
$\underline{\rho}_i < \overline{\rho}_i$, and $A_q,B_q,C_q$ are
assumed to be continuous functions of $q$. In the following, we will
refer to the system \eqref{LPVsystem} as LPV.

The $L_2$ norm of the LPV system is
\[
||Y^{LPV}||_{L_2}=\sup_{q \in C(T, \mathcal{B})} \sup_{||u||_2=1}
||Y^{LPV}(u,q)||_{2}.
\]
Furthermore, \cite{WoodGoddardGlover96} uses similar LMI
characterization of the $L_2$ norm. Specifically, if $
||Y^{LPV}||_{L_2} < \gamma$ then there exists a solution $P
> 0$ to
\[\forall q \in \mathcal{B}~ : ~\mathbf G_{\gamma}(q, LPV, P) < 0.\]
The controllability and observability grammians in
\cite{WoodGoddardGlover96} are solutions $\mathcal Q$ and $\mathcal P$
to $\mathbf O(q,LPV, \mathcal Q) \le 0$ and $\mathbf C(q,LPV, \mathcal
P) \le 0$ for all $q \in \mathcal{B}$. Hence, they are also similar to
the continuous-time grammians used in this paper.


Despite apparent similarity between the LPV and LSS formulations, the two systems are not compatible as $q$ signal in \eqref{LPVsystem} is assumed to be a continuous function. 
Hence, the results of \cite{WoodGoddardGlover96} are not directly applicable to
linear switched systems.
Nonetheless, the proof of the error bounds in
\cite{WoodGoddardGlover96} does not use the continuity of the signal
$q$ nor the continuous dependence of $A_q,B_q,C_q$ on $q$; hence, the proof technique can be
adapted to the switched case.  As a result, the error bounds for the
balanced truncation provided in \cite{WoodGoddardGlover96} are
similar to \eqref{DBalanceClaim8:eq1}.  Note however, that the style of the proof
of Theorem \ref{DBalanceClaim8}
 is closer to that of \cite{SandbergThesis} than to \cite{WoodGoddardGlover96}.
 Moreover, counterparts of Theorem \ref{BalancClaim4} and Theorem \ref{BalancClaim7} and
Lemma \ref{BalancClaim1}, Lemma \ref{BalancClaim3} can be found in 
\cite{WoodGoddardGlover96}.
While
\cite{WoodGoddardGlover96} did state that their model reduction
procedure and the estimate of $L_2$ norm do not change under a
state-space isomorphism, the analysis in \cite{WoodGoddardGlover96} does not conclude that the
existence of LMI estimates of the $L_2$ norms, or indeed the existence and
singular values of the 
grammians is independent of the choice of state-space
realizations.  In fact, the latter would be difficult, since there seem to be no realization
theory for the type of LPV systems which is considered in \cite{WoodGoddardGlover96}.
Note that realization theory of certain classes of LPV systems was developed by
\cite{Toth1}.

In \cite{BeckThesis}, model reduction of uncertain discrete-time systems
was investigated. A \emph{structured uncertain system} was viewed as a Linear
Fractional Transformation (LFT),
\[ M \star \Delta = D + C \Delta (I - A \Delta)^{-1} B,\] where $\Delta:
l_2(\mathbb N,\mathbb R^n) \to l_2(\mathbb N,\mathbb R^n)$ represents
the uncertainty, and $M =  \left[
\begin{array}{c|c}
A &B \\ \hline
C &D
\end{array}
\right]$.  The norm of the system $M$ is the supremum of the
$H_{\infty}$ norms of $G \star \Delta$, where $\Delta$ is any element
of a bounded set of structured disturbances.

The model reduction procedure presented in \cite{BeckThesis} and the
corresponding error bounds are similar to the ones presented here for
discrete-time linear switched systems.  The main steps of the proofs
are also similar.  Nonetheless, the precise relationship between the
results of \cite{BeckThesis} and the ones presented above is not yet
clear.  In an attempt to clarify this connection, we represent a
linear switched system as a structured uncertain system as follows.

Let the structured uncertain system associated with a discrete-time
switched system $\Sigma=\SwitchSysLin$ be
\[
M_{\Sigma}=
\left[
\begin{array}{c|c}
A &B \\ \hline
C &D
\end{array}
\right]
\left[
\begin{array}{cccc|ccc}
0 & A_1 & \ldots & A_{\QNUM}   & B_1 & \ldots & B_{\QNUM}\\
I & 0 & \ldots & 0  & 0 & \ldots & 0 \\
\vdots & \vdots & \ldots & \vdots  & \ldots & \vdots \\
I & 0 & \ldots & 0  & 0 & \ldots & 0  \\
\hline
0 & C_1 & \ldots & C_\QNUM  & 0 & \ldots & 0 
\end{array}
\right]
\]
We fix an infinite switching sequence $v=q_0q_1 \cdots \in \mathcal{Q}$. 
For a $k\in \mathbb N$, we define the operators $d_q^v:l_2
(\mathbb N, \mathbb{R}^k) \rightarrow l_2(\mathbb N, \mathbb{R}^k)$ as
follows
$$d_q^v(z)(t)=\begin{cases}
              z(t)\chi_{\{\tau| q_{\tau} = q \}}(t-1) & \mbox{ if } t > 0  \\
             0  & \mbox{ if } t = 0.
 \end{cases}
$$
where
$\chi_{\{\tau| q_{\tau} = q \}}$ is the characteristic function of the
set $\{\tau \in \mathbb N|~ q_{\tau} = q \}$. 
Let $\delta^{-1}: l_2(\mathbb N, \mathbb{R}^n) \to \l_2(\mathbb N, \mathbb{R}^n),~ \delta^{-1}(z)(t)=z(t-1)$ for $t > 1$ and $\delta^{-1}(z)(0) = 0$, be the backward shift operator.
By abuse of notation, we
will apply the operators $d_q^v$ and $\delta^{-}$ to signals in Euclidean spaces of
different dimensions without specifying the dimension.

We define the uncertainty structure
\[ \Delta^v=\diag(\delta^{-1},d_1^{v}, \ldots, d_{\QNUM}^v). \] 

For an inputs $u\in l_2(\mathbb N,
\mathbb{R}^m)$, define $w_q = w_q(u) =d_q^v(\delta^{-1}(u))$ 
and let $x=(x_0,x_1,\ldots)$ be the state trajectory of $\Sigma$
which corresponds to the inputs $u$ and switching sequence $v$. If
$\Sigma$ is quadratically stable, then by Lemma \ref{BalancClaim6} $x \in l_2(\mathbb{R}^{n})$.
Define $z_q=\delta^{-1}(x)$. With this notation,

\[
\left[
\begin{array}{c}
x \\ z_1 \\ z_2 \\ \vdots \\ z_\QNUM \\ \hline
\delta^{-1}(y)
\end{array}
\right]
=
\left[
\begin{array}{c|c}
A\Delta^{v}  & B \\
\hline C\Delta^{v} & 0
\end{array}
\right]
\left[
\begin{array}{c}
x \\ z_1 \\ z_2 \\ \vdots \\ z_\QNUM \\ \hline w_1 \\ \vdots \\ w_{\QNUM}
\end{array}
\right],
\]
where $A,B,C$ are the corresponding sub-matrices of $M_{\Sigma}$. As a 
consequence, $\delta^{-1}(y)=(M_{\Sigma} \star \Delta^v)w$
with $w=w(u) = (w_1^T,\ldots,w_{\QNUM}^T)^T$.

We notice that the induced $l_2$ operator norm of $\Delta^v$ is $1$ for
any $v \in \mathcal{Q}$; hence, following the notion of \cite{BeckThesis}, $\Delta^v \in \mathbf B_{\Delta}$ with 
\[\mathbf B_{\Delta} = \{\Delta: l_2 (\mathbb N,
\mathbb{R}^{n(\QNUM+1)}) \to l_2 (\mathbb N,
\mathbb{R}^{n(\QNUM+1)})|~ ||\Delta|| \le 1 \}.\] Furthermore,
$l_2$-norm of $w = (w_1, \hdots, w_D)$ equals the $l_2$ norm of
$\delta^{-1}(u)$. 
In \cite{BeckThesis}, the norm $||M_{\Sigma}||$ of
$M_{\Sigma}$ is defined as follows
\[ ||M_{\Sigma}|| = \sup_{\Delta \in \mathbf{B}_{\Delta}} \sup_{w \in
  l_2(\mathbb{N}, \mathbb{R}^{m\QNUM}), ||w||_{2}=1} ||(M_{\Sigma}
\star \Delta)w||_2.
\]

We claim that $||Y^{\Sigma}||_{\mathcal{L}_2} \le ||M_{\Sigma}||$.
Indeed, from $\delta^{-1}(y)=(M_{\Sigma} \star \Delta^{v})w(u)$, it follows that
$||\delta^{-1}(y)||_2 \le ||M_{\Sigma} || \cdot ||w(u)||_{2}$.
Since $||w(u)||_{2}=||\delta^{-1}(u)||_{2}$, it then follows that
\( ||\delta^{-1}(y)||_2 \le ||M_{\Sigma} || \cdot ||\delta^{-1}(u)||_{2} \)
But $\delta^{-1}(y)$ is the response of $\Sigma$ to the input $\delta^{-1}(u)$ and 
the switching signal $\delta^{-1}(v)$.  Since the range of all possible choices of
$\delta^{-1}(u)$ and $\delta^{-1}(v)$ covers the whole space 
$\mathcal{U} \times \mathcal{Q}$, we get that
\[ \forall (u,q) \in \mathcal{U} \times \mathcal{Q}: ||Y^{\Sigma}(u)||_{2} \le
||M_{\Sigma}|| ||u||_{2} \] which implies 
that $||Y^{\Sigma}||_{\mathcal{L}_2} \le ||M_{\Sigma}||$.

Below, we state what we know about the relationship between the model
reduction procedure of this paper and that of \cite{BeckThesis}.
\begin{Lemma}
\label{beck:theo}
Let $\Sigma$ be a discrete-time \LSS,  and let $M_{\Sigma}$ be the
associated structured uncertain system. Then the following holds.
\begin{enumerate}
\item If $M_{\Sigma}$ is stable according to the
terminology of \cite{BeckThesis}, then $\Sigma$ is strongly stable.

\item The \LSS\ $\Sigma$ is minimal if and only if $M_{\Sigma}$ is
minimal according to the terminology of \cite{BeckThesis}.

\item Assume that the block-diagonal matrix $\CP=\mathrm{diag}(P_1,\ldots,P_{\QNUM+1})$, $0 <
P_{i} \in \mathbb{R}^{n \times n}$, $i=1,\ldots,\QNUM+1$
(resp. $\Q=\mathrm{diag}(Q_1,\ldots,Q_{\QNUM+1})$ $0 < Q_{i} \in
\mathbb{R}^{n \times n}$, $i=1,\ldots,\QNUM+1$) is a controllability
(resp. observability) grammian of $M_{\Sigma}$ according to the
terminology of \cite{BeckThesis}\footnote{Note that according to \cite{BeckThesis}, controllability and observability grammians are by definition block-diagonal}.

Then the following holds.
\begin{enumerate}
\item
$P_1$ (resp. $Q_1$) is a controllability
(resp. observability) grammian of $\Sigma$, and
\begin{equation}
\label{beck:theo:eq1}
\begin{split}
& \sum_{q \in Q} (A_{q}P_1 A^T_q+B_qB_q^T) - P_1 \le 0 \\
& \sum_{q \in Q} (A^T_{q}Q_1A_q+C^T_qC_q) - Q_1 \le 0. 
\end{split}
\end{equation}
\item If $M_{\Sigma}$ is balanced, i.e., $\CP=\Q$ is diagonal, then
$\Sigma$ is balanced.
\item If $M_{\Sigma}$ is balanced and $\hat{\Sigma}$ is the result of
applying Procedure \ref{BalancedTruncate} with the grammians
$P_1=Q_1$, then $M_{\hat{\Sigma}}$ is a result of applying balanced
truncation to $M_{\Sigma}$.
\end{enumerate}
\end{enumerate}
\end{Lemma}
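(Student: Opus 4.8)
The plan is to translate the four notions of \cite{BeckThesis} --- stability, minimality, (block-diagonal) controllability/observability grammians, and balanced truncation --- into the block structure of $M_{\Sigma}$, and to exploit the very sparse shape of its $A$-block: every block-row $i\ge 2$ of $A$ equals $[\,I_n,0,\dots,0\,]$, while its first block-row equals $[\,0,A_1,\dots,A_{\QNUM}\,]$, the $B$-block has $[\,B_1,\dots,B_{\QNUM}\,]$ in its first block-row and zeros elsewhere, and the $C$-block equals $[\,0,C_1,\dots,C_{\QNUM}\,]$. Because of this, for any block-diagonal $X=\diag(X_1,\dots,X_{\QNUM+1})$ the matrices $A^{T}XA$, $AXA^{T}$, $A^{T}XA+C^{T}C$, $AXA^{T}+BB^{T}$ have diagonal blocks that are either a single term $X_1$ (coming from a row $i\ge2$) or a sum $\sum_{q}A_q X_{q+1}A_q^{T}$-type expression (coming from the first row). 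Hence each structured LMI for $M_{\Sigma}$ collapses, on its diagonal blocks, to one inequality involving only the first block together with $\QNUM$ ``coupling'' inequalities linking the first block to each of the others; chaining these, and using that the discarded summands are positive semidefinite, is what produces the single-mode inequalities for $\Sigma$.

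For part 1, I would use that stability of $M_{\Sigma}$ in the sense of \cite{BeckThesis} gives a block-diagonal $X=\diag(X_1,\dots,X_{\QNUM+1})>0$ solving the associated structured discrete-time Lyapunov inequality; reading off its diagonal blocks yields $X_{q+1}>A_q^{T}X_1A_q$ for every $q\in Q$ and $X_1>\sum_{q\in Q}X_{q+1}$, whence $\sum_{q\in Q}A_q^{T}X_1A_q<\sum_{q\in Q}X_{q+1}<X_1$, i.e.\ $X_1-\sum_{q\in Q}A_q^{T}X_1A_q>0$; by the converse part of Lemma \ref{nice_stab:proof} with $F_q=A_q$ this forces $\sum_{q\in Q}A_q^{T}\otimes A_q^{T}$ to be stable, so $\Sigma$ is strongly stable. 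Part 3(a) is the same calculation for the block-diagonal controllability grammian $\CP=\diag(P_1,\dots,P_{\QNUM+1})$ of $M_{\Sigma}$: the diagonal blocks of $A\CP A^{T}+BB^{T}-\CP\le0$ give $P_{q+1}\ge P_1$ for all $q$ and $\sum_{q}A_qP_{q+1}A_q^{T}+\sum_{q}B_qB_q^{T}\le P_1$; substituting the smaller $P_1$ for each $P_{q+1}$ gives the first inequality of \eqref{beck:theo:eq1}, and then, discarding the positive semidefinite terms indexed by $q'\neq q$, one gets $A_qP_1A_q^{T}+B_qB_q^{T}-P_1\le0$, i.e.\ $P_1\in\mathbf{C}(\Sigma)$. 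The observability statement follows from the dual computation on $A^{T}\Q A+C^{T}C-\Q\le0$, which yields $Q_{q+1}\ge A_q^{T}Q_1A_q+C_q^{T}C_q$ and $Q_1\ge\sum_q Q_{q+1}$, hence the second inequality of \eqref{beck:theo:eq1} and $Q_1\in\mathbf{O}(\Sigma)$.

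For part 3(b): if $M_{\Sigma}$ is balanced then $\CP=\Q$ is block-diagonal with diagonal blocks, so $P_1=Q_1$ is a single diagonal matrix which by part 3(a) is simultaneously a controllability and an observability grammian of $\Sigma$; hence $\Sigma$ is balanced. For part 3(c), I would first observe that running Procedure \ref{BalancedTruncate} on $\Sigma$ with $\CP=\Q=P_1$ merely permutes the state coordinates so as to order the diagonal of $P_1$ decreasingly, after which truncation deletes a fixed set of $n-r$ coordinates; inspecting the block pattern of $M_{\Sigma}$ then shows that deleting exactly these $n-r$ coordinates inside each of the $\QNUM+1$ diagonal state blocks of $M_{\Sigma}$ reproduces $M_{\hat{\Sigma}}$ (each $A_q,B_q,C_q$ is replaced by its principal restriction $\hat{A}_q,\hat{B}_q,\hat{C}_q$, and the identity blocks $I_n$ become $I_r$). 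Finally, the grammian $\CP=\Q=\diag(\Lambda_1,\dots,\Lambda_{\QNUM+1})$ restricted to the retained coordinates of each block is again block-diagonal with diagonal blocks, and it remains a grammian of $M_{\hat{\Sigma}}$ by the elementary block-partition argument used in the proof of Lemma \ref{BalancedTruncateLemma1} (the restricted inequality differs from a principal submatrix of the full one by a positive semidefinite term supported on the discarded coordinates); hence $M_{\hat{\Sigma}}$ is a result of applying balanced truncation to $M_{\Sigma}$.

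For part 2, the plan is to invoke the characterization of minimality of a structured uncertain system from \cite{BeckThesis} in terms of structured reachability and structured observability, and to identify these with span-reachability and observability of $\Sigma$ via Theorem \ref{sect:real:lemma1}: the sparse form of $A$ together with the masking structure of $\Delta^{v}$ should make the relevant structured reachability (resp.\ observability) space of $M_{\Sigma}$ reduce, on the first block, to $\IM\mathcal{R}(\Sigma)$ (resp.\ $\ker\mathcal{O}(\Sigma)$). Thus $M_{\Sigma}$ is minimal in the sense of \cite{BeckThesis} iff $\Rank\mathcal{R}(\Sigma)=n$ and $\Rank\mathcal{O}(\Sigma)=n$, i.e.\ iff $\Sigma$ is span-reachable and observable, which by Theorem \ref{Theo:min} is equivalent to minimality of $\Sigma$. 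The hard part will be exactly this identification: the reachable set of a multidimensional/LFT realization is not simply the column span of $[\,B,AB,A^{2}B,\dots\,]$, so one has to match Beck's structured rank condition against the span condition of Theorem \ref{sect:real:lemma1} by hand; parts 1 and 3, by contrast, are routine block-matrix computations once the definitions of \cite{BeckThesis} are unwound.
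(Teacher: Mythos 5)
Your treatment of parts 1 and 3 is correct and is essentially the paper's own argument: read off the diagonal blocks of the structured Lyapunov and grammian inequalities for a block-diagonal $X=\diag(X_1,\ldots,X_{\QNUM+1})$, chain the block-one inequality with the coupling inequalities ($X_1>\sum_{q}X_{q+1}$ and $X_{q+1}>A_q^TX_1A_q$, resp.\ $P_1\le P_{q+1}$ and $Q_{q+1}\ge A_q^TQ_1A_q+C_q^TC_q$), invoke the converse direction of Lemma \ref{nice_stab:proof} for strong stability, and drop positive semidefinite summands to pass from \eqref{beck:theo:eq1} to the single-mode grammian inequalities for $\Sigma$; items 3(b) and 3(c) are likewise handled as in the paper, which also settles 3(c) by truncating the corresponding coordinates in every diagonal block and appealing to the truncation formula of \cite{BeckThesis}.

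Part 2, however, is not actually proved. You correctly reduce it to matching Beck's structured reachability/observability conditions against Theorem \ref{sect:real:lemma1}, but you explicitly leave that matching as the ``hard part'' to be done ``by hand,'' only asserting that the structured spaces \emph{should} reduce, on the first block, to $\IM\mathcal{R}(\Sigma)$ and $\ker\mathcal{O}(\Sigma)$. That identification is the entire content of the paper's proof of this item: writing the sub-blocks of $M_\Sigma$ as $A_{1,q+1}=A_q$, $A_{q+1,1}=I_n$, $G_1=\widetilde{B}$, $C_{q+1}=C_q$ (all other sub-blocks zero), one checks that a product $A_{i,i_k}\cdots A_{i_2,i_1}G_{i_1}$ is nonzero only along index paths alternating between block $1$ and blocks $q+1$, in which case it equals $A_v\widetilde{B}$ for the corresponding word $v\in Q^{*}$, and dually that $C_{i_k}A_{i_k,i_{k-1}}\cdots A_{i_1,i}$ is either zero or of the form $C_qA_v$; Remark \ref{lin:rem1.5}, Theorem \ref{sect:real:lemma1} and Theorem \ref{Theo:min} then convert Beck's rank conditions into span-reachability and observability, hence minimality, of $\Sigma$. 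Note also that Beck's minimality is an algebraic condition on the realization (its structured reachability and observability matrices), not a statement about the particular operators $\Delta^{v}$, so the ``masking structure of $\Delta^{v}$'' plays no role in this step. Until this block-path computation is written out, part 2 remains a plan rather than a proof.
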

The proof of Lemma \ref{beck:theo} is presented in \ref{appl10}.
One is tempted to try to use \cite{BeckThesis} for model reduction of
\SLSS. However, this leads to the following challenges.
\begin{itemize}
\item In order to apply the methods of \cite{BeckThesis}, $M_{\Sigma}$
has to be stable. By Lemma \ref{beck:theo}, stability of $M_{\Sigma}$
implies strong stability of $\Sigma$. Note that strong stability is a
more restrictive property than quadratic stability. Hence, the scope
of applicability of \cite{BeckThesis} appears to be smaller than that
of the current paper.

\item Even if $M_{\Sigma}$ is stable, we face restrictions. While by
Lemma \ref{beck:theo}, grammians of $M_{\Sigma}$ yield grammians of
$\Sigma$, it is not clear that the converse holds. Hence, the error
bound obtained by using \cite{BeckThesis} might be more conservative.

\item Let $\hat{M}$ be the result of balanced
truncation applied to $M_{\Sigma}$, as described in \cite{BeckThesis}.
Then $\hat{M}$  is a structured uncertain system, but it is not
clear how to convert the $\hat{M}$ to an \LSS. In fact, even
balancing might destroy the very specific structure of $M_{\Sigma}$
and hence make it difficult to interpret the balanced version of
$M_{\Sigma}$ as an \LSS.

\item Finally, while for balanced $M_{\Sigma}$, the results of
Procedure \ref{BalancedTruncate} and the procedure from
\cite{BeckThesis} are comparable, it is not very clear how these two
procedures are related in the general case.
\end{itemize}
Despite the difficulties mentioned above, exploring the relationship with
\cite{BeckThesis} remains worthwhile. In particular, the results of
Lemma \ref{beck:theo} indicate that the relationship might be much closer than
it appears at the first sight.
Intuitively, it is also clear why \cite{BeckThesis} seems to yield more conservative
results: the behavior of an \LSS\ corresponds to a \emph{subset} of behaviors of
a structured uncertain system.
Hence, the model reduction procedure of \cite{BeckThesis} has to preserve a much richer
behavior than the one presented in this paper.
To sum up, despite numerous similarities, it is unclear if
\cite{BeckThesis} can be used for model reduction of \SLSS.

Concerning the work of \cite{Kotsalis}, the main difference is that we
consider deterministic systems, while \cite{Kotsalis} considers
stochastic systems with switching modeled as a Markov process on
$Q$. The nice grammians of the present paper correspond to the
grammians of \cite{Kotsalis}, if we associate with the discrete-time
deterministic $\Sigma$ the following stochastic system
\[
\Sigma_{\mathrm{st}} : \left\{
\begin{split}
& \widetilde{x}(t+1)=\frac{1}{\sqrt{p}}A_{\theta(t)}\widetilde{x}(t)+\frac{1}{\sqrt{p}}B_{\theta(t)}u(t) \\
& \widetilde{y}(t)=\frac{1}{\sqrt{p}}C_{\theta(t)}\widetilde{x}(t),
\end{split}\right.
\]
where $\theta(t) \in Q$ is an identically distributed independent
process, $u(t)$ is deterministic, $x_0=0$, and $p=P(\theta(t)=q) > 0$
for all $q \in Q$.
In \cite{Kotsalis}, the norm of the system is smaller than $\gamma$ if
$\sum_{t=0}^{\infty} E[||\widetilde{y}(t)||^2] \le \gamma^2
||u||^2_{2}$. 
To ensure that the system norm is finite, the stochastic systems at hand
are assumed to be mean-square stable and only inputs $u \in
l_2(\mathbb{N},\mathbb{R}^m)$ are considered.  With the correspondence
above, the balancing procedure in our work becomes similar to that of
\cite{Kotsalis}. This is summarized in the following lemma.
\begin{Lemma}
\label{kotsalis:theo}
Consider the discrete-time \LSS\ $\Sigma$ and let $\Sigma_{\mathrm{st}}$ be the
associated stochastic system. Then the following holds.
\begin{enumerate}
\item
 \label{kotsalis:theo:part1}

 If $\Sigma$ is strongly stable if and only if $\Sigma_{\mathrm{st}}$ is
mean-square stable according to \cite{Kotsalis}.
\item 
 \label{kotsalis:theo:part2}

$\CP$ (resp. $\Q$) is a controllability (resp. observability)
grammian of $\Sigma_{\mathrm{st}}$ according to the terminology of
\cite{Kotsalis} if and only if 
\begin{equation}
\label{kotsalis:theo:eq1}
\begin{split}
& \sum_{q \in Q} (A_{q}\CP A^T_q+B_qB_q^T) - \CP \le 0 \\
& \sum_{q \in Q} (A^T_{q}\Q A_q+C^T_qC_q) - \Q \le 0. \\
\end{split}
\end{equation}
In particular, controllability and observability grammians of
$\Sigma_{\mathrm{st}}$ are controllability and observability grammians
of $\Sigma$. Conversely, nice controllability and nice observability
grammians of $\Sigma$ are controllability and observability grammians
of $\Sigma_{\mathrm{st}}$.
\item 
 \label{kotsalis:theo:part3}

The balanced reduction algorithm presented in \cite{Kotsalis} coincides
with Procedure \ref{BalancedTruncate}, if the latter is applied to
grammians of \eqref{kotsalis:theo}.
\item 
 \label{kotsalis:theo:part4}

If the norm of $\Sigma_{st}$ is $\gamma$ according to
\cite{Kotsalis}, then $||Y^{\Sigma}||_{l_2} \le \gamma$. 
\end{enumerate}
\end{Lemma}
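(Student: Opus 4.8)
The plan is to push every notion from \cite{Kotsalis} through the explicit description of $\Sigma_{\mathrm{st}}$, using the single algebraic fact that makes the two pictures coincide: since $\theta(t)$ is identically distributed with $P(\theta(t)=q)=p$ for every $q$, the factor $1/p$ produced by squaring the scaling $1/\sqrt p$ is cancelled exactly by the probability weight $p$ in each expectation over $\theta(t)$. So first I would recall, specialised to the i.i.d.\ switching of $\Sigma_{\mathrm{st}}$, the four notions from \cite{Kotsalis} occurring in the statement: mean-square stability (Schur stability of the second-moment recursion), the controllability and observability grammian conditions, the balancing step, and the bounded-real characterisation behind the system norm. In the i.i.d.\ case the coupled, per-mode grammians of \cite{Kotsalis} collapse to a single $n\times n$ matrix, which is why the grammians in the statement are single matrices rather than tuples.

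For part \ref{kotsalis:theo:part1}: with $u\equiv 0$ and $X_t=\mathbf E[\widetilde x(t)\widetilde x(t)^T]$, independence of $\theta(t)$ and $\widetilde x(t)$ gives $X_{t+1}=\tfrac1p\sum_{q}p\,A_qX_tA_q^T=\sum_{q}A_qX_tA_q^T$, so $\Sigma_{\mathrm{st}}$ is mean-square stable iff the map $X\mapsto\sum_q A_qXA_q^T$ is Schur stable, i.e.\ (vectorising) iff $\sum_q A_q\otimes A_q$ is stable, equivalently iff $\sum_q A_q^T\otimes A_q^T$ is stable, which is the definition of strong stability of $\Sigma$. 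For part \ref{kotsalis:theo:part2}: the same cancellation turns the controllability-grammian condition of \cite{Kotsalis} for $\Sigma_{\mathrm{st}}$ (written with $\tfrac1{\sqrt p}A_q$, $\tfrac1{\sqrt p}B_q$ and weights $p$) into $\sum_q(A_q\CP A_q^T+B_qB_q^T)-\CP\le 0$, i.e.\ into \eqref{kotsalis:theo:eq1}, and the analogous computation with $A_q^T,C_q$ yields the observability line. The remaining assertions of part \ref{kotsalis:theo:part2} then follow from the positive semidefiniteness of each summand, exactly as in the remark after Definition \ref{DTniceGram}: $A_q\CP A_q^T+B_qB_q^T\le\sum_r(A_r\CP A_r^T+B_rB_r^T)\le\CP$, so \eqref{kotsalis:theo:eq1} forces \eqref{ContContrGramd} mode by mode; conversely a nice grammian satisfies \eqref{kotsalis:theo:eq1} with equality and is therefore a grammian of $\Sigma_{\mathrm{st}}$.

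Part \ref{kotsalis:theo:part3} I expect to be essentially by inspection: the balancing map $\MORPH=\Lambda^{1/2}K^TU^{-1}$ of Procedure \ref{BalancedTruncate} is built only from a factorisation of $\CP$ and a diagonalisation of $U^T\Q U$, and by part \ref{kotsalis:theo:part2} the grammians fed into it are precisely the grammians of $\Sigma_{\mathrm{st}}$; since conjugation by $\MORPH$ and the subsequent truncation commute with the scalar $1/\sqrt p$ (so that the stochastic system built from the truncated $\hat\Sigma$ is the truncation of the stochastic system built from $\Sigma$), the two procedures return the same reduced model. For part \ref{kotsalis:theo:part4} I would invoke the stochastic bounded real lemma underlying the norm in \cite{Kotsalis}: if the norm of $\Sigma_{\mathrm{st}}$ is at most $\gamma$, then for every $\gamma'>\gamma$ there is $P>0$ with
\[
\begin{bmatrix}\sum_q(A_q^TPA_q+C_q^TC_q)-P & \sum_q A_q^TPB_q\\[2pt] \sum_q B_q^TPA_q & \sum_q B_q^TPB_q-\gamma'^2 I\end{bmatrix}\le 0,
\]
the $1/p$'s again cancelling against the weights. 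Each summand $\left[\begin{smallmatrix}A_q & B_q\\ C_q & 0\end{smallmatrix}\right]^{T}\diag(P,I)\left[\begin{smallmatrix}A_q & B_q\\ C_q & 0\end{smallmatrix}\right]$ is positive semidefinite, hence is dominated by $\diag(P,\gamma'^2 I)$, which is exactly $\mathbf G_{\gamma'}(q,\Sigma,P)\le 0$ for all $q$ (cf.\ \eqref{BalancClaim6:eq1d}). Summing the induced dissipation inequality along any trajectory of $\Sigma$ with $x(0)=0$ and using $P>0$ gives $\|Y^{\Sigma}(u,q)\|_2\le\gamma'\|u\|_2$ for all $(u,q)$, hence $\|Y^{\Sigma}\|_{l_2}\le\gamma'$; letting $\gamma'\downarrow\gamma$ finishes the argument (alternatively one uses a non-strict version of Lemma \ref{BalancClaim6}).

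I expect part \ref{kotsalis:theo:part4} to be the main obstacle, because a direct comparison of output energies does not work. Expanding $\mathbf E[\|\widetilde y(t)\|_2^2]$ over the i.i.d.\ switching path by path identifies it with $\sum_{|w|=t+1}\|y^w(t)\|_2^2$, the output of $\Sigma$ along the word $w$ when fed the reweighted input $\hat u(s)=p^{s/2}u(s)$; and $\|u\|_2^2=\sum_s p^{-s}\|\hat u(s)\|_2^2\ge\|\hat u\|_2^2$. So a gain bound for $\Sigma_{\mathrm{st}}$ only bounds $\|Y^\Sigma(\hat u,\cdot)\|_2$ by $\gamma$ times this strictly larger weighted norm, which is weaker than the claim and not even defined for all $\hat u\in l_2$. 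This is what forces the proof through the bounded-real/LMI characterisation rather than an energy comparison, and it also means the argument is only as clean as the bookkeeping of which precise definitions of stability, grammians, balancing and norm \cite{Kotsalis} uses.
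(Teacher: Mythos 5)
Your treatment of parts \ref{kotsalis:theo:part1}--\ref{kotsalis:theo:part3} is essentially the paper's own argument: the paper also exploits the cancellation of the $1/p$ produced by the $1/\sqrt{p}$ scaling against the probability weight $p$ (it writes the LMIs as $\sum_{q}p\bigl(\tfrac{1}{p}A_q^T P A_q\bigr)-P$ etc.\ and invokes Lemma \ref{nice_stab:proof} for part \ref{kotsalis:theo:part1}, where you instead phrase it via the second-moment recursion, an equivalent step), it derives the ``in particular'' claims of part \ref{kotsalis:theo:part2} from the same mode-by-mode domination of each positive semidefinite summand, and for part \ref{kotsalis:theo:part3} it, like you, simply transports the balancing isomorphism $\MORPH$ and the truncation to the scaled stochastic matrices. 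The genuine divergence is part \ref{kotsalis:theo:part4}. The paper argues directly with output energies: it claims that on the event $\{\theta_0\cdots\theta_t=v\}$ the stochastic output is a single scalar multiple $(1/\sqrt{p})^{t}$ of the deterministic output $Y(u,v)$, deduces $E[\widetilde{y}^T(t)\widetilde{y}(t)]=\sum_{v}\|Y(u,v)\|_2^2\ge\|Y(u,v)\|_2^2$, and sums over $t$; no bounded-real lemma, no LMI, and no limiting argument $\gamma'\downarrow\gamma$ are used. Your proposal instead routes through the stochastic bounded-real characterisation of the norm in \cite{Kotsalis}, dominates each summand by $\mathrm{diag}(P,\gamma'^2 I)$ to obtain the per-mode inequality \eqref{BalancClaim6:eq1d}, and concludes via a non-strict version of Lemma \ref{BalancClaim6}. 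This is a more roundabout proof and it imports one external ingredient (the exact form of the LMI characterisation in \cite{Kotsalis}, valid under the mean-square stability standing assumption there) that the paper's direct computation avoids; on the other hand, your diagnostic of why the naive energy comparison is delicate is accurate -- the contribution of the input at time $s$ to $\widetilde{y}(t)$ carries the factor $(1/\sqrt{p})^{t-s+1}$, which depends on $s$, so the path-wise identity used in the paper really identifies $E[\|\widetilde{y}(t)\|_2^2]$ with deterministic output energies for the reweighted input $\hat{u}(s)=p^{s/2}u(s)$ (it is exact as stated only when the input is concentrated at a single time), and your LMI detour sidesteps this issue entirely. So: same proof for parts \ref{kotsalis:theo:part1}--\ref{kotsalis:theo:part3}, a different and arguably more robust (if heavier) proof for part \ref{kotsalis:theo:part4}, provided you verify that \cite{Kotsalis} indeed supplies the bounded-real LMI in the form you invoke.
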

The proof of Lemma \ref{kotsalis:theo} is presented in \ref{appl11}.
From Lemma \ref{kotsalis:theo}, it follows that the error bound for
the balanced truncation in \eqref{DBalanceClaim8:eq1} follows from the
error bound derived in \cite{Kotsalis}, if one uses nice
grammians. However, in \cite{Kotsalis} the questions related to
minimality and dependence of the grammians on state-space realization
were not discussed. Note that the results \cite{Kotsalis} are
directly applicable only to strongly stable linear switched systems,
while the results of the current paper are formulated for
quadratically stable \SLSS. 
Furthermore, note that \cite{Kotsalis} is applicable only to a subset of grammians. For this
reason, the model reduction procedure from \cite{Kotsalis}, when
applied to deterministic \SLSS\ via the embedding above,  is likely
to yield a more conservative error bound.
This is not suprising, since 
\cite{Kotsalis} addresses model reduction of
stochastic systems, of which deterministic systems form a subclass. 


\bibliographystyle{elsarticle-num} 

\appendix

\section{Technical proofs}\label{app}

For some of the proofs below we will need the following simple
consequence of using Schur complements.
\begin{Lemma}
\label{Claim0}
Let $A,P \in \mathbb{R}^{n \times n}$, $S \in \mathbb{R}^{k \times
  n}$, $P > 0$. Let $\epsilon \in \{<,\le\}$ Then the following holds.
\begin{enumerate}
\item $A^TP+PA+S^TS\  \epsilon\  0$, if and only if $AP^{-1} +P^{-1}A^T+P^{-1}S^TSP^{-1}
\ \epsilon\ 0$, or, equivalently
\[
\begin{bmatrix}
P^{-1}A^T+AP^{-1} & P^{-1}S^T \\
SP^{-1} & -I
\end{bmatrix} \epsilon 0.
\]
\item $-P+A^TPA+S^TS\ \epsilon\ 0$, if and only if
\[ \begin{bmatrix}
-P^{-1}+AP^{-1}A^T & -AP^{-1}S^T \\
-SAP^{-1} & -I+SP^{-1}S^T
\end{bmatrix}\ \epsilon\ 0.
\]
\end{enumerate}
\end{Lemma}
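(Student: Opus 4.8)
The plan is to deduce both equivalences from two elementary facts. First, \emph{congruence invariance}: for a nonsingular $T$ and a symmetric $M$ one has $M\ \epsilon\ 0$ if and only if $T^{T}MT\ \epsilon\ 0$, and this holds uniformly for $\epsilon\in\{<,\le\}$ since $x\mapsto Tx$ is a bijection. Second, the \emph{Schur complement rule}: if $Z$ is symmetric and negative definite (hence invertible), then
\[
\begin{bmatrix} X & Y \\ Y^{T} & Z\end{bmatrix}\ \epsilon\ 0 \iff X-YZ^{-1}Y^{T}\ \epsilon\ 0 ,
\]
together with the analogous statement when the $(1,1)$ block is negative definite. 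Crucially this is valid for both $\epsilon=<$ and $\epsilon=\le$ precisely because the pivot block is definite, so no generalized-inverse subtleties arise. Since $P>0$ is symmetric, $P^{-1}$ is symmetric as well, a fact used throughout.

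For item~1 I would first conjugate by $P^{-1}$: congruence invariance gives that $A^{T}P+PA+S^{T}S\ \epsilon\ 0$ is equivalent to $P^{-1}(A^{T}P+PA+S^{T}S)P^{-1}\ \epsilon\ 0$, and the latter matrix is precisely $AP^{-1}+P^{-1}A^{T}+P^{-1}S^{T}SP^{-1}$. For the equivalence with the displayed block matrix I would apply the Schur complement rule with pivot block $Z=-I$: the Schur complement of $-I$ equals $(P^{-1}A^{T}+AP^{-1})-(P^{-1}S^{T})(-I)^{-1}(SP^{-1})=AP^{-1}+P^{-1}A^{T}+P^{-1}S^{T}SP^{-1}$, which closes the chain of equivalences.

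For item~2 I would write $-P+A^{T}PA+S^{T}S$ as $\begin{bmatrix}A^{T}&S^{T}\end{bmatrix}\,\diag(P,I)\,\begin{bmatrix}A\\S\end{bmatrix}-P$ and apply the Schur complement rule in the ``reverse'' direction (pivot $-\diag(P^{-1},I)$) to see that this is $\ \epsilon\ 0$ if and only if the $3\times 3$ block matrix
\[
\begin{bmatrix} -P & A^{T} & S^{T} \\ A & -P^{-1} & 0 \\ S & 0 & -I\end{bmatrix}\ \epsilon\ 0 .
\]
I would then apply the Schur complement rule once more, this time eliminating the $(1,1)$ pivot $-P<0$; the resulting Schur complement is $\diag(-P^{-1},-I)-\begin{bmatrix}A\\S\end{bmatrix}(-P)^{-1}\begin{bmatrix}A^{T}&S^{T}\end{bmatrix}$, which simplifies to $\begin{bmatrix} -P^{-1}+AP^{-1}A^{T} & AP^{-1}S^{T}\\ SP^{-1}A^{T} & -I+SP^{-1}S^{T}\end{bmatrix}$. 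A final congruence by $\diag(I,-I)$ flips the sign of the off-diagonal blocks and produces exactly the matrix displayed in item~2.

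No step is genuinely difficult; the one thing to watch is that every pivot used in a Schur-complement step ($-I$ in item~1, and $-\diag(P^{-1},I)$ then $-P$ in item~2) is negative definite, hence invertible, which is what lets a single argument cover both the strict ($<$) and the non-strict ($\le$) cases. The remainder is bookkeeping of congruences and of the symmetry of $P^{-1}$. I expect the Schur-complement phrasing above to be the cleanest to write out; one could alternatively verify item~2 by a direct block Gaussian elimination on the $3\times 3$ matrix, but that amounts to the same computation.
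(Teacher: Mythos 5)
Your proposal is correct and follows essentially the same route as the paper's proof: congruence by $P^{-1}$ plus a Schur complement with pivot $-I$ for item~1, and for item~2 the same passage through a $3\times 3$ block matrix followed by a second Schur complement, the only difference being that you eliminate the $(1,1)$ pivot $-P$ and fix signs by a final congruence with $\diag(I,-I)$, whereas the paper permutes the blocks with $\mathcal{S}=\begin{bmatrix}0 & I_{k+n}\\ I_n & 0\end{bmatrix}$ and eliminates the bottom-right block — an immaterial bookkeeping difference. Your observation that the definiteness of each pivot is what makes the argument uniform in $\epsilon\in\{<,\le\}$ is exactly the point (and, incidentally, your computation confirms that the $(2,1)$ entry in the lemma's displayed matrix should read $-SP^{-1}A^T$).
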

\begin{proof}[Proof of Lemma \ref{Claim0}]
The first statement follows by multiplying $A^TP+PA+S^TS$ by $P^{-1}$
from left and right and noticing that since $P^{-1}$ is symmetric,
$A^TP+PA+S^TS \epsilon 0$ is equivalent to $P^{-1}(A^TP+PA+S^TS)P^{-1} \epsilon  0$.  By
taking Schur complements, it follows that $AP^{-1}
+P^{-1}A^T+P^{-1}S^TSP^{-1}\ \epsilon\ 0$ is equivalent to
\[
\begin{bmatrix}
P^{-1}A^T+AP^{-1} &  P^{-1}S^T \\
SP^{-1} & -I
\end{bmatrix}\ \epsilon\ 0.
\]

In order to prove the second statement, the discrete case, notice that
$-P+A^TPA+S^TS\ \epsilon\  0$ is equivalent to
\[
-(P- \begin{bmatrix} A \\ S
\end{bmatrix}^T \begin{bmatrix} P & 0 \\ 0 & I_{k}
\end{bmatrix}
\begin{bmatrix}
A \\ S
\end{bmatrix})\ \epsilon\ 0.
\]
Using Schur complements again, the latter is equivalent to
\[
- \begin{bmatrix}
P & A^T & S^T \\
A & P^{-1} & 0 \\
S & 0 & I_k
\end{bmatrix}\ \epsilon\ 0
\]
Multiplying the latter inequality by $\mathcal{S}=\begin{bmatrix} 0 & I_{k+n} \\
I_{n} & 0 \end{bmatrix}$ from right and by $\mathcal{S}^T$ from left, we get
the following equivalent LMI
\[
-\begin{bmatrix}
P^{-1} & 0 & A \\
0 & I_k & S \\
A^T & S^T & P
\end{bmatrix}\ \epsilon\ 0.
\]
By using Schur complement again, from this we obtain that the latter
LMI is equivalent to
\[ \begin{bmatrix}
-P^{-1}+AP^{-1}A^T &  -AP^{-1}S^T \\
-SP^{-1}A^T & -I_k +SP^{-1}S^T
\end{bmatrix}\ \epsilon\ 0.
\]
\end{proof}

\section{}\label{appl1}

\begin{proof}[Proof of Lemma \ref{BalancClaim9}]
\textbf{(ii) implies (i)}
  It is clear that if $\forall q \in Q: \mathbf{O}(q,\Sigma,\Q) < 0$, then
  $\forall q \in Q: \mathbf{S}(q,\Sigma,\Q) < 0$.

\textbf{(iii) implies (i)}
   If $\forall q \in Q: \mathbf{C}(q,\Sigma,\CP) < 0$, then 
   by taking $A=A_q^T$ and $S=B_qB_q^T$, it follows from Lemma
   \ref{Claim0} that 
  $\forall q \in Q: \mathbf{S}(q,\Sigma,\CP^{-1}) < 0$.

\textbf{(i) implies (ii) and (iii)}.
We present the proof separately for the discrete-time and for the
continuous-time case.
   
\textbf{Continuous-time:} Assume that for some $P > 0$, \( \forall q
\in Q: A^T_qP+PA_q < 0. \) Then for any $q \in Q$, the exists a scalar
$\gamma_q > 0$ such that \( A^T_qP+PA_q+\gamma_q M_q < 0. \) Take
$M_q=C_q^TC_q$ and let $\gamma = \min\{\gamma_q \mid q \in Q\}$.  Then
\( A^T_qP+PA_q+\gamma M_q < 0. \) Define $\mathcal{S}=\frac{1}{\gamma}
P$. Then
\[ \forall q \in Q: A^T_q\mathcal{S}+\mathcal{S} A_q + M_q < 0. \]
By choosing $M$ to be $C_q^TC_q$ and $\Q=\mathcal{S}$, we obtain a
solution to \eqref{ContObsGram}.  If we choose $M_q=PB_qB_q^TP$, then
using the fact that $\mathcal{S}=\frac{1}{\gamma}P$ and Lemma
\ref{Claim0}, we get that
\[ \forall q \in Q:
A_q\mathcal{S}^{-1}+\mathcal{S}^{-1}A^T_q+B_qB_q^T\gamma^{2} < 0. \] By
choosing
$\CP=\frac{1}{\gamma^{2}}\mathcal{S}^{-1}=\frac{1}{\gamma}P^{-1}$ and using
Schur complements, we
get that \eqref{ContContrGram} holds.

\textbf{Discrete-time:} If $P > 0$ is such that \( P-A_{q}^TPA_q >
0\), then for any $M_q \ge 0$, there exists $\gamma_q > 0$ such that
\( P-A_q^TPA_q -\gamma_q M_q > 0\). In particular, by taking
$\gamma=\min\{\gamma_q \mid q \in Q\}$, \( P -A_q^TPA_q -\gamma M_q >
0 \), or, in other words
\[ \forall q \in Q: A^T_q\mathcal{S}A_q + M_q - \mathcal{S} <  0, \] where
$\mathcal{S}=\frac{1}{\gamma} P$.  If we choose $M_q=C_qC_q^T$ and set
$\Q=\mathcal{S}$, then we get that \eqref{ContObsGram} holds.  From
the second part of Lemma \ref{Claim0} it follows that if
$P-A_{q}^TPA_q > 0$, then $P^{-1}-A_{q}P^{-1}A^T_q > 0$. By
interchanging $A_q$ and $A_q^T$ and using $P^{-1}$ instead of $P$, we
can repeat the argument above. We thus get that for any $M_q \ge 0$,
there exists $\CP > 0$ such that
\[ \forall q \in Q: A_q\CP A^T_q + M_q - \CP < 0. \] By taking
$M_q=B_qB_q^T$, it follows that \eqref{ContContrGram} holds.
\end{proof}

\section{}\label{appl2}

\begin{proof}[Proof of Lemma \ref{BalancClaim6}]

In continuous-time, the proof that existence of a solution $P > 0$ to
\eqref{BalancClaim6:eq1} implies that $||Y^{\Sigma}||_{\mathcal{L}_2}$ exists
and $||Y^{\Sigma}||_{\mathcal{L}_2} \le \gamma$ follows from \cite[Theorem
1]{HirataHespanhaDec09} by taking $V(x)=x^T P x$. A similar argument
can be done for the discrete-time case.

In order to be self-contained, we present an elementary proof of the
implication \[ \forall q \in Q: \mathbf{G}_{\gamma}(q,P,\Sigma) <0
\implies ||Y^{\Sigma}||_{\mathcal{L}_2} < \gamma \] both for the continuous- and
discrete-time case.

Fix an input and switching signal $(u,q) \in \mathcal{U} \times
\mathcal{Q}$ and denote by $x$ and $y$ the corresponding state and
output trajectory of $\Sigma$.  We have to show that for $u \in
\mathcal{U}$, the output $y$ belongs to $L_2(T,\mathbb{R}^{p})$ for
continuous-time case and to $l_2(\mathbb{N},\mathbb{R}^p)$ for the
discrete-time case.

Notice that if we define 
\begin{align*}
||y||_{2}=
\begin{cases}
\sqrt{\lim_{t \rightarrow \infty}
  \int_0^{t} ||y(s)||_{2}^{2}ds} &\text{cont.}\\
\sqrt{\lim_{t \rightarrow \infty} \sum_{s=0}^{t} ||y(s)
  ||_{2}^{2}ds}&\text{disc.}\\
\end{cases}
\end{align*}
then $||y||_{2}$ is well-defined
(possibly equal $+\infty$) and $y$ belongs to the $L_2(T,\mathbb{R}^p)$ 
(resp. $l_2(\mathbb{N},\mathbb{R}^{p})$)
if and only if $||y||_{2} <+\infty$. In the latter case, $||y||_{2}$ is just the
standard $L_2$ and $l_2$ norm respectively.
 Hence, it is enough to
show $||y||_{2}^{2} \le \gamma^{2}||u||_{2}^{2}$, if we use the extended definition of
$||y||_2$ described above.

Assume $P$ is a solution to \eqref{BalancClaim6:eq1}. Define
\begin{align*}
\Delta(x^T(t)P&x(t))\\
&=
\begin{cases}
\frac{d}{dt} (x^T(t)Px(t)) \mbox{\ \  (cont.) } \\
x^T(t+1)Px(t+1)-x^T(t)Px(t) \mbox{\ \ (disc.) }
\end{cases}
\end{align*}
Then a simple calculation reveals that both for continuous and
discrete-time cases,
\begin{align*}
\Delta(x^T(t)Px(t)) &= \
\begin{bmatrix} x(t) \\ u(t) \end{bmatrix}^T G_{\gamma}(q(t),P,\Sigma)
\begin{bmatrix} x(t) \\ u(t) \end{bmatrix}\\
&\quad +\gamma^2||u(t)||_{2}^2- x(t)^TC_{q(t)}^TC_{q(t)}x(t) \\
&\le \gamma^2||u(t)||^2-||y(t)||^2
\end{align*}
Notice that for continuous-time systems 
\begin{align*}
\int_0^{t}\Delta(x^T(s)Px(s))&=x^T(t)Px(t)-x^T(0)Px(0)\\
&=x^T(t)Px(t).  
\end{align*}
Similarly,
for the discrete-time systems, 
\begin{align*}
\sum_{s=0}^{t-1}\Delta(x^T(s)Px(s))&=x^T(t)Px(t)-x^T(0)Px(0)\\
&=x^T(t)Px(t).  
\end{align*}
In both
cases, we used the fact that $x(0)=0$ (see page \pageref{zero}). Recall that for
continuous-time case 
\begin{align*}
&||u||_{2}^{2} = \int_{0}^{\infty} ||u(s)||_{2}^{2}ds \ge \int_{0}^{t}
||u(s)||_{2}^{2}ds.  
\end{align*}
Similarly, for the discrete-time case,
\begin{align*}
&||u||_{2}^{2} =
\sum_{s=0}^{\infty} ||u(s)||_{2}^{2} \ge \sum_{s=0}^{t-1}
||u(s)||_{2}^{2}.  
\end{align*}
Hence, by taking integral in the continuous-time
case and sums in the discrete-time case, we obtain
\[
\begin{split}
& x^T(t)Px(t) \le \gamma^2 ||u||^2_2-\begin{cases} \int_0^{t} ||y(s)||_2^{2}ds & \mbox{ (cont.)} \\
 \sum_{s=0}^{t-1} ||y(s)||_2^2 & \mbox{(disc.)} 
 \end{cases}.
\end{split}
\]
Since $P >0$, $x^T(t)Px(t) \ge 0$ and thus the inequality above yields
\[ \begin{cases} \int_0^{t} ||y(s)||_2^{2}ds & \mbox{ (cont.)} \\
 \sum_{s=0}^{t-1} ||y(s)||_2^2 & \mbox{(disc.)}
 \end{cases}
 \le \gamma^2 ||u||^2_2. 
\]
By taking limit of the left-hand side as $t \rightarrow \infty$, it follows that
$||y||^2_2 \le \gamma^{2} ||u||_2^2$.

If $\Sigma$ is quadratically stable, then by Lemma \ref{BalancClaim9}, 
there exists $\CP > 0$ such that $\forall q \in Q: \mathbf{C}(q,\Sigma,\CP) < 0$.
By taking $A=A^T_q$, $S=B^T_q$ in
Lemma \ref{Claim0}, it then follows that for all $q \in Q$,
\[ G_{1}(q,\Sigma,\CP^{-1}) - \begin{bmatrix} C_q^TC_q & 0 \\
0 & 0 \end{bmatrix} < 0. \] Since $C_q^TC_q \ge 0$, it then follows
that there exists a large enough $\gamma > 0$ such that
\begin{equation}
\label{BalancClaim6:pf:eq1}
G_{1}(q,\Sigma,\CP^{-1}) +
(\frac{1}{\gamma^{2}}-1)\begin{bmatrix} C_q^TC_q & 0 \\ 0 &
0 \end{bmatrix} < 0.  
\end{equation}
Notice now that
\[ \gamma^2 G_{1}(q,\Sigma,\CP^{-1})= G_{\gamma}(q,\Sigma,
\gamma^2\CP^{-1})+ (\gamma^{2}-1)\begin{bmatrix} C_q^TC_q & 0
\\ 0 & 0 \end{bmatrix}
\]
By multiplying \eqref{BalancClaim6:pf:eq1} with $\gamma^2$ and using the equality
above,
\[
\begin{split}
0 &> G_{\gamma}(q,\Sigma, \gamma^2\CP^{-1})+
(\gamma^{2}-1)\begin{bmatrix} C_q^TC_q & 0 \\ 0 & 0 \end{bmatrix}\\
&\quad 
+(1-\gamma^{2})\begin{bmatrix} C_q^TC_q & 0 \\ 0 &
0 \end{bmatrix}\\ 
&=G_{\gamma}(q,\Sigma,\gamma^2\CP^{-1}).
\end{split}
\]
Hence, $\gamma^{2}\CP^{-1}$ satisfies \eqref{BalancClaim6:eq1}.
\end{proof}

\section{}\label{appl3}

\begin{proof}[Proof of Lemma \ref{inv:theo:lemma2}]
In the proof we will used the notation \eqref{LSSreach:eq1} of
Procedure \ref{LSSreach}.  The lemma follows from the following
observations.\\
\textbf{Observation 1}
\[
A^T_{q}P+PA_{q} = \begin{bmatrix}
(A_q^{R})^TP_{11} +P_{11}A_q^R & \star \\
\star & \star
\end{bmatrix}
\]
\textbf{Observation 2}
\[
\begin{split}
A_q^TPA_q &= \begin{bmatrix} (A_q^R)^T P_{11} & \star \\
\star & \star
\end{bmatrix}
\begin{bmatrix} A_{q}^{\mathrm R} & A^{'}_{q} \\ 0 & A^{''}_{q} \end{bmatrix} \\
& = \begin{bmatrix} (A_q^R)^T P_{11}A_q^R & \star \\
\star & \star
\end{bmatrix}
\end{split}
\]
\textbf{Observation 3}
\[ C_q^TC_q = \begin{bmatrix} (C_q^R)^TC_q^R & \star \\
\star & \star
\end{bmatrix}
\]
\textbf{Observation 4}
\[
PB_q = \begin{bmatrix} P_{11}B_q \\ \star \end{bmatrix}.
\]
\textbf{Observation 5}
\[
B_q^TPB_q = (B_{q}^R)^TP_{11}B^R_q.
\]

If $\mathbf{K}=\mathbf{S}$, then \textbf{Observation 1} implies the
statement of the lemma for the continuous-time case and \textbf{Observation 2}
implies the statement of the lemma for discrete-time case.

Finally, by combining \textbf{Observation 3}, \textbf{Observation 1}
and \textbf{Observation 4}, it follows that for continuous time 
\begin{equation}
\label{inv:theo:lemma2:eq1}
\begin{split}
\mathbf{G}_{\gamma}&(q,\Sigma,P)\\
&=
\begin{bmatrix} (A_{q}^R)^TP_{11}+P_{11}A_{q}^R+(C_q^R)^TC^R_q  & \star &  P_{11}B^R_q \\
\star                                          & \star & \star  \\
(B_q^R)^TP_{11} & \star & -\gamma^2I
\end{bmatrix}.
\end{split}
\end{equation}

It is easy to see that \( \mathbf{G}_{\gamma}(q,\Sigma,P) < 0 \)
implies that
\[
\begin{bmatrix} (A_{q}^R)^TP_{11}+P_{11}A_{q}^R+(C_q^R)^TC^R_q  & P_{11}B^R_q \\
(B_q^R)^TP_{11} & -\gamma^2I
\end{bmatrix} < 0,
\]
and the latter is equivalent to
$\mathbf{G}_{\gamma}(q,\Sigma_{R},P_{11}) < 0$.  By combining
\textbf{Observation 3}, \textbf{Observation 2} and \textbf{Observation
  5} for discrete-time case, we obtain that
\begin{equation}
\label{inv:theo:lemma2:eq2}
\begin{split}
\mathbf{G}_{\gamma}&(q,\Sigma,P)\\
&=
\begin{bmatrix} (A_{q}^R)^TP_{11}A_{q}^R+(C_q^R)^TC^R_q & \star &
(A_{q}^{R})^TP_{11}^TB^R_q \\
\star                                          & \star & \star  \\
A_{q}^{R}P_{11}B^R_{q} & \star & B_{q}^{R}P_{11}(B_{q}^R)^T-\gamma^2I
\end{bmatrix}.
\end{split}
\end{equation}

Hence, by a similar argument as for the continuous-time case,
$\mathbf{G}_{\gamma}(q,\Sigma,P)$ $ < 0$ implies
$\mathbf{G}_{\gamma}(q,\Sigma_r,P_{11}) < 0$.

By combining \textbf{Observation 3}, \textbf{Observation 2}, and 
\textbf{Observation 1}, it follows
that
\[
  \mathbf{O}(q,\Sigma,P)=\begin{bmatrix} \mathbf{O}(q,\Sigma_r,P_{11}) & \star \\
                                          \star & \star 
                          \end{bmatrix},
\]
and hence $\mathbf{O}(q,\Sigma,P) \le 0$ implies $\mathbf{O}(q,\Sigma_r,P_{11}) \le 0$.

Finally, we will show that if $\forall q \in Q: \mathbf{C}(q,\Sigma,P^{-1}) \le 0$,
then $\forall q \in Q: \mathbf{C}(q,\Sigma_r,P_{11}^{-1}) \le 0$.
From Lemma \ref{Claim0} it follows that
$\forall q \in Q: \mathbf{C}(q,\Sigma_r,P^{-1}) \le 0$ if and only if
$\forall q \in Q: \mathbf{G}_{1}(q,\Sigma,P)-C_q^TC_q \le 0$.
From \textbf{Observation 3} and \eqref{inv:theo:lemma2:eq1} -- \eqref{inv:theo:lemma2:eq2} it follows that
$\forall q \in Q: \mathbf{G}_{1}(q,\Sigma,P)-C_q^TC_q \le 0$ implies 
$\forall q \in Q: \mathbf{G}_{1}(q,\Sigma_r,P_{11})-(C^R_q)^TC^R_q \le 0$.
From Lemma \ref{Claim0} it then follows that
$\mathbf{C}(q,\Sigma_r,P_{11}^{-1}) \le 0$ for all $q \in Q$.
\end{proof}

\section{}\label{appl4}

\begin{proof}[Proof Lemma \ref{inv:theo:lemma3}]
The first part of the statement follows directly from Lemma
\ref{Claim0} by taking $S=0$.
The second statement follows by noticing that $P \in \mathbf{O}(\Sigma) \iff P \in \mathbf{C}(\Sigma^T)$ and $P \in \mathbf{C}(\Sigma) \iff P \in \mathbf{O}(\Sigma^T)$.
The third statement can be seen as
follows.  For the continuous-time case, notice that $P \in
G_{\gamma}(\Sigma)$ is equivalent to
\begin{equation}
\label{inv:theo:lemma3:eq1}
\forall q \in Q: A_{q}^TP+PA_q+C_q^TC_q + \frac{1}{\gamma^2} PB_qB_q^TP < 0. 
\end{equation}
By applying Lemma \ref{Claim0} to \eqref{inv:theo:lemma3:eq1} it then
follows that \eqref{inv:theo:lemma3:eq1} is equivalent to
\begin{equation}
\label{inv:theo:lemma3:eq2}
\forall q \in Q: P^{-1}A_{q}^T+A_qP^{-1}+P^{-1}C_q^TC_qP^{-1} + \frac{1}{\gamma^2} B_qB_q^T < 0. 
\end{equation}
If we multiply \eqref{inv:theo:lemma3:eq2} by $\gamma^{2}$, we
immediately get that $R=\gamma^{2}P^{-1}$ satisfies
\[ \forall q \in Q:
A_qR+RA^T_q+B_qB_q^T+\frac{1}{\gamma^{2}}RC_q^TC_qR, \] and the latter
is equivalent to $R \in G_{\gamma}(\Sigma^T)$.

For the discrete-time case, notice that $P \in G_{\gamma}(\Sigma)$ is
equivalent to 
\begin{equation}
\label{inv:theo:lemma3:eq3}
\forall q \in Q: \begin{bmatrix}
-\frac{1}{\gamma^2}P+A_q^T\frac{1}{\gamma^{2}}PA_q+\frac{1}{\gamma^2}C_q^TC_q
&
A_q^T\frac{1}{\gamma^{2}}PB_q \\
(A_q^T\frac{1}{\gamma^{2}}PB_q)^T & B_q^T\frac{1}{\gamma^2}PB_q - I
\end{bmatrix} < 0
\end{equation}
From \eqref{inv:theo:lemma3:eq3} it follows that
\begin{equation}
\label{inv:theo:lemma3:eq4}
\forall q \in Q: \hat{P}-\hat{A}^T_q\hat{P}\hat{A}_q  > 0 
\end{equation}
where
\[ \hat{A}_q = \begin{bmatrix} A_{q} & B_q \\
\frac{1}{\gamma} C_q & 0
\end{bmatrix} \mbox{ and }
\hat{P}=\begin{bmatrix} \frac{1}{\gamma^2}P & 0 \\
0 & I
\end{bmatrix}.
\]
Applying the discrete-time part of Lemma \ref{Claim0} with $S=0$,
$P=\hat{P}$ and $A=\hat{A}_q$, we get that \eqref{inv:theo:lemma3:eq4}
is equivalent to
\begin{equation}
\label{inv:theo:lemma3:eq5}
\forall q \in Q: \hat{P}^{-1} - \hat{A}_q\hat{P}^{-1}\hat{A}_q^T > 0. 
\end{equation}
Using the definition of $\hat{A}_q$ and the fact that
$\hat{P}^{-1}=\mathrm{diag}(\gamma^{2}P^{-1}, I)$, it follows that
\eqref{inv:theo:lemma3:eq5} is equivalent to
\begin{equation}
\label{inv:theo:lemma3:eq6}
\forall q \in Q: \begin{bmatrix}
-R+A_qRA^T_q+B_q^TB_q &
\frac{1}{\gamma}A_q^TRC_q \\
(\frac{1}{\gamma}A_q^TRC_q)^T &  \frac{1}{\gamma^2}C_q^TRC_q - I
\end{bmatrix} < 0,
\end{equation}
where $R=\gamma^{2}P^{-1}$.  By multiplying
\eqref{inv:theo:lemma3:eq6} by $\diag(I_{n},\gamma I_p)$ from left and
right, it follows that \eqref{inv:theo:lemma3:eq6} is equivalent to
$\gamma^{2}P^{-1} \in G_{\gamma}(\Sigma^T)$.
\end{proof}

\section{}\label{appl5}

\begin{proof}[Proof of Lemma \ref{BalancClaim1}]
Fix a switching signal $q \in \mathcal{Q}$ and denote by $x(t)$ and $y(t)$ the state
trajectory of $\Sigma$ such that $x(0)=x$ and $u=0$, i.e.
$x=X^{\Sigma}_0(0,q)$ and $y=Y^{\Sigma}_0(0,q)$.
Define
\begin{align*}
\Delta(x^T(t)&\Q x(t))\\
&=
\begin{cases}
\frac{d}{dt} (x^T(t)\Q x(t)) & \mbox{\ \  (cont.) } \\
x^T(t+1)\Q x(t+1)-x^T(t)\Q x(t) & \mbox{\ \ (disc.) }
\end{cases}
\end{align*}
and denote
\[ \mathbf{S}(q,\Q)=\begin{cases}
A_q^T\Q +\Q A_q & \mbox{\ \  (cont.) } \\
A_q^T\Q A_q & \mbox{\ \ (disc.) }
\end{cases}.
\]
Then $\Q$ satisfies $\forall q \in Q: \mathbf{S}(q,\Q)\le - C_q^TC_q$
and \[\Delta(x^T(t)\Q x(t))=x^T(t)\mathbf{S}(q(t),\Q)x(t).\] Hence, it
follows that
\begin{equation}
\label{BalancClaim1:pf:eq1} 
\Delta(x^T(t)\Q x(t)) \le -x^T(t)C^T_{q(t)}C_{q(t)}x(t)=-||y(t)||_{2}^{2}. 
\end{equation}
Notice that
\[
x^T(t)\Q x(t) - x^T\Q x =
\begin{cases}
\int_0^t \Delta(x^T(s)\Q x(s))ds &\mbox{\ \  (cont.) } \\
\sum_{s=0}^{t-1} \Delta(x^T(t)\Q x(t)) &\mbox{\ \ (disc.) }
\end{cases}
\]
and that $x^T(t)\Q x(t) \ge 0$ and hence $-x^T\Q x \le x^T(t)\Q x(t) -
x^T\Q x$.  By taking integrals $\int_0^{t} ||y(s)||_{2}^2ds$ in the
continuous-time case and sums $\sum_{s=0}^{t-1} ||y(s)||_{2}^{2}$ in
the discrete-time case, and using \eqref{BalancClaim1:pf:eq1} it
follows that
\[ -x^T\Q x \le
\begin{cases}
-\int_{0}^{t} ||y(s)||^2_{2}ds &\text{(cont.)}\\
-\sum_{k=0}^{t} ||y(s)||^2_{2} &\text{(disc.)}
\end{cases}.
\]
By multiplying the inequality above by $-1$ the statement of the lemma
follows.
\end{proof}

\section{}\label{appl6}

\begin{proof}[Proof of Lemma \ref{BalancClaim3}]
Denote by $\Sigma_0$ the \BLSS $\Sigma_0$ obtained from $\Sigma$ by
replacing $C_q$, $q \in Q$ by zero matrices.  For any input $u \in \mathcal{U}$ and
switching signal $q \mathcal{Q}$, the state trajectory $x(t)$ of $\Sigma_0$ and
$\Sigma$ are the same, but the output trajectory $y_0$ of $\Sigma_0$ is
identically zero.  By taking $A=A_q^T$, $S=B_q^T$, $ q\in Q$, from
Lemma \ref{Claim0} it follows that $\CP^{-1}$ satisfies
\[ \forall q \in Q: \mathbf{G}_{1}(q,\CP^{-1},\Sigma_0) \le 0. \] Hence,
from the proof of Lemma \ref{BalancClaim6} (when applied to $\Sigma_0$
instead of $\Sigma$) it follows that
\[ x^T(t)\CP^{-1}x(t) \le ||u||_{2}^{2}-||y_0||_{2}^{2} =
||u||_{2}^{2}. \]
\end{proof}

\section{}\label{appl7}

\begin{proof}[Proof of Lemma \ref{nice_stab:proof}]
One can easily see that the matrix $\widetilde{A}=\sum_{q \in Q} F_q^T
\otimes F_q^T$ is in fact a matrix representation of the linear map
$\Z : \mathbb{R}^{n\times n} \to \mathbb{R}^{n\times n}$ defined as \[
\Z(V)=\sum_{q \in Q} F_q^TVF_q .\] This result is obtained by
identifying $\mathbb{R}^{n\times n}$ with $\mathbb{R}^{n^2}$, as it is
done in \cite[Section 2.1]{CostaBook}. As a consequence, the
eigenvalues of $\Z$ and $\widetilde{A}$ coincide. Since the
eigenvalues of $\Z$ are inside the unit circle, it follows from
\cite[Proposition 2.6]{CostaBook} that $P=\Z(P)+Q$ has a unique
solution. Notice that using the terminology of \cite[page
17]{CostaBook} $\Z$ is a Hermitian map and positive operator. Indeed,
if $V$ is symmetric, then so is $\Z(V)$ and if $V$ is positive
semi-definite, then so is $\Z(V)$. Hence, by \cite[Proposition
2.6]{CostaBook} the solution of $P=\Z(P)+\mathcal{G}$ is positive semi-definite,
and if $\mathcal{G} > 0$, then $P$ is positive definite.  Moreover, notice that
$\Z^{k}(\mathcal{G})=\sum_{w \in Q^{*}, |w|=k} F_{w}^T\mathcal{G}F_{w}$ and hence by
\cite[Proposition 2.6]{CostaBook}, the solution $P=\sum_{k=0}^{\infty}
\Z^k(\mathcal{G})=\sum_{w \in Q^{*}} F_{w}^T\mathcal{G}F_w$.

Conversely, assume that $P-\sum_{q \in Q} A^T_{q}PA_q > 0$ for some $P
> 0$.  Consider an $n \times n$ matrix $V$ such that $V \ge 0$.
Define the map $\K(V)=\sum_{q \in Q} A_qVA_q^T$ and notice that using
the coordinate representation of \cite[page 17]{CostaBook},
$\widetilde{A}^T=\sum_{q \in Q} F_q \otimes F_q$ is the matrix
representation of $\K$. Since taking transposes preserves eigenvalues,
it then follows that it is enough to show that $\widetilde{A}^T$ is a
stable matrix.  If we can show that $\lim_{k \rightarrow \infty}
\K^{k}(V) = 0$, then by \cite[Proposition 2.5]{CostaBook} it follows
that $\widetilde{A}^T$ is a stable matrix.  In order to show $\lim_{k
  \rightarrow \infty} \K^{k}(V) = 0$, we use a Lyapunov-like argument.
That is, we define $W(V)=\mathrm{tr}(V^TP)$ and we show that it
behaves like a Lyapunov function. More precisely, denote by
$\mathcal{P}$ the set of all $n \times n$ positive semi-definite
matrices.  Notice that $\K:\mathcal{P} \rightarrow \mathcal{P}$ is a
continuous map, if $\mathcal{P}$ is viewed as a metric space with the
metric $d(V_1,V_2)=||V_1-V_2||_{F}=\mathrm{tr}((V_1-V_2)^T(V_1-V_2))$.  
Notice moreover that $\K(0)=0$.
Hence, if $W$ satisfies the properties below, then by \cite[Theorem
2.12]{BhatiaSzego} the dynamical system \(V_{k+1}=\K(V_k) \) defined
on $\mathcal{P}$ is globally asymptotically stable for the equilibrium
point $0$, i.e. $\lim_{k \rightarrow \infty} \K^k(V) = 0$ for any $V
\in \mathcal{P}$.  The properties $W$ has to satisfy are the
following.
\begin{enumerate}
\item $W:\mathcal{P} \rightarrow \mathbb{R}$ is continuous, $W(S) \ge
0$ for any $S \in \mathcal{P}$,
\item $W(S)=0$ iff $S=0$, for all $S \in \mathcal{P}$,
\item $W(\K(S)) < W(S)$ for all $S \in \mathcal{P}$, $S \ne 0$,
\item $W$ is radially unbounded\footnote{Using the terminology of
  \cite{BhatiaSzego}, this property implies that $W$ is uniformly
  unbounded}, more specifically
\[ \lim_{||S||_F \rightarrow \infty} W(S)=+\infty. \]
\end{enumerate}
The first two properties follow from the definition.  To see \(
W(\K(V)) \le W(V) \mbox{ for any } V \ge 0 \), notice that
$V^{\frac{1}{2}}$ exists \footnote{here $V^{\frac{1}{2}}$ is the unique matrix
  such that $V=(V^{\frac{1}{2}})^2$)} 
$(V^{\frac{1}{2}})^T=V^{\frac{1}{2}}$ and that
\[
\begin{split}
W(V)&=\mathrm{tr}(V^TP)=\mathrm{tr}(VP)=\mathrm{tr}(V^{\frac{1}{2}}PV^{\frac{1}{2}})\\
&= \sum_{i=1}^{n} e^T_iV^{\frac{1}{2}}PV^{\frac{1}{2}}e_i \\
&\ge \sum_{i=1}^{n} \sum_{q \in Q} e^T_iV^{\frac{1}{2}}A^T_{q}PA_{q}V^{\frac{1}{2}}e_i \\
&= \sum_{q \in Q} \mathrm{tr}((V^{\frac{1}{2}}A^T_{q})PA_{q}V^{\frac{1}{2}}) \\
&= \sum_{q \in Q} \mathrm{tr} (A_{q}VA^T_{q}P)=\sum_{q \in Q} \mathrm{tr}((A_qVA_q^T)^TP)=W(\K(V)).
\end{split}
\]
In order to see that $W$ is radially unbounded, notice that there
exists $m > 0$ such that $P-mI > 0$. Hence for any $S \ge 0$,
\[
\begin{split}
W(S)&=\mathrm{tr}(S^TP)=\mathrm{tr}(SP)=\mathrm{tr}(S^{\frac{1}{2}}PS^{\frac{1}{2}})\\
&= \sum_{i=1}^{n} e^T_iS^{\frac{1}{2}}PS^{\frac{1}{2}}e_i \\
&\ge m \sum_{i=1}^{n} e_i^TSe_i =
\mathrm{tr}(V)=||S^{\frac{1}{2}}||^2_{F}
\end{split}
\]
Since the Frobenius norm is subadditive, it follows that $||S||_F \le
||S^{\frac{1}{2}}||^2$ and hence
\begin{equation}
\label{nice_stab:proof:eq2}
\forall S \ge 0:  m||S||_{F} \le W(S).
\end{equation}
Hence, $W(S)$ is radially unbounded, i.e., $\lim_{||S|| \rightarrow
  \infty} W(S)=+\infty$.
\end{proof}

\section{}\label{appl8}

\begin{proof}[Proof of Lemma \ref{nice_stab:min}]
Notice that state-space isomorphism preserves strong
stability. Indeed, if $\Sigma_1$ and $\Sigma_2$ are related by an
isomorphism $\MORPH$ and their corresponding matrices are $A_q$ and
$F_q$, $q \in Q$, then $F_q=\MORPH A_q \MORPH^{-1}$ and hence,
$\sum_{q \in Q} F^T_q \otimes F_q^T = (\MORPH^{-T} \otimes
\MORPH^{-T}) (\sum_{q \in Q} A_q^T \otimes A^T_q)(\MORPH^T \otimes
\MORPH^T)$. That is, $\sum_{q \in Q} F_q^T \otimes F_q^T$ and $\sum_{q
  \in Q} A_q^T \otimes A_q^T$ are similar matrices, and hence they
have the same eigenvalues.

Since all equivalent minimal realizations are isomorphic, it then
follows that it is enough to show that if $\Sigma$ is strongly stable,
then the result of application of Procedure \ref{LSSmin} is also
strongly stable.  Note that Procedure \ref{LSSmin} is just the
subsequent application of Procedure \ref{LSSreach} or Procedure
\ref{LSSobs}, hence it is enough to show that Procedures
\ref{LSSreach} -- \ref{LSSobs} preserve strong stability.  Recall from
Remark \ref{rem:dual} the notion of the dual system
$\Sigma^T$ and recall that if $\hat{\Sigma}$ is the result of applying
Procedure \ref{LSSreach} to $\Sigma^T$, then $\hat{\Sigma}^T$ is the
result of applying Procedure \ref{LSSobs} to $\Sigma$. Notice that the
matrices of $\Sigma^T$ are $A_q^T$, $q \in Q$, and $(\sum_{q \in Q}
A_q \otimes A_q)^T=\sum_{q \in Q} A_q^T \otimes A_q^T$. Since taking
transposes preserves eigenvalues, it then follows that an \BLSS is
stable if and only if its dual system is stable.  Hence, if we show
that Procedure \ref{LSSreach} preserves strong stability, then by a
duality argument we get that Procedure \ref{LSSobs} also preserves
strong stability.

Thus, it is left to show that Procedure \ref{LSSreach} preserves strong
stability.  If $\Sigma$ is strongly stable, then by Lemma
\ref{nice_stab:proof}, there exists $P > 0$ such that $P-\sum_{q \in Q}
A_q^TPA_q > 0$.  Consider the partitioning of the matrix $A_q$
described in Procedure \ref{LSSreach} to $\Sigma$, and let
$P=\begin{bmatrix} P_{11} & P_{12} \\ P_{21} & P_{22} \end{bmatrix}$
be a compatible partitioning of $P$, i.e., $P_{11}$ is $r \times r$.
From \textbf{Observation 1} in the proof of Lemma
\ref{inv:theo:lemma2} it then follows that
\[ P-\sum_{q \in Q} A_q^TPA_q =
\begin{bmatrix} P_{11}-\sum_{q \in Q} (A_q^R)^TP_{11}A_{q}^R & \star \\
\star & \star
\end{bmatrix}.
\]
Hence, $P-\sum_{q \in Q} A_q^TPA_q > 0$ implies that $P_{11}-\sum_{q
  \in Q} (A_{q}^R)^TP_{11}A_{q}^R > 0$. From $P > 0$ it follows that
$P_{11} > 0$. From Lemma \ref{nice_stab:proof}, it follows that
$\sum_{q \in Q} (A_{q}^R)^T \otimes (A_{q}^R)^T$ is stable, i.e., the
result of applying Procedure \ref{LSSreach} to $\Sigma$ is strongly
stable.
\end{proof}

\section{}\label{appl9}

\begin{proof}[Proof of Lemma \ref{niceGramLemma0}]
That nice observability grammian exists, it is unique and it is
positive semi-definite follows from Lemma \ref{nice_stab:proof} by
setting $\mathcal{G}=\sum_{q \in Q} C_q^TC_q$ and $F_q=A_q$.  The corresponding
statement for observability grammians follows from Lemma
\ref{nice_stab:proof}, by applying it to $F_q=A_q^T$, $q \in Q$ and
$\mathcal{G}=\sum_{q \in Q} B_qB_q^T$ and by noticing that $(A_q^T\otimes
A_q^T)^T=A_q \otimes A_q$; hence, $\sum_{q \in Q} A_q^T \otimes
A_q^T$ is stable if and only if $\sum_{q \in Q} A_q \otimes A_q$ is
stable.
\end{proof}

\section{}\label{appsandberg}

\begin{proof}[Proof of Lemma \ref{BalanceClaim8.1}]
The proof is inspired by the PhD thesis \cite{SandbergThesis}.
Without loss of generality, we assume that $\Sigma$ is already
balanced and hence $\Sigma_{\mathrm{bal}}=\Sigma$.  Assume that the
balanced observability and controllability grammians are of the
following form.
\[
\Lambda=\begin{bmatrix} \Lambda_1 & 0 \\ 0 & \beta
\end{bmatrix}.
\]
We use the notation of the partitioning in
\eqref{BalancedTruncate:eq1}. Moreover, we will use the continuous
time notation generically, e.g., $u(t)$ will denote either a
continuous time input or a discrete time input depending on the
context. 

Fix an input $u \in \mathcal{U}$ and a switching signal $q \in \mathcal{Q}$ 
and denote by $x(t)$ the
corresponding state trajectory of $\Sigma$ and by $\hat{x}(t)$ the
corresponding state trajectory of the reduced order model
$\hat{\Sigma}$. Consider the decomposition $x(t)=(x_1(t),x_2(t))$
where $x_1(t) \in \mathbb{R}^{n-1}$, and define
\[ z(t)=A_{q(t),21}\hat{x}(t)+B_{q(t),2}u(t). \] With this notation,
consider the following vectors
\[ X_c(t)=\begin{bmatrix} x_1(t)+\hat{x}(t) \\ x_2(t)
\end{bmatrix}
\mbox{, \ \ }
X_o(t)=\begin{bmatrix} x_1(t)-\hat{x}(t) \\
x_2(t)
\end{bmatrix}.
\]
An easy calculation reveals that 
(with $\delta$ the derivative- or forward shift operator)
\[
\begin{split}
& \delta X_c(t) = A_{q(t)}X_{c}(t)-\begin{bmatrix} 0 \\ z(t) \end{bmatrix} + 2B_{q(t)}u(t) \\
& \delta X_o(t) = A_{q(t)}X_{o}(t)+\begin{bmatrix} 0 \\
z(t) \end{bmatrix}
\end{split}.
\]
We will show that

\begin{Lemma}\label{l1}
\begin{equation}
\begin{split}\label{ClaimBalance8.1:6B}
4\beta^{2} &||u||_{2}^{2}\\
&\ge 
\begin{cases} \int_{0}^{t} X_o(s)^TC_{q(s)}^TC_{q(s)}X_o(s)ds &\mbox{\ \ (cont.)} \\
\sum_{s=0}^{t-1} X_o(s)^TC_{q(s)}^TC_{q(s)}X_o(s) &\mbox{\ \ (disc.)} 
\end{cases}
\end{split}
\end{equation}
\end{Lemma}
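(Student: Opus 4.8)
The plan is to play the observability and controllability grammian inequalities for the balanced system $\Sigma$ off against each other by means of two quadratic ``energy'' functionals evaluated along the trajectories, in the spirit of the Pernebo--Silverman/Glover argument for time-invariant systems and its time-varying extension in \cite{SandbergThesis}. Throughout write $e_z(t)=\begin{bmatrix}0\\ z(t)\end{bmatrix}\in\mathbb{R}^n$ with $0\in\mathbb{R}^{n-1}$, and recall that, $\Sigma$ being balanced, $\Lambda=\begin{bmatrix}\Lambda_1 & 0\\ 0 & \beta\end{bmatrix}$ is simultaneously an observability and a controllability grammian, i.e. it satisfies both \eqref{ContObsGramall} and \eqref{ContContrGramall}. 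Because $\Lambda$ is block-diagonal, any vector $X=(X^{(1)},X^{(2)})$ partitioned conformally satisfies $X^{T}\Lambda e_z=\beta X^{(2)}z$ and $X^{T}\Lambda^{-1}e_z=\beta^{-1}X^{(2)}z$, and the second block of both $X_c(t)$ and $X_o(t)$ equals $x_2(t)$.

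\emph{Observability side.} Put $V_o(t)=X_o(t)^{T}\Lambda X_o(t)\ge 0$. In continuous time, from $\delta X_o=A_{q}X_o+e_z$ and the observability inequality $A_q^{T}\Lambda+\Lambda A_q\le -C_q^{T}C_q$,
\[
\dot V_o = X_o^{T}(A_q^{T}\Lambda+\Lambda A_q)X_o + 2X_o^{T}\Lambda e_z \le -X_o^{T}C_q^{T}C_qX_o + 2\beta x_2 z .
\]
Integrating on $[0,t]$ and using $X_o(0)=0$ (since $x(0)=\hat x(0)=0$) together with $V_o(t)\ge 0$ gives $\int_0^{t}X_o^{T}C_q^{T}C_qX_o\,ds\le 2\beta\int_0^{t}x_2(s)z(s)\,ds$. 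In discrete time $\Delta V_o$ additionally carries the remainder $e_z^{T}\Lambda e_z=\beta z^2$; rewriting $2X_o^{T}A_q^{T}\Lambda e_z+\beta z^2 = 2X_o(t+1)^{T}\Lambda e_z-\beta z^2 = 2\beta x_2(t+1)z-\beta z^2$ shows this remainder has favorable sign, and summation over $s=0,\dots,t-1$ yields $\sum_{s=0}^{t-1}X_o^{T}C_q^{T}C_qX_o \le 2\beta\sum_{s=0}^{t-1}x_2(s+1)z(s)$.

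\emph{Controllability side.} Put $V_c(t)=X_c(t)^{T}\Lambda^{-1}X_c(t)\ge 0$. By Lemma~\ref{Claim0}(1) (with $P=\Lambda$, $A=A_q^{T}$, $S=B_q^{T}$) the controllability inequality for $\Sigma$ is equivalent to $A_q^{T}\Lambda^{-1}+\Lambda^{-1}A_q+\Lambda^{-1}B_qB_q^{T}\Lambda^{-1}\le 0$ in continuous time, and to the corresponding Schur-complement form (Lemma~\ref{Claim0}(2)) in discrete time. In continuous time, from $\delta X_c=A_qX_c-e_z+2B_qu$,
\[
\dot V_c \le -\|B_q^{T}\Lambda^{-1}X_c\|_2^2 + 4(B_q^{T}\Lambda^{-1}X_c)^{T}u - 2\beta^{-1}x_2 z \le 4\|u\|_2^2 - 2\beta^{-1}x_2 z ,
\]
the last step by completing the square, $-\|a\|_2^2+4a^{T}u=-\|a-2u\|_2^2+4\|u\|_2^2\le 4\|u\|_2^2$. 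Integrating with $X_c(0)=0$ and $V_c(t)\ge 0$ gives $2\beta^{-1}\int_0^{t}x_2 z\,ds\le 4\int_0^{t}\|u(s)\|_2^2\,ds\le 4\|u\|_2^2$, i.e. $\int_0^{t}x_2(s)z(s)\,ds\le 2\beta\|u\|_2^2$. Chaining with the observability estimate and using $\beta>0$,
\[
\int_0^{t}X_o^{T}C_q^{T}C_qX_o\,ds \le 2\beta\int_0^{t}x_2(s)z(s)\,ds \le 4\beta^2\|u\|_2^2 ,
\]
which is the continuous-time case of \eqref{ClaimBalance8.1:6B}; the discrete-time case is obtained in the same way from the two discrete estimates above.

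The two completion-of-square/integration steps in continuous time are routine. The main obstacle is the discrete-time bookkeeping: each $\Delta$ of a quadratic form $X^{T}\Lambda X$ or $X^{T}\Lambda^{-1}X$ produces not only a cross term but also a quadratic remainder, and one must verify that after telescoping all remainder terms combine with the correct sign so that the factor $4\beta^2$ is not degraded (this is why the $-\beta z^2$ term has to be carried along on the observability side); moreover, passing from the discrete controllability inequality to a usable inequality for $\Lambda^{-1}$ requires the Schur-complement manipulation of Lemma~\ref{Claim0}(2) rather than a direct conjugation by $\Lambda^{-1}$. One should also record at the outset that $x,\hat x$, and hence $z$, lie in the relevant $L_2$/$l_2$ spaces — by quadratic stability of $\Sigma$ and $\hat\Sigma$ via Lemma~\ref{BalancClaim6} and Lemma~\ref{BalancedTruncateLemma1} — so that all the integrals and sums above are finite and the limit $t\to\infty$ may be taken.
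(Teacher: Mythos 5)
Your proposal is correct and is essentially the paper's own argument: the same auxiliary vectors $X_o$, $X_c$, the same functionals $X_o^T\Lambda X_o$ and $X_c^T\Lambda^{-1}X_c$ combined with the balanced grammian inequalities via Lemma~\ref{Claim0}, zero initial conditions, telescoping, and chaining of the observability and controllability estimates. The only differences are cosmetic: you express the discrete cross term as $2\beta x_2(s+1)z(s)$ whereas the paper's Lemma~\ref{l2} keeps $2\beta X_o(s)^TA_{q(s)}^T\begin{bmatrix}0\\ z(s)\end{bmatrix}+\beta\|z(s)\|_2^2$ (the two agree up to a term $2\beta\|z(s)\|_2^2$ of favorable sign), and the discrete controllability bound you defer, $\sum_{s=0}^{t-1}x_2(s+1)z(s)\le 2\beta\|u\|_2^2$, does indeed come out of the same computation the paper performs in detail via its identity \eqref{ClaimBalance8.1:42DD}, so no gap remains.
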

Before proving Lemma~\ref{l1}, notice that \( C_{q(t)}X_o(t)=y(t)-\hat{y}(t) \), where $y(t)$ is
the output trajectory of $\Sigma$ and $\hat{y}(t)$ is the output
trajectory of $\hat{\Sigma}$.  Hence, \eqref{ClaimBalance8.1:6B} is
equivalent to
\[ ||y-\hat{y}||_{2}^{2} \le 4\beta^{2}||u||_{2}^{2} \]
From this Lemma~\ref{BalanceClaim8.1} follows.

In order to prove Lemma~\ref{l1}, we proceed as follows. Notice that
\begin{subequations}\label{ClaimBalance8.1:2B}
\begin{align*}
\delta(&X_o^T(t)\Lambda X_{o}(t))\\
&= 
\begin{cases}
2X_o(t)^TA^T_{q(t)}\Lambda X_o(t) +2\beta z(t)^Tx_2(t) &\mbox{\ \  (cont.) } \\
\\
X_o^T(t)A_{q(t)}^T \Lambda A_{q(t)}X_o(t)\\
\quad + 2\beta X_o(t)^TA^T_{q(t)}\begin{bmatrix} 0 \\
z(t) \end{bmatrix} + \beta ||z(t)||_2^2 &\mbox{\ \ (disc.)}.
\end{cases}
\end{align*}
\end{subequations}
In the derivation above we used that
\[ \Lambda\begin{bmatrix} 0 \\ z(t) \end{bmatrix} =
\beta \begin{bmatrix} 0 \\ z(t) \end{bmatrix}.
\]
Using this and the fact that $\Lambda$ satisfies the observability
grammian inequality \eqref{ContObsGram}, it follows that
\begin{align}
\delta(&X_o^T(t) \Lambda X_{o}(t)) \nonumber\\
&\le \begin{cases}
- X_o(t)^TC_{q(t)}^TC_{q(t)}X_o(t) + 2\beta z(t)^Tx_2(t) &\mbox{\ \ \
  (cont.) } \\
\\
- X_o(t)^TC_{q(t)}^TC_{q(t)}X_o(t) + X_o(t)^T\Lambda X_o(t)\\
\quad + 2\beta X_o(t)^TA^T_{q(t)}\begin{bmatrix} 0 \\ z(t) \end{bmatrix} +
\beta ||z(t)||_2^2 &\mbox{\ \ (disc.)}.
\end{cases}
\label{ClaimBalance9.1}
\end{align}
By noticing that $X_o(0)=0$ and hence
\begin{align*}
0 &\le X^T_o(t)\Lambda X_o(t)\\
&= 
\begin{cases}
\int_{0}^{t} \frac{d}{dr}(X_o^T(r) \Lambda X_{o}(r))|_{r=s}ds &\mbox{\
  \ (cont.) } \\
\\
\sum_{s=0}^{t-1}\Big(X_o^T(s+1) \Lambda X_{o}(s+1)\\ 
\qquad- X_o^T(s) \Lambda
X_{o}(s)\Big), &\mbox{\ \ (disc.)}
\end{cases}
\end{align*}
and combing it with \eqref{ClaimBalance9.1}, it follows that
\begin{subequations}\label{ClaimBalance9.2}
\begin{align*}
0 &\le - \int_{0}^{t} X_o(s)^TC_{q(s)}^TC_{q(s)}X_o(s)ds&\\
&\quad + 2\beta \int_{0}^{t} z(s)^Tx_2(s)ds, &\mbox{\ \ \ (cont.)}\\
& &\\
0 &\le -\sum_{s=0}^{t-1} X_o(s)^TC_{q(s)}^TC_{q(s)}X_o(s)&\\
&\quad + \sum_{s=0}^{t-1} (2 \beta X_o(s)^TA^T_{q(s)}\begin{bmatrix} 0 \\
z(s) \end{bmatrix} + \beta ||z(s)||_2^2). &\mbox{\ \ \
  (disc.)} 
\end{align*}
\end{subequations}
If we can show that
\begin{Lemma}
\label{l2}
\begin{align*}
4\beta&||u||_{2}^{2}\\
&\ge \begin{cases}
2\int_{0}^{t}z(s)^Tx_2(s)ds  &\mbox{\ \ \ (cont.)} \\
\sum_{s=0}^{t-1} (2X_o(s)^TA^T_{q(s)}\begin{bmatrix} 0 \\
z(s) \end{bmatrix} + ||z(s)||_2^2) &\mbox{\ \ \ (disc.)}
\end{cases}
\end{align*}
\end{Lemma}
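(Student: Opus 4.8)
The plan is to run a Lyapunov-type argument dual to the one that produced \eqref{ClaimBalance9.1}: there the observability grammian inequality for $\Lambda$ was used on $X_o$; here I will use the controllability grammian inequality for $\Lambda$ on $X_c$. Since $\Sigma=\Sigma_{\mathrm{bal}}$ is balanced, $\Lambda=\diag(\Lambda_1,\beta)$ is simultaneously a controllability and an observability grammian, so $\mathbf{C}(q,\Sigma,\Lambda)\le 0$ for all $q\in Q$. Applying Lemma~\ref{Claim0} with $A=A_q^T$, $S=B_q^T$, $P=\Lambda$ converts this into a statement about $\Lambda^{-1}$: in continuous time $A_q^T\Lambda^{-1}+\Lambda^{-1}A_q+\Lambda^{-1}B_qB_q^T\Lambda^{-1}\le 0$, and in discrete time the pencil form of Lemma~\ref{Claim0} translates, after taking the quadratic form in $(\xi,\eta)$, into $(A_q\xi-B_q\eta)^T\Lambda^{-1}(A_q\xi-B_q\eta)\le \xi^T\Lambda^{-1}\xi+||\eta||_2^2$ for all $\xi,\eta$. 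Throughout I will use the block structure of $\Lambda$ in the crucial form $\Lambda^{-1}\begin{bmatrix}0\\z(t)\end{bmatrix}=\beta^{-1}\begin{bmatrix}0\\z(t)\end{bmatrix}$.

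The object to track is $W(t):=X_c(t)^T\Lambda^{-1}X_c(t)\ge 0$, which satisfies $W(0)=0$ because $x(0)=\hat x(0)=0$. In continuous time, using $\delta X_c(t)=A_{q(t)}X_c(t)-\begin{bmatrix}0\\z(t)\end{bmatrix}+2B_{q(t)}u(t)$ and the symmetry of $\Lambda^{-1}$, a direct computation gives $\delta W(t)=X_c^T(A_q^T\Lambda^{-1}+\Lambda^{-1}A_q)X_c-2\beta^{-1}z(t)^Tx_2(t)+4u(t)^TB_q^T\Lambda^{-1}X_c$. The grammian inequality above bounds the first term by $-||B_q^T\Lambda^{-1}X_c||_2^2$, and completing the square absorbs it against the last term: $\delta W(t)\le -||B_q^T\Lambda^{-1}X_c-2u(t)||_2^2+4||u(t)||_2^2-2\beta^{-1}z(t)^Tx_2(t)\le 4||u(t)||_2^2-2\beta^{-1}z(t)^Tx_2(t)$. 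Integrating from $0$ to $t$, using $W(t)\ge 0$, $W(0)=0$ and $\int_0^t||u||_2^2\le||u||_2^2$, gives $2\beta^{-1}\int_0^t z^Tx_2\le 4||u||_2^2$, which upon multiplying by $\beta$ is the continuous-time claim.

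For discrete time I would expand $W(t+1)=(A_qX_c-\begin{bmatrix}0\\z(t)\end{bmatrix}+2B_qu)^T\Lambda^{-1}(\cdot)$ by first applying the pencil inequality with $\xi=X_c(t)$, $\eta=-2u(t)$ to get $(A_qX_c+2B_qu)^T\Lambda^{-1}(A_qX_c+2B_qu)\le W(t)+4||u(t)||_2^2$, and then handling the cross term $-2\beta^{-1}(A_qX_c+2B_qu)^T\begin{bmatrix}0\\z(t)\end{bmatrix}$ and the term $\beta^{-1}||z(t)||_2^2$. The resulting bound on $\delta W(t)$ still involves $X_c$, so the extra step is to substitute $X_c(t)=X_o(t)+\begin{bmatrix}2\hat x(t)\\0\end{bmatrix}$ together with $A_{q,21}\hat x(t)=z(t)-B_{q,2}u(t)$; this rewrites $X_c^TA_q^T\begin{bmatrix}0\\z(t)\end{bmatrix}$ as $X_o^TA_q^T\begin{bmatrix}0\\z(t)\end{bmatrix}+2||z(t)||_2^2-2u(t)^TB_{q,2}^Tz(t)$, after which the stray $u^TB_{q,2}^Tz$ contributions cancel and one is left with $\delta W(t)\le 4||u(t)||_2^2-\beta^{-1}\big(2X_o(t)^TA_{q(t)}^T\begin{bmatrix}0\\z(t)\end{bmatrix}+3||z(t)||_2^2\big)$. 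Dropping the surplus $2\beta^{-1}||z(t)||_2^2\ge 0$ and summing from $0$ to $t-1$ (again with $W(t)\ge0$, $W(0)=0$, $\sum||u||_2^2\le||u||_2^2$) yields $\beta^{-1}\sum_{s=0}^{t-1}\big(2X_o(s)^TA_{q(s)}^T\begin{bmatrix}0\\z(s)\end{bmatrix}+||z(s)||_2^2\big)\le 4||u||_2^2$, i.e.\ the discrete-time claim after multiplying by $\beta$.

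The main obstacle is exactly the discrete-time bookkeeping in the last paragraph: one must carefully track the $-\begin{bmatrix}0\\z(t)\end{bmatrix}$ term in the recursion for $X_c$ and perform the $X_c\to X_o$ conversion via $A_{q,21}\hat x=z-B_{q,2}u$, since this is where the cross terms in $u^TB_{q,2}^Tz$ must be shown to cancel and where the slack $||z(t)||_2^2\ge 0$ is needed to discard the excess. The continuous-time case is comparatively immediate once the grammian inequality is pushed through Lemma~\ref{Claim0}; the only subtlety shared by both cases is keeping the $\beta^{-1}$ factors straight so that the final multiplication by $\beta$ produces precisely the stated bound.
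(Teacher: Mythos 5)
Your proposal is correct and follows essentially the same route as the paper: it tracks $X_c^T\Lambda^{-1}X_c$ as a Lyapunov-type quantity, converts the controllability grammian inequality for $\Lambda$ via Lemma~\ref{Claim0}, uses $\Lambda^{-1}\begin{bmatrix}0\\ z\end{bmatrix}=\beta^{-1}\begin{bmatrix}0\\ z\end{bmatrix}$, performs the same $X_c\to X_o$ substitution with cancellation of the $u^TB_{q,2}^Tz$ cross terms, drops the surplus $2\beta^{-1}\|z\|_2^2$, and integrates/sums using $X_c(0)=0$. The only differences (completing the square instead of the block quadratic form in continuous time, and substituting $X_c=X_o+\begin{bmatrix}2\hat{x}\\ 0\end{bmatrix}$ rather than the shifted identity in discrete time) are cosmetic.
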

then Lemma~\ref{l1} follows.

\begin{proof}[Proof of Lemma \ref{l2}]
We split the proof of Lemma \ref{l2} into two parts: one for the
continuous-time case, and one for the discrete-time case.

{\bf Continuous-time}

Notice that by applying Lemma \ref{Claim0} with $A=A_q^T$, $S=B_q^T$
and using the fact that $\Lambda$ is a controllability grammian, it
follows that
\[
\begin{bmatrix}
A^T_{q}\Lambda^{-1}+\Lambda^{-1}A_q & \Lambda^{-1}B_q  \\
B_q^T\Lambda^{-1} & -I
\end{bmatrix} \le 0.
\]
Hence,
\[
\begin{split}
\frac{d}{dt}& (X_c(t)^T\Lambda^{-1} X_c(t))\\
&= \begin{bmatrix} X_c(t) \\ 2u(t)
\end{bmatrix}^T
\begin{bmatrix}
A^T_{q(t)}\Lambda^{-1}+\Lambda^{-1}A_{q(t)} & \Lambda^{-1}B_{q(t)} \\
B_{q(t)}^T\Lambda^{-1} & -I
\end{bmatrix}
\begin{bmatrix}
X_c(t) \\ 2u(t)
\end{bmatrix}\\
&\quad + 4||u(t)||_{2}^{2}-2X^T_c(t)\Lambda^{-1}\begin{bmatrix} 0 \\ z(t) \end{bmatrix}\\
&\leq 4||u(t)||_{2}^{2}-2X^T_c(t)\Lambda^{-1}\begin{bmatrix} 0 \\
z(t) \end{bmatrix}
\end{split}
\]
A simple computation reveals that $X^T_c(t)\Lambda^{-1}\begin{bmatrix}
0 \\ z(t) \end{bmatrix}=\frac{1}{\beta} x^T_2(t)z(t)$, and thus
\[ \frac{d}{dt} (X^T_c(t)\Lambda^{-1}X_c(t)) \le
4||u(t)||_{2}^{2}-2\frac{1}{\beta}x_2^T(t)z(t)
\]
By noticing that $X_c(0)=0$, we get that
\[
\begin{split}
X_c(t)^T\Lambda^{-1}X_c(t) &=
\int_{0}^{t} \frac{d}{ds} (X_c(s)^T\Lambda^{-1}X_c(s))ds \\
&\leq 4\int_{0}^{t} ||u(s)||^2ds-2 \int_{0}^{t} \frac{1}\beta
x_{2}^T(s)z(s)ds
\end{split}
\]
Since $X_c(t)^T\Lambda^{-1}X_c(t) \ge 0$,
\begin{equation}
\label{ClaimBalance8.1:5CC}
2\int_{0}^{t} z(s)^Tx_2(s)ds
\le 4\beta \int_0^{t} ||u(s)||_{2}^{2}ds \le 4\beta ||u||_{2}^{2}
\end{equation}

\textbf{Discrete-time}

By applying Lemma \ref{Claim0} with $A=A_q^T$ and $S=B_q^T$ for the
discrete-time case, equation~\eqref{ContContrGram} for $\CP=\Lambda$
can be rewritten as
\begin{align}
&\begin{bmatrix} A_q^T\Lambda^{-1}A_q-\Lambda^{-1} & A_q^T\Lambda^{-1}B_q \\
B_q^T\Lambda^{-1}A_q & B^T_q\Lambda^{-1}B_{q} -I
\end{bmatrix} \le 0.\label{ClaimBalance8.1:3DD}
\end{align}
Hence, by using \eqref{ClaimBalance8.1:3DD},
\begin{equation}
\label{ClaimBalance8.1:41DD}
\begin{split}
X_c  &(t+1)^T \Lambda^{-1}X_c(t+1) - X^T_c(t)\Lambda^{-1}X_c(t)\\
&=\begin{bmatrix} X_c(t) \\ 2u(t) \end{bmatrix}^T
\begin{bmatrix} A_q^T\Lambda^{-1}A_q-\Lambda^{-1} & A_q^T\Lambda^{-1}B_q \\
B_q^T\Lambda^{-1}A_q & B^T_q\Lambda^{-1}B_{q} -I
\end{bmatrix}
\begin{bmatrix} X_c(t) \\ 2u(t) \end{bmatrix}\\
&\quad - 2(A_{q(t)}X_c(t)+2B_{q(t)}u(t))^T\Lambda^{-1}\begin{bmatrix} 0 \\
z(t)  \end{bmatrix}\\ 
&\quad+ z^T(t)\Lambda^{-1}z(t) + 4||u(t)||^{2}_2\\ 
&\le
-2(A_{q(t)}X_c(t)+2B_{q(t)}u(t))^T\Lambda^{-1}\begin{bmatrix} 0 \\ z(t)  \end{bmatrix}\\
&\quad + z^T(t)\Lambda^{-1}z(t)+4||u(t)||^{2}_2
\end{split}
\end{equation}
Notice that
\[
\begin{split}
\begin{bmatrix} 2\hat{x}(t+1) \\ 0 \end{bmatrix} &= X_{c}(t+1)-X_{o}(t+1)\\
& =A_{q(t)}X_c(t)+2B_{q(t)}u(t)\\
&\quad -A_{q(t)}X_o(t)-2\begin{bmatrix} 0 \\
z(t) \end{bmatrix},
\end{split}
\]
from which it follows that
\begin{equation}
\label{ClaimBalance8.1:42DD}
A_{q(t)}X_c(t)+2B_{q(t)}u(t)=A_{q(t)}X_o(t)+2\begin{bmatrix} \hat{x}(t+1) \\ z(t) \end{bmatrix}.
\end{equation}
By substituting \eqref{ClaimBalance8.1:42DD} into
\eqref{ClaimBalance8.1:41DD} and using that
\[
z^T(t)\Lambda^{-1}z(t)=\frac{1}{\beta} ||z(t)||^2_2
\]
and 
\begin{align*}
(A_{q(t)}X_c(t)+&2B_{q(t)}u(t))^T\Lambda^{-1}\begin{bmatrix} 0 \\
z(t) \end{bmatrix}\\
&=\frac{1}{\beta}
(A_{q(t)}X_c(t)+2B_{q(t)}u(t))^T\begin{bmatrix} 0 \\
z(t) \end{bmatrix}, 
\end{align*}
it follows that
\begin{equation}
\label{ClaimBalance8.1:43DD}
\begin{split}
X_c(t&+1)^T\Lambda^{-1}X_c(t+1) - X^T_c(t)\Lambda^{-1}X_c(t)\\
&\leq -2\frac{1}{\beta}X_o(t)^TA_{q(t)}^T\begin{bmatrix} 0 \\
z(t) \end{bmatrix} + 4||u(t)||_{2}^{2}-\frac{3}{\beta}||z(t)||^{2}\\
&\leq  -2\frac{1}{\beta}X_o(t)^TA^T_{q(t)}\begin{bmatrix} 0 \\
z(t) \end{bmatrix} - \frac{1}{\beta}||z(t)||^{2} + 4||u(t)||_{2}^{2}.
\end{split}
\end{equation}
Since $X_c(t)^T\Lambda^{-1}X_c(t) \ge 0$ and $X_c(0)=0$,
\eqref{ClaimBalance8.1:43DD} can be rewritten as
\begin{equation*}
\begin{split}
0 &\le X_c(t)^T\Lambda^{-1}X_c(t)\\
& =\sum_{s=0}^{t-1} (X_c(s+1)^T\Lambda^{-1}X_c(s+1)-X_c(s)^T\Lambda^{-1}X_c(s)) \\
& \le \frac{1}{\beta} (\sum_{s=0}^{t-1} -2X_o(s)^TA^T_{q(s)}\begin{bmatrix}
0 \\ z(s) \end{bmatrix} -||z(s)||_{2}^{2})+4||u||_{2}^{2}.
\end{split}
\end{equation*}
From which it follows that
\begin{equation}
\label{ClaimBalance8.1:5DD}
\sum_{s=0}^{t-1} (2X_o(s)^TA_{q(s)}\begin{bmatrix} 0 \\ z(s) \end{bmatrix} +||z(s)||_{2}^{2}) 
\le 4\beta ||u||_{2}^{2}
\end{equation}
\end{proof}
\end{proof}

\section{}\label{appl10}
\begin{proof}[Proof of Lemma \ref{beck:theo}]
  In order to present the proof, we will use the following auxiliary result.
  Using the terminology of \cite{BeckThesis}, define
  \[
    \begin{split}
     & A=
     \begin{bmatrix} 0 & A_1 & \ldots & A_{\QNUM}  \\
     I & 0 & \ldots & 0 \\
    \vdots & \vdots & \ldots  &  \vdots  \\ \
     I & 0 & \ldots & 0 \\
    \end{bmatrix} \mbox{,\ \ }
 B=\begin{bmatrix}
 B_1 & \ldots & B_{\QNUM}\\
 0 & \ldots & 0 \\
 \vdots & \ldots & \vdots \\
 0 & \ldots & 0  \\
 \end{bmatrix} \\
 & C=\begin{bmatrix}
0 & C_1 & \ldots & C_{\QNUM}
\end{bmatrix}.
\end{split}
\]
It then follows that 
\( M_{\Sigma}=  
\left[
\begin{array}{c|c}
A &B \\ \hline
C &0
\end{array}
\right]
\). 
Denote by $\mathcal{T}$ the set of all matrices of the form
$\mathrm{diag}(S_1,\ldots,S_{\QNUM+1})$ such that $S_{i}$ are $n \times n$ matrices.
Denote by $\mathcal{T}^{+}$ the subset of all the matrices 
$\mathrm{diag}(S_1,\ldots,S_{\QNUM+1})$ such that $S_{i}$ are $n \times n$ positive
definite matrices. In order to define grammians and quadratic stability,
in \cite{BeckThesis} expression of the following form were investigated
\( A^{T}XA - X \) and \( AYA^T-Y \) for $X,Y \in \mathcal{T}^{+}$.
An easy calculation leads to the following proposition.
\begin{Proposition}
\label{beck:theo:prop1}
 Assume that $X=\mathrm{diag}(S_1,\ldots,S_{\QNUM+1}) \in \mathcal{T}^{+}$.
 The $i$th $n \times n$ diagonal block of $A^TXA-X$ is of the form
 \( A^T_{i}S_1A_{i}-S_{i} \) for $i=2,\ldots,\QNUM$ and
 it is of the form \(\sum_{i=2}^{\QNUM+1} S_i - S_1\) for $i=1$.
 Similarly, the first $n \times n$ diagonal block of 
 \( AXA^T-X \) equals $\sum_{q \in Q} A_qX_{q+1}A_q^T-X_1$ and the
  $i$th diagonal $n \times n$ block equals $P_1$ for $i=2,\ldots,\QNUM+1$.
\end{Proposition}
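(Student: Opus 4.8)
The plan is to prove the two block identities by a direct computation, exploiting the extreme sparsity of $A$. Regard $A$ as a $(\QNUM+1)\times(\QNUM+1)$ array of $n\times n$ blocks, with block rows and columns indexed by $1,\dots,\QNUM+1$. By construction the only nonzero blocks of $A$ are $A_{1,j+1}=A_j$ for $j=1,\dots,\QNUM$ (in the first block row) and $A_{i,1}=I$ for $i=2,\dots,\QNUM+1$ (in the first block column); every other block is zero. Since $X=\diag(S_1,\dots,S_{\QNUM+1})$ is block diagonal, the $(i,j)$ block of $XA$ is just $S_iA_{i,j}$; hence $XA$ has first block row $(0,S_1A_1,\dots,S_1A_{\QNUM})$, first block column entries $(XA)_{i,1}=S_i$ for $i\ge 2$, and all remaining blocks zero. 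Likewise $(XA^{T})_{i,j}=S_i(A_{j,i})^{T}$.

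Next I would read off the diagonal blocks. From $(A^{T}XA)_{i,i}=\sum_{k}(A_{k,i})^{T}(XA)_{k,i}$: for $i=1$ only the terms with $k\ge 2$ survive (as $A_{1,1}=0$), each contributing $I^{T}S_k=S_k$, so $(A^{T}XA)_{1,1}=\sum_{k=2}^{\QNUM+1}S_k$ and subtracting the $(1,1)$ block $S_1$ of $X$ gives $\sum_{k=2}^{\QNUM+1}S_k-S_1$; for $i\ge 2$ only the term $k=1$ survives, contributing $(A_{1,i})^{T}(XA)_{1,i}=A_{i-1}^{T}S_1A_{i-1}$, so $(A^{T}XA-X)_{i,i}=A_{i-1}^{T}S_1A_{i-1}-S_i$, which is the stated form once one identifies the block position $i\in\{2,\dots,\QNUM+1\}$ with the discrete mode $i-1\in Q$. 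The identity for $AXA^{T}-X$ is entirely parallel (equivalently, apply the first computation with $A^{T}$ in place of $A$, whose nonzero blocks sit in the transposed positions): from $(AXA^{T})_{i,i}=\sum_{k}A_{i,k}(XA^{T})_{k,i}$ one gets, for $i=1$, the surviving terms $k\ge 2$ summing to $\sum_{k=2}^{\QNUM+1}A_{k-1}S_kA_{k-1}^{T}=\sum_{q\in Q}A_qS_{q+1}A_q^{T}$ (setting $q=k-1$), and for $i\ge 2$ only $k=1$ survives, giving $(AXA^{T})_{i,i}=S_1$; subtracting $X$ yields the claimed diagonal blocks.

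There is no conceptual content in any of this: the identities are forced by the zero/identity pattern of $A$, and everything reduces to careful block bookkeeping. The one place where an error could slip in is the off-by-one shift between a block position $i$ and the mode $q=i-1$ it encodes --- this is exactly the ``main obstacle'' --- together with the harmless notational identification of the paper's $X_{q+1}$ and $X_1$ with the blocks $S_{q+1}$ and $S_1$ of $X$. I would therefore present the result simply as the explicit computation sketched above rather than invoking any machinery.
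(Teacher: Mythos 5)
Your block-by-block computation is correct and is precisely the ``easy calculation'' the paper invokes without writing out, so you take essentially the same (and only) route; you also rightly resolve the statement's indexing sloppiness (block position $i$ versus mode $q=i-1$, and the $i$th diagonal block of $AXA^{T}-X$ for $i\ge 2$ being $S_1-S_i$ rather than the literal ``$P_1$''), which is exactly how the proposition is used later in the proof of Lemma~\ref{beck:theo}.
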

Based on Proposition \ref{beck:theo:prop1} we can show the following.

\textbf{$M_{\Sigma}$ is quadratically stable $\implies$ $\Sigma$ strongly stable}
  If $M_{\Sigma}$ is quadratically stable, then for some $X \in \mathcal{T}^{+}$,
  $A^{T}XA-X < 0$. By Proposition \ref{beck:theo:prop1} the diagonal elements of
  $A^TXA-X$ are $\sum_{q=2}^{\QNUM} S_{q+1} - S_1$ and
  $A^T_qS_1A_q - S_{q+1}$, $q \in Q$. If $A^TXA-X$ is negative definite, then so are its
  diagonal elements and hence $\sum_{q=2}^{\QNUM} S_{q+1} - S_1 < 0$ and
  $A^T_qS_1A_q - S_{q+1} < 0$, $q \in Q$. Hence,
  $\sum_{q \in Q} A_q^TS_1A_q - \sum_{q \in Q} S_{q+1} < 0$. Using
  $S_1 > \sum_{q \in Q} S_{q+1}$, it follows that
  $\sum_{q \in Q} A_q^TS_1A_q - S_1 < 0$. Then from Lemma \ref{nice_stab:proof} it follows
  that $\Sigma$ is strongly stable.

\textbf{If $\CP$ and $\Q$ are the controllability and observability grammians of
        $M_{\Sigma}$, then $\CP$ and $\Q$ are controllability and
        observability grammians of $\Sigma$ and \eqref{beck:theo:eq1} holds.}

  If $\CP$ and $\Q$ (more precisely, $P_1$ and $Q_1$) 
  satisfy \eqref{beck:theo:eq1}, then they are clearly
  controllability and observability grammians of $\Sigma$.
  Hence, it is enough to show that \eqref{beck:theo:eq1} holds. To this end, recall
  that if 
  $\CP$ and $\Q$ are controllability (resp. observability) grammians of $M_{\Sigma}$,
  then
  \( A^{T}\Q A + C^TC-\Q \le 0 \) and \( A\CP A^T + BB^T - \CP \le 0 \).
  From Proposition \ref{beck:theo:prop1} it then follows that
  the diagonal $n \times n$ blocks of $A^T\Q A+C^TC-\Q$
  are of the form 
  $\sum_{q \in Q} Q_{q+1} - Q_1$, $A^T_{q}Q_1A_q +C_qC_q^T- Q_{q+1}$.
 Hence, $\sum_{q \in Q} Q_{q+1} - Q_1 \le 0$, $A^T_{q}Q_1A_q +C^T_qC_q- Q_{q+1} \le 0$.
 By taking the sums of $A_{q}^TQ_1A_{q}-Q_{q+1}$ and 
 by taking into account $\sum_{q \in Q} Q_{q+1} - Q_1 \le 0$, it follows that
 \( \sum_{q \in Q} A_q^TQ_1 A_q+C_q^TC_q - Q_1  \le 0\).
  From Proposition \ref{beck:theo:prop1} it then follows that
  the first $n \times n$ diagonal block of $A\CP A^T + BB^T - \CP$ equals 
  $\sum_{q \in Q} (A_qP_{q+1}A_q^T+B_qB^T_q) - P_1$,  and all the other diagonal
  blocks are $P_1-P_{q+1}$, $q \in Q$. Hence, $P_1 \le P_{q+1}$ and 
  $\sum_{q \in Q} (A_qP_{q+1}A_q^T+B_qB^T_q) - P_1 \le 0$. Since
  then $A_{q}P_1A_q^T < A_{q}P_{q+1}A_{q}^T$, it follows that
  \( \sum_{q \in Q} (A_qP_{1}A_q^T+B_qB^T_q) - P_1 \le 0 \) holds.
   
  \textbf{$M_{\Sigma}$ is minimal $\iff$ $\Sigma$ is minimal} By
  \cite{BeckThesis}, $M_{\Sigma}$ is minimal if and only if it is
  reachable and observable.  The latter conditions are equivalent to
  $\mathbb{R}^{n}=\sum_{k=0}^{\infty}
  \sum_{i_1,\ldots,i_k=1}^{\QNUM+1} \IM A_{i,i_{k}}\cdots
  A_{i_2,i_1}G_{i_1}$ and $\{0\}=\bigcap_{k=0}^{\infty}
  \bigcap_{i_1,\ldots,i_k=1}^{\QNUM+1}$ $ \ker
  C_{i_k}A_{i_{k},i_{k-1}}\cdots A_{i_1,i}$ for all $i=1,\ldots,
  \QNUM$.  Here $A_{1,q+1}=A_q$, $A_{q+1,1}=I_n$, $q \in Q$, and
  $A_{i,j}=0$ otherwise, Similarly, $C_1=0$ and $C_{q}=C_{q-1}$ for $q
  > 1$.  Finally $B=\begin{bmatrix} G^T_1 & \ldots &
  G^T_{\QNUM+1} \end{bmatrix}^T$ and thus $G_1=\begin{bmatrix} B_1 &
  \ldots & B_{\QNUM} \end{bmatrix}$ and $G_{q}=0$ for $q > 1$.  It
  then follows that $A_{i,i_{k}}\cdots A_{i_2,i_1}G_{i_1}=A_{v}G_1$,
  if $v=q_1\cdots q_l$ and $i_1\cdots i_k=1(q_1+1)1(q_2+1)\cdots
  1(q_l+1)1$ or $i_1\cdots i_k=1(q_1+1)1(q_2+1)\cdots 1(q_l+1)1(q+1)$ for some $q \in Q$, and
  $A_{i,i_{k}}\cdots A_{i_2,i_1}G_{i_1}=0$ otherwise.  Similarly,
  $C_{i_k}A_{i_{k},i_{k-1}}\cdots A_{i_1,i}=C_{q_l}A_{v}$, if
  $vq_l=q_1\cdots q_l$, $i_1\cdots i_k=(q_1+1)1(q_2+1) \cdots (q_{l-1}+1)1(q_l+1)$ or
  and it is zero otherwise.
  Hence, by Remark \ref{lin:rem1.5} reachability of $M_{\Sigma}$ is
  equivalent to span-reachability of $\Sigma$ and observability of
  $M_{\Sigma}$ is equivalent to observability of $\Sigma$.
  
\textbf{$M_{\Sigma}$ is balanced $\implies$ $\Sigma$ is balanced}
  Assume that $\CP=\Q$ diagonal, then the first $n \times n$ block of 
  $\CP=\Q$ is also diagonal and it is an observability and reachability grammian of
  $\Sigma$.  That is, $\Sigma$ is balanced.

\textbf{$M_{\hat{\Sigma}}$ arises from balanced truncation}
  Assume that  $\CP=\Q=\mathrm{diag}(\Lambda_1,..$ $..,\Lambda_{\QNUM+1})$ and
  assume that in Procedure \ref{BalancedTruncate} we discard the 
  $n-r$ smallest singular values of $\Lambda_1$.  Let us apply balanced truncation to 
  $M_{\Sigma}$ by discarding the $n-r$ smallest singular values from
  $\Lambda_1,\ldots,\Lambda_{\QNUM+1}$.
  From the formula presented in \cite{BeckThesis} it then follows that
  the resulting uncertain system equals $M_{\hat{\Sigma}}$. 
\end{proof}

\section{}\label{appl11}

\begin{proof}[Proof of Lemma \ref{kotsalis:theo}]
 \textbf{Proof of Part \ref{kotsalis:theo:part1}}
  Notice that $\sum_{q \in Q} A_q^TPA_q - P < 0$ is equivalent to
  $\sum_{q \in Q} p ((\frac{1}{p} A^T_qPA_q) - P < 0$. The existence of a
  positive definite solution to former LMI is 
  equivalent strong stability of $\Sigma$, and the latter LMI is equivalent to 
  mean-square stability of $\Sigma_{\mathrm{st}}$.


 \textbf{Proof of Part \ref{kotsalis:theo:part2}}
   Notice that  
  \begin{align*} \sum_{q \in Q} (A_{q}\CP A^T_q&+B_qB_q^T)\\
      &=\sum_{q \in Q} p((\frac{1}{\sqrt{p}}A_{q})\CP (\frac{1}{\sqrt{p}}A^T_q)+
     (\frac{1}{\sqrt{p}}B_q)(\frac{1}{\sqrt{p}}B_q)^T)
  \end{align*} and 
  \begin{align*} \sum_{q \in Q} (A^T_{q}\Q A_q&+C^T_qC_q)\\
  &= \sum_{q \in Q} p((\frac{1}{\sqrt{p}}A^T_{q})\Q
  (\frac{1}{\sqrt{p}}A_q)+
  (\frac{1}{\sqrt{p}}C^T_q)(\frac{1}{\sqrt{p}}C_q)).
  \end{align*}
  From this it the first part of the claim follows.
  Since clearly
  $A_q\CP A_q^T+B_qB_q^T - \CP \le \sum_{q \in Q} (A_{q}\CP A^T_q+B_qB_q^T) -\CP$ and
  $A^T_q\Q A_q+C^T_qC_q - \Q \le \sum_{q \in Q} (A^T_{q}\Q A_q+C^T_qC_q) -\Q$, it follows
  that if $\CP$ and $\Q$ satisfy \eqref{kotsalis:theo:eq1}, then 
  they are controllability resp. observability of $\Sigma$.

  As for the second part of the claim, if $\CP$ and $\Q$ are nice controllability
  resp. observability grammians, then 
  they satisfy  \eqref{kotsalis:theo:eq1} and hence they are also controllability
  resp. observability grammians of $\Sigma_{\mathrm{st}}$.

\textbf{Proof of Part \ref{kotsalis:theo:part3}} 
  Assume that we apply Procedure \ref{BalancedTruncate} to grammians $\CP$ and $\Q$
  which satisfy \eqref{kotsalis:theo:eq1}. Let $\MORPH$ be the state-space isomorphism
  which renders $\Sigma$ balanced. Denote the resulting balanced \LSS\ by 
  $\bar{\Sigma}$.
   Since $\Sigma_{\mathrm{st}}$ has the same state-space
  as $\Sigma$, we can apply the state-space transformation to obtain a 
  jump-linear system
  \[
  \bar{\Sigma}_{\mathrm{st}}=\left\{
  \begin{split}
   & z(t+1)=\bar{A}_{\theta(t)}z(t)+\bar{B}_{\theta(t)}u(t) \\
   & \widetilde{y}(t)=\bar{C}_{\theta(t)}z(t)
   \end{split}\right.
  \]
  where $\bar{A}_q=\frac{1}{\sqrt{p}}\MORPH A_q \MORPH^{-1}$,  
  $\bar{B}_q=\frac{1}{\sqrt{p}} \MORPH B_q$, $\bar{C}_q=\frac{1}{\sqrt{p}}C_q\MORPH^{-1}$. It is then easy to see
  that the matrices $(\MORPH^{-1})^T\Q\MORPH^{-1}=\MORPH \CP \MORPH^{T}=\Lambda$ are 
  equal and diagonal and satisfy \eqref{kotsalis:theo:eq1} with
  $A_q$, $B_q$ and $C_q$ being replaced by
  $\MORPH A_q \MORPH^{-1}, \MORPH B_q$ and $C_q \MORPH^{-1}$, $q \in Q$ respectively.
  Hence they are 
  also grammians of $\bar{\Sigma}_{\mathrm{st}}$, i.e. $\bar{\Sigma}_{\mathrm{st}}$ is
  balanced according to \cite{Kotsalis1}. Finally, if
  $\Lambda=\mathrm{diag}(\sigma_1,\ldots,\sigma_n)$, $\sigma_1 \ge \cdots \sigma_n$
  and we truncate the singular values $\sigma_{r+1},\ldots,\sigma_n$, then
  Procedure \ref{BalancedTruncate} returns the system
  $\hat{\Sigma}=(n,Q, \{\hat{A}_q,\hat{B}_q,\hat{C}_q\}_{q \in Q})$, where
  $\hat{A}_{q}$ the is upper left $r \times r$ block of $\MORPH A_q \MORPH^{-1}$,
  $\hat{B}_q$ is formed by the first $r$ rows of $\MORPH B_q$, and
  $\hat{C}_q$ is formed by the first $r$ columns of $C_q\MORPH^{-1}$.
  But then the stochastic system 
  \[
  \hat{\Sigma}_{\mathrm{st}}=\left\{
  \begin{split}
   & \bar{z}(t+1)=\frac{1}{\sqrt{p}}\hat{A}_{\theta(t)}\bar{z}(t)+\frac{1}{\sqrt{p}}\bar{B}_{\theta(t)}u(t) \\
   & \widetilde{y}(t)=\frac{1}{\sqrt{p}}\bar{C}_{\theta(t)}\bar{z}(t)
   \end{split}\right.
  \]
  is easily seen to coincide with the result of applying balanced truncation to 
  $\bar{\Sigma}_{\mathrm{st}}$.

\textbf{Proof of Part \ref{kotsalis:theo:part4}} 
  With a slight abuse of notation, for $u \in \mathcal{U}$, $v \in Q^{+}$,
  we will denote by $Y(u,v)$ the value
  $Y^{\Sigma}(u,q)(t)$, where $t=|v|-1$ and the switching signal $q$ such that
  $q(0)\cdots q(t)=v$. Note that due to the definition of $Y^{\Sigma}$, 
  the value of $Y^{\Sigma}(u,q)(t)$ does not depend on the choice of $q(l)$, $l > t$.
  Hence, $Y(u,v)$ is well-defined. Define the random variable
  \[ \chi(\theta=v)(\omega)=\left\{\begin{array}{rl}
                      1 & \theta(0)(\omega) \cdots \theta(t)(\omega)=v  \\
                      0 & \mbox{otherwise}.
                     \end{array}\right.
  \]
  It is then easy to see that the output process $\widetilde{y}_t$ of $\Sigma_{\mathrm{st}}$ satisfies
  \[ \widetilde{y}(t) = \sum_{v \in Q^{+}, |v| = t} \frac{1}{(\sqrt{p})^t}Y(u,v)\chi(\theta=v).
  \]
  From this, by noticing that $P(\chi(\theta=v))=p^{t}$ it follows that 
  \[
     E[\widetilde{y}^T(t)\widetilde{y}(t)]=\sum_{v \in Q^{+}, |v|} ||Y(u,v)||_{2}^{2} \ge ||Y(u,v)||^{2}.
  \]
  Hence, for any $(u,q) \in \mathcal{U} \times \mathcal{Q}$, 
  \(
     E[\widetilde{y}^T(t)\widetilde{y}(t)] \ge ||Y^{\Sigma}(u,q)(t)||_{2}^{2}
  \) and thus
  \[
     ||Y^{\Sigma}(u,v)||_{2}^{2} \le \sum_{t=0}^{\infty} E[\widetilde{y}^T(t)\widetilde{y}(t)] \le \gamma^{2} ||u||_{2}^{2},
   \]
   from which the statement follows.  
\end{proof}


\end{document}